\DeclareMathAlphabet{\mathpzc}{OT1}{pzc}{m}{it}
\theoremstyle{plain}
\newtheorem{theorem}{Theorem}[section]
\newtheorem{lemma}[theorem]{Lemma}
\newtheorem{proposition}[theorem]{Proposition}
\newtheorem{corollary}[theorem]{Corollary}
\theoremstyle{definition}
\newtheorem{example}[theorem]{Example}
\theoremstyle{remark}
\newtheorem{remark}[theorem]{Remark}
\newenvironment{eqcond}{\begin{enumerate}\renewcommand{\theenumi}{\roman{enumi}}}{\end{enumerate}}
\renewcommand{\theenumi}{\arabic{enumi}}
\newcommand{\Rw}{\Rightarrow}
\newcommand{\hrw}{\hookrightarrow}
\newcommand{\RLw}{\Leftrightarrow}
\newcommand{\fra}{\mathfrak{a}}
\newcommand{\frf}{\mathfrak{f}}
\newcommand{\frp}{\mathfrak{p}}
\newcommand{\fru}{\mathfrak{u}}
\newcommand{\frw}{\mathfrak{w}}
\newcommand{\frv}{\mathfrak{v}}
\newcommand{\frx}{\mathfrak{x}}
\newcommand{\fry}{\mathfrak{y}}
\newcommand{\calA}{\mathcal{A}}
\newcommand{\calO}{\mathcal{O}}
\newcommand{\calS}{\mathcal{S}}
\newcommand{\frX}{\mathfrak{X}}
\DeclareMathOperator{\yoneda}{\mathpzc{y}}
\DeclareMathOperator{\yonedaOP}{\mathpzc{h}}
\DeclareMathOperator{\yonedaT}{\mathpzc{Y}}
\DeclareMathOperator{\yonmult}{\mathpzc{m}}
\DeclareMathOperator{\formalball}{\mathpzc{B}}
\DeclareMathOperator{\fammod}{\mathpzc{F}}
\DeclareMathOperator{\Sup}{Sup}
\DeclareMathOperator{\Bimorph}{Bimorph}
\DeclareMathOperator{\ev}{ev}
\DeclareMathOperator{\comp}{comp}
\newcommand{\mate}[1]{\,^\ulcorner\! #1^\urcorner}
\newcommand{\fspstr}[2]{\llbracket #1,#2\rrbracket}
\newcommand{\catfont}[1]{\mathsf{#1}}
\newcommand{\SET}{\catfont{Set}}
\newcommand{\ORD}{\catfont{Ord}}
\newcommand{\MET}{\catfont{Met}}
\newcommand{\COCTSAP}{\catfont{InjApp}_\mathrm{sup}}
\newcommand{\TOP}{\catfont{Top}}
\newcommand{\PSTOP}{\catfont{PsTop}}
\newcommand{\AP}{\catfont{App}}
\newcommand{\PSAP}{\catfont{PsApp}}
\newcommand{\SUP}{\catfont{Sup}}
\newcommand{\CONTLAT}{\catfont{ContLat}}
\newcommand{\OPCONTLAT}{\catfont{ContLat_*}}
\newcommand{\ORDCH}{\catfont{OrdCompHaus}}
\newcommand{\METCH}{\catfont{MetCompHaus}}
\newcommand{\two}{\catfont{2}}
\newcommand{\kto}{\relbar\joinrel\rightharpoonup}
\newcommand{\kmodto}{\,{\kto\hspace*{-2.8ex}{\circ}\hspace*{1.3ex}}}
\newcommand{\monadfont}[1]{\mathbbm{#1}}
\newcommand{\mU}{\monadfont{U}}
\newcommand{\mP}{\monadfont{P}}
\newcommand{\umonad}{(U,e,m)}
\newcommand{\pmonad}{(P,\yoneda,\yonmult)}
\newcommand{\doo}[1]{\overset{\centerdot}{#1}}
\newcommand{\eps}{\varepsilon}
\newcommand{\op}{\mathrm{op}}
\newcommand{\sep}{\mathrm{sep}}
\newcommand{\true}{\mathsf{true}}
\newcommand{\false}{\mathsf{false}}
\newcommand{\trunminus}{\ominus}
\newcommand{\field}[1]{\mathds{#1}}
\newcommand{\N}{\field{N}}
\begin{document}
\newcounter{counter}
\title{Approaching metric domains}

\author{Gon\c{c}alo Gutierres}
\address{CMUC, Department of Mathematics, University of Coimbra, 3001-454
Coimbra, Portugal}
\email{ggutc@mat.uc.pt}

\author{Dirk Hofmann}
\address{CIDMA, Department of Mathematics, University of Aveiro, 3810-193 Aveiro, Portugal}
\email{dirk@ua.pt}

\thanks{Partial financial assistance by Centro de Matem\'{a}tica da Universidade de Coimbra/FCT, Centro de Investiga\c{c}\~ao e Desenvolvimento em Matem\'atica e Aplica\c{c}\~oes da Universidade de Aveiro/FCT and the project MONDRIAN (under the contract PTDC/EIA-CCO/108302/2008) is gratefully acknowledged.}

\subjclass[2010]{54A05, 54A20, 54E35, 54B30, 18B35}
\keywords{Continuous lattice, metric space, approach space, injective space, cocomplete space}

\begin{abstract}
In analogy to the situation for continuous lattices which were introduced by Dana Scott as precisely the injective T$_0$ spaces via the (nowadays called) Scott topology, we study those metric spaces which correspond to injective T$_0$ approach spaces and characterise them as precisely the continuous lattices equipped with an unitary and associative $[0,\infty]$-action. This result is achieved by a thorough analysis of the notion of cocompleteness for approach spaces.
\end{abstract}


\maketitle

\section*{Introduction}

Domain theory is generally concerned with the study of \emph{ordered sets} admitting certain (typically up-directed) suprema and a notion of approximation, here the latter amounts to saying that each element is a (up-directed) supremum of suitably defined ``finite'' elements. From a different perspective, domains can be viewed as very particular \emph{topological spaces}; in fact, in his pioneering paper \citep{Sco_ContLat} Scott introduced the notion of continuous lattice precisely as injective topological T$_0$ space. Yet another point of view was added in \citep{Day_Filter,Wyl85} where continuous lattices are shown to be precisely the \emph{algebras} for the filter monad. Furthermore, suitable submonads of the filter monad have other types of domains as algebras (for instance, continuous Scott domains \citep{Book_ContLat} or Nachbin's ordered compact Hausdorff spaces \citep{Nach_TopOrd}), and, as for continuous lattices, these domains can be equally seen as objects of topology and of order theory. This interplay between topology and algebra is very nicely explained in \citep{EF_SemDom} where, employing a particular property of monads of filters, the authors obtain ``new proofs and [\ldots] new characterizations of semantic domains and topological spaces by injectivity''.

Since \citeauthor{Law_MetLogClo}'s ground-breaking paper \citep{Law_MetLogClo} it is known that an individual metric spaces $X$ can be viewed as a category with objects the points of $X$, and the distance
\[
d(x,y)\in[0,\infty]
\]
plays the role of the ``hom-set'' of $x$ and $y$. More modest, one can think of a metric $d:X\times X\to[0,\infty]$ as an order relation on $X$ with truth-values in $[0,\infty]$ rather than in the Boolean algebra $\two=\{\false,\true\}$. In fact, writing $0$ instead of $\true$, $\geqslant$ instead of $\Rw$ and additon $+$ instead of and $\&$, the reflexivity and transitivity laws of an ordered set become
\begin{align*}
0\ge d(x,x) &&\text{and} && d(x,y)+d(y,z)\ge d(x,z) &&(x,y,x\in X),
\end{align*}
and in this paper we follow Lawvere's point of view and assume no further properties of $d$. As pointed out in \citep{Law_MetLogClo}, ``this connection is more fruitful than a mere analogy, because it provides a sequence of mathematical theorems, so that enriched category theory can suggest new directions of research in metric space theory and conversely''. A striking example of commonality between category (resp.\ order) theory and metric theory was already given in \citep{Law_MetLogClo} where it is shown that Cauchy sequences correspond to adjoint (bi)modules and convergence of Cauchy sequences corresponds to representabilty of these modules. Eventually, this amounts to saying that a metric space is Cauchy complete if and only if it admits ``suprema'' of certain ``down-sets'' ($=$ morphisms of type $X^\op\to[0,\infty]$), here ``suprema'' has to be taken in the sense of weighted colimit of enriched category theory \citep{EK_CloCat,Kel_EnrCat}. Other types of ``down-sets'' $X^\op\to[0,\infty]$ specify other properties of metric spaces: forward Cauchy sequences (or nets) (see \citep{BBR_GenMet}) can be represented by so called flat modules (see \citep{Vic_LocComplI}) and their limit points as ``suprema'' of these ``down-sets'', and the formal ball model of a metric space relates to its cocompletion with respect to yet another type of ``down-sets'' (see \citep{Rut98a,KW_FormBall}).

The particular concern of this paper is to contribute to the development of metric domain theory. Due to the many facets of domains, this can be pursued by either
\begin{enumerate}
\item formulation order-theoretic concepts in the logic of $[0,\infty]$, 
\item considering injective ``$[0,\infty]$-enriched topological spaces'', or
\item studying the algebras of ``metric filter monads''. 
\end{enumerate}
Inspired by \citep{Law_MetLogClo}, there is a rich literature employing the first point of view, including \citeauthor{Wag_PhD}'s Ph.D.\ thesis \citep{Wag_PhD}, the work of the Amsterdam research group at CWI \citep{BBR_GenMet,Rut98a}, the work of \citeauthor{FSW_QDT} on continuity spaces \citep{Kop88,Flagg_ComplCont,Flagg_QuantCont,FSW_QDT,FK_ContSp}, and the work of \citeauthor{Was_DomGirQant} with various coauthors on approximation and the formal ball model \citep{Was_DomGirQant,HW_AppVCat} and \citep{KW_FormBall}. However, in this paper we take a different approach and concentrate on the second and third aspect above. Our aim is to connect the theory of metric spaces with the theory of ``$[0,\infty]$-enriched topological spaces'' in a similar fashion as domain theory is supported by topology, where by ``$[0,\infty]$-enriched topological spaces'' we understand \citeauthor{Low_ApBook}'s approach spaces \citep{Low_ApBook}. (In a nutshell, an approach space is to a topological space what a metric space is to an ordered set: it can be defined in terms of ultrafilter convergence where one associates to an ultrafilter $\frx$ and a point $x$ a value of convergence $a(\frx,x)\in[0,\infty]$ rather then just saying that $\frx$ converges to $x$ or not.) This idea was already pursued in \citep{Hof_Cocompl} and \citep{Hof_DualityDistSp} were among others it is shown that
\begin{itemize}
\item injective T$_0$ approach spaces correspond bijectively to a class of metric spaces, henceforth thought of as ``continuous metric spaces'',
\item these ``continuous metric spaces'' are precisely the algebras for a certain monad on $\SET$, henceforth thought of as the ``metric filter monad'',
\item the category of injective approach spaces and approach maps is Cartesian closed.
\end{itemize}
Here we continue this path and
\begin{itemize}
\item recall the theory of metric and approach spaces as generalised orders (resp.\ categories),
\item characterise metric compact Hausdorff spaces as the (suitably defined) stably compact approach spaces, and
\item show that injective T$_0$ approach spaces (aka ``continuous metric spaces'') can be equivalently described as continuous lattices equipped with an unitary and associative action of the continuous lattice $[0,\infty]$. This result is achieved by a thorough analysis of the notion of cocompleteness for approach spaces.
\end{itemize}

\subsubsection*{Warning}

The underlying order of a topological space $X=(X,\calO)$ we define as
\[
 x\le y\quad\text{whenever}\quad \doo{x}\to y
\]
which is equivalent to $\calO(y)\subseteq\calO(x)$, hence it is the \emph{dual of the specialisation order}. As a consequence, the underlying order of an injective T$_0$ topological space is the dual of a continuous lattice; and our results are stated in terms of these op-continuous lattices. We hope this does not create confusion. 

\section{Metric spaces}

\subsection{Preliminaries}\label{Subsect:Prelim}

According to the Introduction, in this paper we consider \emph{metric spaces} in a more general sense: a metric $d:X\times X\to[0,\infty]$ on set $X$ is only required to satisfy 
\begin{align*}
0\geqslant d(x,x) &&\text{and}&& d(x,y)+d(y,z)\geqslant d(x,z).
\end{align*}
For convenience we often also assume $d$ to be \emph{separated} meaning that $d(x,y)=0=d(y,x)$ implies $x=y$ for all $x,y\in X$. With this nomenclature, ``classical'' metric spaces appear now as separated, symmetric ($d(x,y)=d(y,x)$) and finitary ($d(x,y)<\infty$) metric spaces. A map $f:X\to X'$ between metric spaces $X=(X,d)$ and $X'=(X',d')$ is a \emph{metric map} whenever $d(x,y)\geqslant d'(f(x),f(y))$ for all $x,y\in X$. The category of metric spaces and metric maps we denote as $\MET$. To every metric space $X=(X,d)$ one associates its \emph{dual space} $X^\op=(X,d^\circ)$ where $d^\circ(x,y)=d(y,x)$, for all $x,y\in X$. Certainly, the metric $d$ on $X$ is symmetric if and only if $X=X^\op$. Every metric map $f$ between metric spaces $X$ and $Y$ is also a metric  map of type $X^\op\to Y^\op$, hence taking duals is actually a functor $(-)^\op:\MET\to\MET$ which sends $f:X\to Y$ to $f^\op:X^\op\to Y^\op$.

There is a canonical forgetful functor $(-)_p:\MET\to\ORD$: for a metric space $(X,d)$, put
\[
x\le y \text{ whenever } 0\geqslant d(x,y),
\]
and every metric map preserves this order. Also note that $(-)_p:\MET\to\ORD$ has a left adjoint $\ORD\to\MET$ which interprets an order relation $\le$ on $X$ as the metric
\[
 d(x,y)=
\begin{cases}
 0 & \text{if }x\le y,\\
 \infty & \text{else.}
\end{cases}
\]
In particular, if $X$ is a discrete ordered set meaning that the order relation is just the equality relation on $X$, then one obtains the discrete metric on $X$ where $d(x,x)=0$ and $d(x,y)=\infty$ for $x\neq y$.

The induced order of a metric space extends point-wise to metric  maps making $\MET$ an \emph{ordered category}, which enables us to talk about \emph{adjunctions}. Here metric  maps $f:(X,d)\to (X',d')$ and $g:(X',d')\to(X,d)$ form an adjunction, written as $f\dashv g$, if $1_X\le g\cdot f$ and $f\cdot g\le 1_{X'}$. Equivalently, $f\dashv g$ if and only if, for all $x\in X$ and $x'\in X'$,
\[
 d'(f(x),x')=d(x,g(x')).
\]
The formula above explains the costume to call $f$ left adjoint and $g$ right adjoint. We also recall that adjoint maps determine each other meaning that $f\dashv g$ and $f\dashv g'$ imply $g\simeq g'$, and $f\dashv g$ and $f'\dashv g$ imply $f\simeq f'$.

The category $\MET$ is complete and, for instance, the product $X\times Y$ of metric spaces $X=(X,a)$ and $(Y,b)$ is given by the Cartesian product of the sets $X$ and $Y$ equipped with the $\max$-metric
\[
d((x,y),(x',y'))=\max(a(x,x'),b(y,y')).
\]
More interestingly to us is the plus-metric
\[
d'((x,y),(x',y'))=a(x,x')+b(y,y')
\]
on the set $X\times Y$, we write $a\oplus b$ for this metric and denote the resulting metric space as $X\oplus Y$. Note that the underlying order of $X\oplus Y$ is just the product order of $X_p$ and $Y_p$ in $\ORD$. Furthermore, for metric  maps $f:X\to Y$ and $g:X'\to Y'$, the product of $f$ and $g$ gives a metric  map $f\oplus g:X\oplus X'\to Y\oplus Y'$, and we can view $\oplus$ as a functor $\oplus:\MET\times\MET\to\MET$. This operation is better behaved then the product $\times$ in the sense that, for every metric space $X$, the functor $X\oplus-:\MET\to\MET$ has a right adjoint $(-)^X:\MET\to\MET$ sending a metric space $Y=(Y,b)$ to
\begin{align*}
Y^X&=\{h:X\to Y\mid h\text{ in $\MET$}\} &&\text{with distance}\qquad [h,k]=\sup_{x\in X}b(h(x),k(x)),
\end{align*}
and a metric  map $f:Y_1\to Y_2$ to
\[
f^X:Y_1^X\to Y_2^X,\,h\mapsto f\cdot h.
\]
In particular, if $X$ is a discrete space, then $Y^X$ is just the $X$-fold power of $Y$.

In the sequel we will pay particular attention to the metric space $[0,\infty]$, with metric $\mu$ defined by
\[\mu(u,v)=v\trunminus u:=\max\{v-u,0\},\]
for all $u,v\in[0,\infty]$. Then the underlying order on $[0,\infty]$ is the ``greater or equal relation'' $\geqslant$. The ``turning around'' of the natural order of $[0,\infty]$ might look unmotivated at first sight but has its roots in the translation of ``$\false\le\true$'' in $\two$ to ``$\infty\geqslant 0$'' in $[0,\infty]$. We also note that $u+-:[0,\infty]\to[0,\infty]$ is left adjoint to $\mu(u,-):[0,\infty]\to[0,\infty]$ with respect to $\geqslant$ in $[0,\infty]$. However, in the sequel we will usually refer to the natural order $\leqslant$ on $[0,\infty]$ with the effect that some formulas are dual to what one might expect. For instance, the underlying monotone map of a metric  map of type $X^\op\to[0,\infty]$ is of type $X_p\to[0,\infty]$; and when we talk about a supremum ``$\bigvee$'' in the underlying order of a generic metric space, it specialises to taking infimum ``$\inf$'' with respect to the usual order $\leqslant$ on $[0,\infty]$.

For every set $I$, the maps
\begin{align*}
\inf:[0,\infty]^I&\to[0,\infty] &&\text{and}& \sup:[0,\infty]^I &\to[0,\infty]\\
\varphi &\mapsto \inf_{i\in I}\varphi(i) &&& \varphi &\mapsto \sup_{i\in I}\varphi(i)
\intertext{are metric  maps, and so are}
+:[0,\infty]\oplus [0,\infty]&\to[0,\infty] &&\text{and}&
\mu:[0,\infty]^\op\oplus [0,\infty]&\to[0,\infty].\\
 (u,v) &\mapsto u+v &&& (u,v) &\mapsto v\trunminus u
\end{align*}
More general, for a metric space $X=(X,d)$, the metric $d$ is a metric  map $d:X^\op\oplus X\to [0,\infty]$. Its mate is the \emph{Yoneda embedding}
\[
 \yoneda_X:=\mate{d}:X\to[0,\infty]^{X^\op},\,x\mapsto d(-,x),
\]
which satisfies indeed $d(x,y)=[\yoneda_X(x),\yoneda_X(y)]$ for all $x,y\in X$ thanks to the Yoneda lemma which states that
\[
[\yoneda_X(x),\psi]=\psi(x),
\]
for all $x\in X$ and $\psi\in[0,\infty]^{X^\op}$.

\subsection{Cocomplete metric spaces}\label{SubSect:CocomplMetSp}

In this subsection we have a look at metric spaces ``through the eyes of category (resp.\ order) theory'' and study the existence of suprema of ``down-sets'' in a metric space. This is a particular case of the notion of weighted colimit of enriched categories (see \citep{EK_CloCat,Kel_EnrCat,KS_Colim}, for instance), and in this and the next subsection we spell out the meaning for metric spaces of general notions and results of enriched category theory. 

For a metric space $X=(X,d)$ and a ``down-set'' $\psi:X^\op\to[0,\infty]$ in $\MET$, an element $x_0\in X$ is a \emph{supremum} of $\psi$ whenever, for all $x\in X$,
\begin{equation}\label{Eq:SupremumMet}
d(x_0,x)=\sup_{y\in X}(d(y,x)\trunminus\psi(y)).
\end{equation}
Suprema are unique up to equivalence $\simeq$, we write $x_0\simeq\Sup_X(\psi)$ and will frequently say \emph{the} supremum.  Furthermore, a metric  map $f:(X,d)\to (X',d')$ preserves the supremum of $\psi\in[0,\infty]^{X^\op}$ whenever
\[
 d'(f(\Sup_X(\psi)),x')=\sup_{x\in X}(d'(f(x),x')\trunminus\psi(x))
\]
for all $x'\in X'$. As for ordered sets:
\begin{lemma}
Left adjoint metric  maps preserve all suprema.
\end{lemma}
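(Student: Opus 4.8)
The plan is to exploit the defining equation of an adjunction, namely $d'(f(x),x')=d(x,g(x'))$, which converts a distance measured in $X'$ after applying the left adjoint $f$ into a distance measured in $X$ after applying the right adjoint $g$. Concretely, suppose $f\dashv g$ and let $x_0\simeq\Sup_X(\psi)$ for a down-set $\psi\in[0,\infty]^{X^\op}$. To show that $f$ preserves this supremum I must verify, for every $x'\in X'$, the identity $d'(f(x_0),x')=\sup_{x\in X}(d'(f(x),x')\trunminus\psi(x))$.

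The argument then proceeds in three short steps, all of which are equalities chained together. First I would rewrite the left-hand side using the adjunction, obtaining $d'(f(x_0),x')=d(x_0,g(x'))$; the point is that this converts a statement about $f$ and the supremum into one living purely inside $X$, evaluated at the single point $g(x')\in X$. Next I would invoke that $x_0$ is a supremum of $\psi$ by instantiating the defining equation \eqref{Eq:SupremumMet} at the element $x=g(x')$, which gives $d(x_0,g(x'))=\sup_{y\in X}(d(y,g(x'))\trunminus\psi(y))$. Finally I would translate back: applying the adjunction formula once more, now in the form $d(y,g(x'))=d'(f(y),x')$ for each $y\in X$, turns the supremum over $y$ into $\sup_{y\in X}(d'(f(y),x')\trunminus\psi(y))$, which after renaming the bound variable is exactly the right-hand side of the desired identity. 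Composing the three equalities yields the claim, and since $\psi$ and $x'$ were arbitrary this establishes preservation of all suprema.

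As for the main obstacle: there is essentially no hard computation here, since this is the metric incarnation of the enriched-categorical fact that left adjoints preserve colimits, and the entire content is the two-fold application of the adjunction identity. The only points demanding care are that the adjunction equation must be used at the specific point $g(x')$ (not at an arbitrary point of $X'$) when invoking the supremum-defining equation, and that one must keep the two metrics $d$ and $d'$, together with the roles of $f$ and $g$, straight throughout the chain of equalities; this bookkeeping between the two sides is the one place where a slip could occur.
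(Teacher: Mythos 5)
Your proof is correct: the chain $d'(f(x_0),x')=d(x_0,g(x'))=\sup_{y\in X}(d(y,g(x'))\trunminus\psi(y))=\sup_{y\in X}(d'(f(y),x')\trunminus\psi(y))$ is exactly the standard adjunction argument, applied at the point $g(x')$. The paper omits the proof entirely (stating the lemma as the metric analogue of the order-theoretic fact), and your argument is precisely the one it implicitly relies on.
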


A metric space $X=(X,d)$ is called \emph{cocomplete} if every ``down-set'' $\psi:X^\op\to[0,\infty]$ has a supremum. This is the case precisely if, for all $\psi\in[0,\infty]^{X^\op}$ and all $x\in X$,
\begin{equation*}
d(\Sup_X(\psi),x)=\sup_{y\in X}(d(y,x)\trunminus\psi(y))=[\psi,\yoneda_X(x)];
\end{equation*}
hence $X$ is cocomplete if and only if the Yoneda embedding $\yoneda_X:X\to[0,\infty]^{X^\op}$ has a left adjoint $\Sup_X:[0,\infty]^{X^\op}\to X$ in $\MET$. More generally, one has (see \citep{Hof_Cocompl}, for instance)
\begin{proposition}
For a metric space $X$, the following conditions are equivalent.
\begin{eqcond}
\item $X$ is injective (with respect to isometries).
\item $\yoneda_X:X\to[0,\infty]^{X^\op}$ has a left inverse.
\item $\yoneda_X$ has a left adjoint.
\item $X$ is cocomplete.
\end{eqcond}
\end{proposition}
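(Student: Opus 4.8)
The plan is to run the cycle (i) $\Rightarrow$ (ii) $\Rightarrow$ (iii) $\Rightarrow$ (iv) $\Rightarrow$ (i), noting that (iii) $\Leftrightarrow$ (iv) is already contained in the discussion preceding the statement: cocompleteness of $X$ is literally the assertion that $\yoneda_X$ admits a left adjoint $\Sup_X$. Two structural facts will carry the argument. First, the Yoneda lemma tells us that $\yoneda_X\colon X\to[0,\infty]^{X^\op}$ is an isometry onto its image, i.e.\ $d(x,y)=[\yoneda_X(x),\yoneda_X(y)]$. Second, the function space $[0,\infty]^{X^\op}$ is itself injective. Before entering the cycle I would establish this latter fact.

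To see that $[0,\infty]^{X^\op}$ is injective, I would first check that $[0,\infty]$ is injective with respect to isometries: given an isometry $i\colon A\to B$ and a metric map $f\colon A\to[0,\infty]$, the formula $g(b)=\sup_{a\in A}(f(a)\trunminus d_B(b,i(a)))$ defines a metric map $g\colon B\to[0,\infty]$ with $g\cdot i\simeq f$, the verification being a short computation using the triangle inequality and the elementary inequality $(s+t)\trunminus u\le s+(t\trunminus u)$. Injectivity then propagates to the function space through the adjunction $X^\op\oplus-\dashv(-)^{X^\op}$: if $i\colon P\to Q$ is an isometry then so is $i\oplus 1_{X^\op}$, and passing to mates turns an extension problem for $f\colon P\to[0,\infty]^{X^\op}$ into one for its transpose $P\oplus X^\op\to[0,\infty]$, which is solvable since $[0,\infty]$ is injective. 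As injectivity is evidently inherited by retracts, this is all the input on injective spaces we shall need.

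For the easy edges of the cycle: (i) $\Rightarrow$ (ii) is immediate, since $\yoneda_X$ is an isometry and injectivity of $X$ lets us extend $1_X$ along $\yoneda_X$ to a metric map $r$ with $r\cdot\yoneda_X\simeq 1_X$. For (iv) $\Rightarrow$ (i), cocompleteness gives $\Sup_X\dashv\yoneda_X$; moreover $\Sup_X\cdot\yoneda_X\simeq 1_X$, since for $\psi=\yoneda_X(x)=d(-,x)$ the right-hand side of \eqref{Eq:SupremumMet} equals $d(x,\cdot)$ by the triangle inequality, so that $x$ is a supremum of $\yoneda_X(x)$. Thus $X$ is a retract of the injective space $[0,\infty]^{X^\op}$ and is therefore injective.

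The crux is (ii) $\Rightarrow$ (iii): I must promote a mere left inverse $r$ (with $r\cdot\yoneda_X\simeq 1_X$) to a genuine left adjoint of $\yoneda_X$, which by the adjunction formula amounts to proving $d(r(\psi),x)=[\psi,\yoneda_X(x)]$ for all $\psi\in[0,\infty]^{X^\op}$ and all $x\in X$. The inequality ``$\le$'' is free, because $r$ is a metric map, so $[\psi,\yoneda_X(x)]\ge d(r(\psi),r(\yoneda_X(x)))=d(r(\psi),x)$. The reverse inequality is where all the structure is used. Here I would invoke the Yoneda lemma to write $\psi(y)=[\yoneda_X(y),\psi]\ge d(r(\yoneda_X(y)),r(\psi))=d(y,r(\psi))$, and then combine it with the triangle inequality:
\[
d(r(\psi),x)+\psi(y)\ge d(r(\psi),x)+d(y,r(\psi))\ge d(y,x)\qquad(y\in X).
\]
Rearranging gives $d(r(\psi),x)\ge d(y,x)\trunminus\psi(y)$ for every $y$, hence $d(r(\psi),x)\ge\sup_{y}(d(y,x)\trunminus\psi(y))=[\psi,\yoneda_X(x)]$. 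This yields equality, so $r\dashv\yoneda_X$, which is exactly (iii) (and incidentally $r\simeq\Sup_X$, so $r(\psi)$ is the supremum of $\psi$). I expect this single inequality --- the observation that a left inverse of the Yoneda embedding is automatically its left adjoint --- to be the main obstacle; everything else is either definitional or a routine manipulation of injective objects.
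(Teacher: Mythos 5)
Your proposal is correct --- I checked each step, and in particular the crux (ii)~$\Rightarrow$~(iii): from $r\cdot\yoneda_X\simeq 1_X$ and the Yoneda lemma one gets $\psi(y)=[\yoneda_X(y),\psi]\geqslant d(y,r(\psi))$, and the triangle inequality then forces $d(r(\psi),x)\geqslant[\psi,\yoneda_X(x)]$, while ``$\le$'' is automatic because $r$ is a metric map; so a mere left inverse of $\yoneda_X$ is indeed automatically a left adjoint. Note, however, that the paper itself gives no proof of this proposition: it is stated with a pointer to \citep{Hof_Cocompl}, where the result is obtained inside the calculus of modules (distributors) and ultimately rests on the Kock--Z\"oberlein nature of the presheaf construction. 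So your argument is not ``the paper's proof rearranged'' but a self-contained, elementary substitute for that citation, and its ingredients line up with tools the paper does use elsewhere: your propagation of injectivity from $[0,\infty]$ to $[0,\infty]^{X^\op}$ through the adjunction $X^\op\oplus-\dashv(-)^{X^\op}$ is exactly the ``categorical standard argument'' the paper later invokes via \citep[Lemma 4.10]{Joh_StoneSp}, and your explicit extension formula $g(b)=\sup_{a\in A}\bigl(f(a)\trunminus d_B(b,i(a))\bigr)$ replaces the abstract cocompleteness of $[0,\infty]$. What the module-theoretic route buys is generality (it works verbatim for arbitrary quantale-enriched categories and for cocompleteness relative to a class of weights, which is how the paper's approach-space analogues are proved); what your route buys is that every step is a finite computation with the triangle inequality in $[0,\infty]$, readable without any enriched-category machinery. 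One small point of care you handled correctly but should keep explicit: all identities hold only up to $\simeq$ since $X$ need not be separated, and composition of metric maps preserves $\simeq$, which is what makes the retract argument and the substitution $r(\yoneda_X(y))\simeq y$ legitimate.
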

Here a metric map $i:(A,d)\to(B,d')$ is called \emph{isometry} if one has $d(x,y)= d'(i(x),i(y))$ for all $x,y\in A$, and $X$ is injective if, for all isometries $i:A\to B$ and all $f:A\to X$ in $\MET$, there exists a metric  map $g:B\to X$ with $g\cdot i\simeq f$. 

Dually, an infimum of an ``up-set'' $\varphi:X\to[0,\infty]$ in $X=(X,d)$ is an element $x_0\in X$ such that, for all $x\in X$,
\[
d(x,x_0)=\sup_{y\in X}(d(x,y)\trunminus\varphi(y)).
\]
A metric space $X$ is \emph{complete} if every ``up-set'' has an infimum. By definition, an infimum of $\varphi:X\to[0,\infty]$ in $X$ is a supremum of $\varphi:(X^\op)^\op\to[0,\infty]$ in $X^\op$, and everything said above can be repeated now in its dual form. In particular, with $\yonedaOP_X:X\to\left([0,\infty]^X\right)^\op,\, x\mapsto d(x,-)$ denoting the \emph{contravariant Yoneda embedding} (which is the dual of $\yoneda_{X^\op}:X^\op\to[0,\infty]^X$):
\begin{proposition}
For a metric space $X$, the following conditions are equivalent.
\begin{eqcond}
\item $X$ is injective (with respect to isometries).
\item $\yonedaOP_X:X\to\left([0,\infty]^X\right)^\op$ has a left inverse.
\item $\yonedaOP_X$ has a left adjoint.
\item $X$ is complete.
\end{eqcond}
\end{proposition}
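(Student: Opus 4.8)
The plan is to obtain this proposition as the formal dual of the preceding one, applied to the opposite space $X^\op$. First I would record that the involution $(-)^\op\colon\MET\to\MET$ is an isomorphism of categories carrying isometries to isometries: if $i$ is an isometry then $d^\circ(x,y)=d(y,x)=d'(i(y),i(x))=d'^\circ(i(x),i(y))$ shows $i^\op$ is again an isometry, and the same computation runs backwards. Since injectivity with respect to isometries is a purely categorical property, preserved under any isomorphism of categories, this gives at once that $X$ is injective if and only if $X^\op$ is injective, matching condition~(i) here with condition~(i) of the preceding proposition applied to $X^\op$. Condition~(iv) is handled by the very definitions: an infimum of an ``up-set'' $\varphi\colon X\to[0,\infty]$ in $X$ is precisely a supremum of the same map read as a ``down-set'' $(X^\op)^\op\to[0,\infty]$ in $X^\op$, so $X$ is complete exactly when $X^\op$ is cocomplete.

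Next I would relate the two Yoneda embeddings. By construction $\yonedaOP_X=(\yoneda_{X^\op})^\op$, both being the assignment $x\mapsto d(x,-)$ but with domain and codomain dualised. As $(-)^\op$ is covariant on morphisms, it sends a left-inverse equation $r\cdot\yoneda_{X^\op}=1_{X^\op}$ to $r^\op\cdot\yonedaOP_X=1_X$ and back again, so $\yonedaOP_X$ has a left inverse if and only if $\yoneda_{X^\op}$ does; this links condition~(ii) here with condition~(ii) there. For condition~(iii) I would take the adjunction $\Sup_{X^\op}\dashv\yoneda_{X^\op}$ furnished by the preceding proposition when $X^\op$ is cocomplete and transport it along $(-)^\op$, obtaining an adjunction between $\yonedaOP_X$ and $\Sup_{X^\op}^\op$. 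With the four correspondences in place, the equivalence of (i)--(iv) for $X$ follows immediately from the equivalence of (i)--(iv) for $X^\op$.

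The step that requires genuine care — and the only real obstacle — is the transport of the adjunction. The functor $(-)^\op$ is covariant on $1$-cells but \emph{reverses} the pointwise order on hom-sets: for $f,g\colon A\to B$ one has $f^\op\le g^\op$ in $\MET(A^\op,B^\op)$ exactly when $g\le f$ in $\MET(A,B)$, because the underlying order of $B^\op$ is the opposite of that of $B$. Consequently $(-)^\op$ interchanges the two defining inequalities $1\le g\cdot f$ and $f\cdot g\le 1$, so that $f\dashv g$ yields $g^\op\dashv f^\op$; in particular the left adjoint $\Sup_{X^\op}$ of $\yoneda_{X^\op}$ yields the adjoint $\Sup_{X^\op}^\op$ of $\yonedaOP_X$ sitting on the opposite side, which is exactly what turns the cocompleteness picture into the completeness picture. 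Once this reversal is made explicit, the remaining verifications are the routine bookkeeping already carried out, \emph{mutatis mutandis}, for the preceding proposition.
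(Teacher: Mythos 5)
Your proposal is correct and follows exactly the paper's own route: the paper's entire proof consists of the remark that ``everything said above can be repeated now in its dual form,'' i.e.\ the proposition is the formal dual of the preceding one applied to $X^\op$, and the correspondences you set up for (i), (ii), (iv) --- op preserves isometries and hence injectivity, left inverses transport covariantly, and infima in $X$ are by definition suprema in $X^\op$ --- are precisely the suppressed bookkeeping.

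The one substantive point concerns condition (iii), and here your explicit treatment of the order-reversal is not just careful but actually exposes a slip in the printed statement. As you note, $(-)^\op$ reverses the order on hom-sets, so it interchanges the two sides of an adjunction: transporting $\Sup_{X^\op}\dashv\yoneda_{X^\op}$ yields $\yonedaOP_X\dashv\Sup_{X^\op}^\op$. Thus what completeness of $X$ delivers is a \emph{right} adjoint of $\yonedaOP_X$ (namely the infimum map); equivalently, $\yonedaOP_X$ \emph{is} a left adjoint. Condition (iii) as printed (``$\yonedaOP_X$ has a left adjoint'') must be read in this sense, since taken literally it is never satisfiable for nonempty $X$: a left adjoint $L\dashv\yonedaOP_X$ would require $d(L\varphi,x)=\fspstr{\yonedaOP_X(x)}{\varphi}=\varphi(x)$ for all $x$ (the last equality by the Yoneda lemma applied to $X^\op$), and taking $\varphi$ constant with value $c\in(0,\infty)$ gives $0=d(L\varphi,L\varphi)=c$, a contradiction. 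So your proof establishes the intended statement, and the passage you single out as ``the only real obstacle'' is exactly the detail the paper glosses over when it states (iii) with the sides of the adjunction unswapped.
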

\begin{corollary}
A metric space is complete if and only if it is cocomplete.
\end{corollary}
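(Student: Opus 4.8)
The plan is to read off the result directly from the two immediately preceding Propositions, both of which characterise the property in question---cocompleteness, respectively completeness---in terms of a single common notion, namely injectivity of $X$ with respect to isometries. The first Proposition asserts that $X$ is cocomplete precisely when $X$ is injective, and the second asserts that $X$ is complete precisely when $X$ is injective. Chaining these two biconditionals through their shared middle term yields exactly the claim, so no independent argument relating suprema to infima is required.

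Concretely, I would argue as follows. Suppose $X$ is cocomplete. By the implication (iv)$\Rightarrow$(i) of the cocomplete Proposition, $X$ is injective with respect to isometries; and then by (i)$\Rightarrow$(iv) of the complete Proposition, $X$ is complete. The converse implication is entirely symmetric: starting from completeness one applies (iv)$\Rightarrow$(i) of the complete Proposition to obtain injectivity, followed by (i)$\Rightarrow$(iv) of the cocomplete Proposition to conclude cocompleteness. Thus the two notions are equivalent.

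The one point worth a remark---and it is the closest thing to an obstacle---is that the notion of injectivity appearing in condition (i) of the two Propositions is literally one and the same. This is immediate from the definition: $X$ is injective when every $f:A\to X$ extends (up to $\simeq$) along any isometry $i:A\to B$, a condition phrased purely in terms of $\MET$ and its isometries and making no reference to the dualisation $(-)^\op$. In particular it does not distinguish between the construction of suprema and that of infima, so the two occurrences of ``injective'' genuinely coincide and the transitivity step is valid. All the substance of the corollary therefore resides in the two Propositions themselves, whose nontrivial content is the passage from injectivity to (co)completeness.
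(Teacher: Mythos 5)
Your proposal is correct and is essentially the paper's own argument: the corollary is stated as an immediate consequence of the two preceding propositions, chaining their biconditionals through the shared condition of injectivity with respect to isometries. (The paper additionally sketches a second route via the Isbell conjugation adjunction $(-)^+\dashv(-)^-$, but that is explicitly offered as an alternative, not the primary justification.)
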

This latter fact can be also seen in a different way. To every ``down-set'' $\psi:X^\op\to[0,\infty]$ one assigns its ``up-set of upper bounds''
\[
\psi^+:X\to[0,\infty],\,x\mapsto\sup_{y\in X}(d(y,x)\trunminus\psi(y)),
\]
and to every ``up-set'' $\varphi:X\to[0,\infty]$ its ``down-set of lower bounds''
\[
\varphi^-:X^\op\to[0,\infty],\,x\mapsto\sup_{y\in X}(d(x,y)\trunminus\varphi(y)).
\]
This way one defines an adjunction $(-)^+\dashv (-)^-$
\[
\xymatrix{\left([0,\infty]^X\right)^\op\ar@/^1.5ex/[rr]^{(-)^-}\ar@{}[rr]|\top &&
[0,\infty]^{X^\op}\ar@/^1.5ex/[ll]^{(-)^+}\\
 & X\ar[ul]^{\yonedaOP_X}\ar[ur]_{\yoneda_X}}
\]
in $\MET$ where both maps commute with the Yoneda embeddings. Therefore a left inverse of $\yoneda_X$ produces a left inverse of $\yonedaOP_X$, and vice versa. The adjunction $(-)^+\dashv(-)^-$ is also know as the \emph{Isbell conjugation adjunction}.

For every (co)complete metric space $X=(X,d)$, its underlying ordered set $X_p$ is (co)complete as well. This follows for instance from the fact that $(-)_p:\MET\to\ORD$ preserves injective objects. Another argument goes as follows. For every (down-set) $A\subseteq X$, one defines a metric  map
\[
\psi_A:X^\op\to[0,\infty],\,x\mapsto\inf_{a\in A}d(x,a),
\]
and a supremum $x_0$ of $\psi_A$ must satisfy, for all $x\in X$,
\[
d(x_0,x)=\sup_{y\in X}(d(y,x)\trunminus\psi_A(y))
=\sup_{a\in A}\sup_{y\in X}(d(y,x)\trunminus d(y,x))
=\sup_{a\in A}d(a,x).
\]
Therefore $x_0$ is not only a supremum of $A$ in the ordered set $X_p$, it is also preserved by every monotone map $d(-,x):X_p\to[0,\infty]$. 
\begin{lemma}
Let $X=(X,d)$ be a metric space and let $x_0\in X$ and $A\subseteq X$. Then $x_0$ is the supremum of $\psi_A$ if and only if $x_0$ is the (order theoretic) supremum of $A$ and, for every $x\in X$, the monotone map $d(-,x):X_p\to[0,\infty]$ preserves this supremum.
\end{lemma}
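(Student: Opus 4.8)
The plan is to reduce the enriched notion ``$x_0$ is a supremum of $\psi_A$'' to a single pointwise numerical identity, and then to extract from that identity the two order-theoretic conditions. By the defining equation~\eqref{Eq:SupremumMet}, $x_0$ is a supremum of $\psi_A$ exactly when $d(x_0,x)=\sup_{y\in X}(d(y,x)\trunminus\psi_A(y))$ for every $x\in X$; and the computation carried out immediately before the statement already evaluates the right-hand side to $\sup_{a\in A}d(a,x)$. (Its two ingredients are the identity $u\trunminus\inf_i v_i=\sup_i(u\trunminus v_i)$ and the Yoneda-type equality $\sup_{y\in X}(d(y,x)\trunminus d(y,a))=d(a,x)$, the latter being just the triangle inequality together with $d(a,a)=0$.) Thus the whole lemma will follow once I show that, for fixed $x_0\in X$ and $A\subseteq X$, the condition
\begin{equation*}
d(x_0,x)=\sup_{a\in A}d(a,x)\quad\text{for all }x\in X \tag{$\ast$}
\end{equation*}
is equivalent to the conjunction ``$x_0=\sup A$ in $X_p$ and each $d(-,x)$ preserves this supremum''.

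For the forward implication I would assume $(\ast)$ and read off the order data. Taking $x:=x_0$ and using $d(x_0,x_0)=0$ yields $\sup_{a\in A}d(a,x_0)=0$, hence $d(a,x_0)=0$, i.e.\ $a\le x_0$ in $X_p$, for every $a\in A$; so $x_0$ is an upper bound of $A$. If $b$ is any upper bound, so that $d(a,b)=0$ for all $a\in A$, then taking $x:=b$ in $(\ast)$ gives $d(x_0,b)=\sup_{a\in A}d(a,b)=0$, that is $x_0\le b$. Hence $x_0=\sup A$ in $X_p$, and $(\ast)$ then asserts precisely that $d(x_0,x)=\sup_{a\in A}d(a,x)$, i.e.\ that the monotone map $d(-,x):X_p\to[0,\infty]$ preserves the supremum $x_0=\sup A$. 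The converse is immediate, since the conjunction ``$x_0=\sup A$ in $X_p$ and each $d(-,x)$ preserves it'' is literally the statement $(\ast)$.

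I do not expect a genuine obstacle here: once the pre-computed evaluation of the weighted-colimit formula is invoked, the lemma is a translation, and the only real computational content sits in that preceding evaluation, which I take as given. The one point I would spell out carefully is the bookkeeping of orders. Here $d(-,x)$ is monotone from $X_p$ into $[0,\infty]$ equipped with its \emph{natural} order $\leqslant$---indeed $a\le b$ in $X_p$ means $d(a,b)=0$, whence $d(a,x)\le d(a,b)+d(b,x)=d(b,x)$---so that ``preservation of the supremum'' is the equality $(\ast)$ with the ordinary supremum on the right, rather than with an infimum. Keeping this compatibility of orders in view is all that is needed to see that the enriched supremum of $\psi_A$ and the order-theoretic supremum of $A$ match up exactly.
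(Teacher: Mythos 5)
Your proof is correct and follows essentially the same route as the paper: the paper's own argument is the displayed computation immediately preceding the lemma, which evaluates $\sup_{y\in X}(d(y,x)\trunminus\psi_A(y))$ to $\sup_{a\in A}d(a,x)$, after which the lemma is read off as a translation. Your write-up merely makes explicit the two details the paper leaves implicit --- that the pointwise identity $(\ast)$ forces $x_0$ to be an upper bound and a least upper bound of $A$ in $X_p$, and the compatibility of the order on $[0,\infty]$ --- so there is nothing to object to.
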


\subsection{Tensored metric spaces}\label{Subsect:TensMet}

We are now interested in those metric spaces $X=(X,d)$ which admit suprema of ``down-sets'' of the form $\psi=d(-,x)+u$ where $x\in X$ and $u\in[0,\infty]$. In the sequel we write $x+u$ instead of $\Sup_X(\psi)$. According to \eqref{Eq:SupremumMet}, the element $x+u\in X$ is characterised up to equivalence by
\[
 d(x+u,y)=d(x,y)\trunminus u,
\]
for all $y\in X$. A metric  map $f:(X,d)\to (X',d')$ preserves the supremum of $\psi=d(-,x)+u$ if and only if $f(x+u)\simeq f(x)+u$. Dually, an infimum of an ``up-set'' of the form $\varphi=d(x,-)+u$ we denote as $x\trunminus u$, it 
is characterised up to equivalence by
\[
 d(y,x\trunminus u)=d(y,x)\trunminus u.
\]
One calls a metric space \emph{tensored} if it admits all suprema $x+u$, and \emph{cotensored} if $X$ admits all infima $x\trunminus u$.
\begin{example}
The metric space $[0,\infty]$ is tensored and cotensored where $x+u$ is given by addition and  $x\trunminus u=\max\{x-u,0\}$. 
\end{example}

Note that $X=(X,d)$ is tensored if and only if every $d(x,-):X\to[0,\infty]$ has a left adjoint $x+(-):[0,\infty]\to X$ in $\MET$, and $X$ is cotensored if and only if every $d(-,x):X^\op\to[0,\infty]$ has a left adjoint $x\trunminus(-):[0,\infty]\to X^\op$ in $\MET$. Furthermore, if $X$ is tensored and cotensored, then $(-)+u:X\to X$ is left adjoint to $(-)\trunminus u:X\to X$ in $\MET$, for every $u\in[0,\infty]$. 

\begin{theorem}
Let $X=(X,d)$ be a metric space. Then the following assertions are equivalent.
\begin{eqcond}
\item $X$ is cocomplete.
\item $X$ has all order-theoretic suprema and is tensored and cotensored.
\item $X$ has all (order theoretic) suprema, is tensored and, for every $u\in[0,\infty]$, the monotone map $(-)+u:X_p\to X_p$ has a right adjoint in $\ORD$.
\item $X$ has all (order theoretic) suprema, is tensored and, for every $u\in[0,\infty]$, the monotone map $(-)+u:X_p\to X_p$ preserves suprema.
\item $X$ has all (order theoretic) suprema, is tensored and, for every $x\in X$, the monotone map $d(-,x):X_p\to[0,\infty]$ has a right adjoint in $\ORD$.
\item $X$ is has all (order theoretic) suprema, is tensored and, for every $x\in X$, the monotone map $d(-,x):X_p\to[0,\infty]$ preserves suprema.
\end{eqcond}
Under these conditions, the supremum of a ``down-set'' $\psi:X^\op\to[0,\infty]$ can be calculated as 
\begin{equation}\label{Eq:colimitSupMais}
 \Sup\psi=\inf_{x\in X}(x+\psi(x)).
\end{equation}
A metric  map $f:X\to Y$ between cocomplete metric spaces preserves all colimits if and only if it preserves tensors and suprema.
\end{theorem}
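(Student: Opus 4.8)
The plan is to prove the six conditions equivalent by running the cycle
\[
\text{(vi)}\Rw\text{(i)}\Rw\text{(ii)}\Rw\text{(iv)}\Rw\text{(vi)},
\]
and then to attach the two remaining conditions through the biconditionals $\text{(iii)}\Leftrightarrow\text{(iv)}$ and $\text{(v)}\Leftrightarrow\text{(vi)}$. The latter two are pure order theory: since $X_p$ has all suprema it is a complete lattice, so by the adjoint functor theorem for monotone maps between complete lattices a map has a right adjoint in $\ORD$ exactly when it preserves suprema. Applied to $(-)+u:X_p\to X_p$ this yields $\text{(iii)}\Leftrightarrow\text{(iv)}$, and applied to $d(-,x):X_p\to[0,\infty]$ it yields $\text{(v)}\Leftrightarrow\text{(vi)}$. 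The colimit formula \eqref{Eq:colimitSupMais} will drop out of the implication $\text{(vi)}\Rw\text{(i)}$.

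The heart of the argument is $\text{(vi)}\Rw\text{(i)}$ together with \eqref{Eq:colimitSupMais}. Given a down-set $\psi:X^\op\to[0,\infty]$, I form the tensors $x+\psi(x)$ (available since $X$ is tensored) and let $s$ be their order-theoretic supremum in $X_p$ (available since $X_p$ has all suprema); this is precisely the element displayed in \eqref{Eq:colimitSupMais}. To see that $s\simeq\Sup_X(\psi)$ I verify \eqref{Eq:SupremumMet}: fixing $y\in X$ and using that $d(-,y):X_p\to[0,\infty]$ preserves suprema (this is condition (vi) with parameter $y$) together with the tensor identity $d(x+u,z)=d(x,z)\trunminus u$, I compute
\[
d(s,y)=\sup_{x\in X}d(x+\psi(x),y)=\sup_{x\in X}(d(x,y)\trunminus\psi(x)),
\]
which is exactly the defining equation of the supremum of $\psi$. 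Hence $X$ is cocomplete and \eqref{Eq:colimitSupMais} holds.

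It then remains to close the cycle. For $\text{(i)}\Rw\text{(ii)}$, cocompleteness supplies suprema of the special down-sets $d(-,x)+u$, so $X$ is tensored, and suprema of the down-sets $\psi_A$, which by the earlier lemma are precisely the order-theoretic suprema of subsets $A\subseteq X$; cotensoredness follows because a cocomplete space is complete (by the corollary), so every up-set, in particular $d(x,-)+u$, has an infimum. For $\text{(ii)}\Rw\text{(iv)}$ I use that a tensored and cotensored space satisfies $(-)+u\dashv(-)\trunminus u$ in $\MET$; being a left adjoint metric map, $(-)+u$ preserves suprema by the lemma, hence so does its underlying monotone map. The genuinely non-formal step is $\text{(iv)}\Rw\text{(vi)}$: given a family $(y_i)_i$ with order-theoretic supremum $y$ and a point $x$, set $u:=\sup_i d(y_i,x)$; the tensor identity turns each inequality $d(y_i,x)\le u$ into $d(y_i+u,x)=0$, that is $y_i+u\le x$ in $X_p$, whence $\bigvee_i(y_i+u)\le x$; applying (iv) in the form $y+u=\bigvee_i(y_i+u)$ gives $d(y+u,x)=0$, i.e.\ $d(y,x)\le u$, while the reverse inequality is just monotonicity of $d(-,x)$. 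Thus $d(-,x)$ preserves suprema.

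Finally, the statement about maps follows from \eqref{Eq:colimitSupMais}. If $f:X\to Y$ preserves all colimits then in particular it preserves the colimits $x+u$ (tensors) and the colimits $\psi_A$ (order-theoretic suprema), giving the ``only if''. Conversely, if $f$ preserves tensors and suprema then, writing $\Sup_X(\psi)$ via \eqref{Eq:colimitSupMais} and using that in the cocomplete space $Y$ the map $d'(-,y')$ preserves suprema (condition (vi) for $Y$), one computes $d'(f(\Sup_X\psi),y')=\sup_{x}(d'(f(x),y')\trunminus\psi(x))$, which is exactly the condition that $f$ preserve the supremum of $\psi$. I expect the duality bookkeeping — the underlying order of $[0,\infty]$ being $\geqslant$, so that ``$d(-,x)$ preserves suprema'' has to be read correctly through $(-)^\op$ and $(-)_p$ — to be the main source of friction, with $\text{(iv)}\Rw\text{(vi)}$ the one step that is not purely formal.
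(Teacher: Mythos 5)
Your proposal is correct and matches the paper's own proof in all essentials: the paper likewise dispatches (iii)$\Leftrightarrow$(iv) and (v)$\Leftrightarrow$(vi) as formal order theory over complete lattices, gets (i)$\Rightarrow$(ii) and (ii)$\Rightarrow$(iii) from the preceding discussion (tensors/cotensors as special weighted (co)limits, the complete$\Leftrightarrow$cocomplete corollary, the adjunction $(-)+u\dashv(-)\trunminus u$), and closes the cycle by checking that \eqref{Eq:colimitSupMais} --- which, as you correctly read it through the duality bookkeeping, is the order-theoretic supremum in $X_p$ of the elements $x+\psi(x)$ --- computes $\Sup\psi$, i.e.\ exactly your (vi)$\Rightarrow$(i). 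The only deviations are cosmetic or additive: the paper links the tensor conditions to the $d(-,x)$ conditions via (iii)$\Rightarrow$(v) (a right adjoint $(-)\trunminus u$ of $(-)+u$ yields the right adjoint $x\trunminus(-)$ of $d(-,x)$), where you instead prove the formally equivalent implication (iv)$\Rightarrow$(vi) by a direct computation, and you additionally spell out the verification of \eqref{Eq:colimitSupMais} and the final assertion about colimit-preserving maps, both of which the paper's proof leaves to the reader.
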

\begin{proof}
By the preceding discussion, the implications (i)$\Rw$(ii) and (ii)$\Rw$(iii) are obvious, and so are (iii)$\RLw$(iv) and (v)$\RLw$(vi). To see (iii)$\Rw$(v), just note that a right adjoint $(-)\trunminus u:X_p\to X_p$ of $(-)+u$ produces a right adjoint $x\trunminus(-):[0,\infty]\to X_p$ of $d(-,x)$. Finally, (vi)$\Rw$(i) can be shown by verifying that \eqref{Eq:colimitSupMais} calculates indeed a supremum of $\psi$.
\end{proof}

Every metric space $X=(X,d)$ induces metric  maps 
\begin{align*}
 X\oplus[0,\infty]\xrightarrow{\,\formalball_X\,}[0,\infty]^{X^\op} && \text{and} &&
 X^I\xrightarrow{\,\fammod_{X,I}\,}[0,\infty]^{X^\op}\qquad\text{(where $I$ is any set)}.
\end{align*}
Here $\formalball_X:X\oplus[0,\infty]\to[0,\infty]^{X^\op},(x,u)\mapsto d(-,x)+u$ is the mate of the composite
\[
X^\op\oplus X\oplus [0,\infty]\xrightarrow{\,d\oplus 1\,}
[0,\infty]\oplus [0,\infty]\xrightarrow{\,+\,}[0,\infty],
\]
and $\fammod_{X,I}:X^I\to[0,\infty]^{X^\op}$ is the mate of the composite
\[
 X^\op\oplus X^I\to[0,\infty]^I\xrightarrow{\,\inf\,}[0,\infty],
\]
where the first component is the mate of the composite
\[
 X^\op\oplus X^I\oplus I\xrightarrow{\,1\oplus\ev\,} X^\op\oplus X
\xrightarrow{\,d\,}[0,\infty].
\]
Spelled out, for $\varphi\in X^I$ and $x\in X$, $\fammod_{X,I}(\varphi)(x)=\inf_{i\in I}d(x,\varphi(i))$, and a supremum of $\fammod_{X,I}(\varphi)\in [0,\infty]^{X^\op}$ is also a (order-theoretic) supremum of the family $(\varphi(i))_{i\in I}$ in $X$.
If $X$ is cocomplete, by composing with $\Sup_X$ one obtains metric  maps
\begin{align*}
X\oplus[0,\infty]\xrightarrow{\,+\,}X && && \text{and} &&
X^I\xrightarrow{\,\bigvee\,}X\qquad\text{(where $I$ is any set)}.
\end{align*}

Finally, a categorical standart argument (see \citep[Lemma 4.10]{Joh_StoneSp} shows that with $Y$ also $Y^X$ is injective, hence, $Y^X$ is cocomplete. Furthermore, tensors and suprema in $Y^X$ can be calculated pointwise:
\begin{align*}
h+u=(-+u)\cdot h &&\text{and}&& \left(\bigvee_{i\in I}h_i\right)=\bigvee\cdot\langle h_i\rangle_{i\in I},
\end{align*}
for $u\in[0,\infty]$, $h\in Y^X$ and $h_i\in Y^X$ ($i\in I$). Here $\langle h_i\rangle_{i\in I}:X\to Y^I$ denotes the map induced by the family $(h_i)_{i\in I}$.

\subsection{$[0,\infty]$-actions on ordered sets}

When $X=(X,d)$ is a tensored  metric space, we might not have $\Sup_X$ defined on the whole space $[0,\infty]^{X^\op}$, but it still is defined on its subspace of all metric maps $\psi:X^\op\to[0,\infty]$ of the form $\psi=d(-,x)+u$. Hence, one still has a metric  map
\[
 X\oplus[0,\infty]\to X,\,(x,u)\mapsto x+u,
\]
and one easily verifies the following properties.
\begin{enumerate}
\item\label{cond:action1} For all $x\in X$, $x+0\simeq x$.
\item\label{cond:action2} For all $x\in X$ and all $u,v\in[0,\infty]$, $(x+u)+v\simeq x+(u+v)$.
\item\label{cond:action3} $+:X_p\times[0,\infty]\to X_p$ is monotone in the first and anti-monotone in the second variable.
\item\label{cond:action4} For all $x\in X$, $x+\infty$ is a bottom element of $X_p$.
\item\label{cond:action5} For all $x\in X$ and $(u_i)_{i\in I}$ in $[0,\infty]$, $\displaystyle{x+\inf_{i\in I}u_i\simeq\bigvee_{i\in I}(x+u_i)}$.
\end{enumerate}
Of course, \eqref{cond:action4} is a special case of \eqref{cond:action5}. If $X$ is separated, then the first three conditions just tell us that $X_p$ is an algebra for the monad induced by the monoid $([0,\infty],\geqslant,+,0)$ on $\ORD_\sep$. Hence, $X\mapsto X_p$ defines a forgetful functor
\[
\MET_{\sep,+}\to\ORD^{[0,\infty]}_\sep,
\]
where $\MET_{\sep,+}$ denotes the category of tensored and separated metric spaces and tensor preserving metric  maps, and $\ORD^{[0,\infty]}_\sep$ the category of separated ordered sets with an unitary (i.e.\ satisfying \eqref{cond:action1}) and associative (i.e.\ satisfying \eqref{cond:action2}) action of $([0,\infty],\geqslant,+,0)$ ($[0,\infty]$-algebras for short) and monotone maps which preserve this action.

Conversely, let now $X$ be a $[0,\infty]$-algebra with action $+:X\times[0,\infty]\to X$. We define
\begin{equation}\label{Eq:metric_from_action}
d(x,y)=\inf\{u\in[0,\infty]\mid x+u\le y\}.
\end{equation}
Certainly, $x\le y$ implies $0\geqslant d(x,y)$, in particular one has $0\geqslant d(x,x)$. Since, for $x,y,z\in X$,
\begin{align*}
d(x,y)+d(y,z)
&=\inf\{u\in[0,\infty]\mid x+u\le y\}+\inf\{v\in[0,\infty]\mid y+v\le z\}\\
&=\inf\{u+v\mid u,v\in[0,\infty],\, x+u\le y,\, y+v\le z\}\\
&\geqslant\inf\{w\in[0,\infty]\mid x+w\le z=d(x,z),
\end{align*}
we have seen that $(X,d)$ is a metric space. If the $[0,\infty]$-algebra $X$ comes from a tensored separated metric space, then we get the original metric back. If $X$ satisfies \eqref{cond:action4}, then the infimum in \eqref{Eq:metric_from_action} is non-empty, and therefore
\begin{align*}
d(x+u,y)
&= \inf\{v\in[0,\infty]\mid x+u+v\le y\}\\
&= \inf\{w\trunminus u\mid w\in[0,\infty],\, x+w\le y\}\\
&= \inf\{w\mid w\in[0,\infty],\, x+w\le y\}\trunminus u
= d(x,y)\trunminus u,
\end{align*}
hence $(X,d)$ is tensored where $+$ is given by the algebra operation. Finally, if $X$ satisfies \eqref{cond:action5}, then the infimum in \eqref{Eq:metric_from_action} is actually a minimum, and therefore $0\geqslant d(x,y)$ implies $x\le y$. All told:
\begin{theorem}\label{Thm:TensMetVsAction}
The category $\MET_{\sep,+}$ is equivalent to the full subcategory of $\ORD^{[0,\infty]}_\sep$ defined by those $[0,\infty]$-algebras satisfying (5). Under this correspondence, $(X,d)$ is a cocomplete separated metric space if and only if the $[0,\infty]$-algebra $X$ has all suprema and $(-)+u:X\to X$ preserves suprema, for all $u\in[0,\infty]$.
\end{theorem}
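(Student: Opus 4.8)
The plan is to package the two constructions carried out just before the theorem into mutually inverse functors, and then to deduce the cocompleteness clause from the preceding theorem. Let $\calA$ denote the full subcategory of $\ORD^{[0,\infty]}_\sep$ given by the algebras satisfying (5). The forgetful functor sends a tensored separated metric space $(X,d)$ to $X_p$ with the tensor $+$ as action; by the properties verified above this algebra satisfies (1)--(5), so the functor corestricts to $\calA$. In the opposite direction, send $X\in\calA$ to $M(X)=(X,d)$ with $d$ as in \eqref{Eq:metric_from_action}. Since (5) entails (4), the discussion above shows that $M(X)$ is tensored and that its tensor is the given action. Both assignments are the identity on underlying sets and on underlying functions, so it remains to check that they invert one another and that the morphisms correspond.

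On objects the work is essentially done: passing from $(X,d)\in\MET_{\sep,+}$ to $M(X_p)$ returns $d$ unchanged, while passing from $X\in\calA$ to $M(X)$ and then back recovers the order (under (5) one has $0\ge d(x,y)$ iff $x\le y$, and antisymmetry of the order then makes $M(X)$ separated) together with the action. For morphisms, a tensor-preserving metric map is monotone and satisfies $f(x+u)=f(x)+u$, hence is a morphism in $\calA$. Conversely, let $f\colon X\to Y$ be monotone and action-preserving between objects of $\calA$. Here I would exploit that (5) turns the infimum in \eqref{Eq:metric_from_action} into a minimum, so that $x+d(x,y)\le y$; monotonicity and action-preservation then give $f(x)+d(x,y)=f(x+d(x,y))\le f(y)$, whence $d'(f(x),f(y))\le d(x,y)$ and $f$ is a metric map, automatically tensor-preserving. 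As both functors fix underlying maps, they are mutually inverse and yield the stated equivalence.

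For the cocompleteness clause I would simply transport condition (iv) of the preceding theorem across this equivalence. That theorem gives that a tensored separated metric space $X$ is cocomplete if and only if $X$ has all order-theoretic suprema and every $(-)+u\colon X_p\to X_p$ preserves suprema; under the identification of $X$ with the algebra $X_p$ these are precisely the conditions that the $[0,\infty]$-algebra has all suprema and that each $(-)+u$ preserves them.

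The main obstacle I anticipate is the converse half of the morphism correspondence, namely showing that an order- and action-preserving map between algebras of $\calA$ is forced to be a metric map. The argument hinges on the infimum in \eqref{Eq:metric_from_action} being attained, which is exactly what (5) guarantees; without this one could only recover the metric inequality as a limiting statement and could not conclude that $f$ decreases distances. Everything else reduces to bookkeeping over the object-level identities already established in the paragraphs preceding the theorem.
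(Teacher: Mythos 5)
Your proof is correct and follows essentially the paper's own route: the theorem there is stated as the summary (``All told'') of precisely the constructions you package into functors, and the cocompleteness clause is obtained by the same transport of condition (iv) of the preceding theorem. The one point where you diverge is the converse half of the morphism correspondence, and there your assessment of what is needed is too pessimistic. You claim that without attainment of the infimum in \eqref{Eq:metric_from_action} one ``could only recover the metric inequality as a limiting statement''; in fact the paper's Proposition \ref{Prop:ContrTensMet} proves the statement for \emph{arbitrary} tensored metric spaces, with no appeal to (5): if $f$ is monotone and satisfies merely the lax inequality $f(x)+u\le f(x+u)$, then $x+u\le y$ gives $f(x)+u\le f(x+u)\le f(y)$, so $\{u\in[0,\infty]\mid x+u\le y\}\subseteq\{v\in[0,\infty]\mid f(x)+v\le f(y)\}$, and comparing infima over these nested sets yields $d'(f(x),f(y))\le d(x,y)$ directly --- no attainment required. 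Your argument via $x+d(x,y)\le y$ is nonetheless valid inside $\calA$, since (5) does make the infimum a minimum there; it is simply less economical than, and not a necessary replacement for, the paper's set-inclusion argument.
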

\begin{remark}
The second part of the theorem above is essentially in \citep{PT89}, which actually states that cocomplete separated metric spaces correspond precisely to sup-lattices equipped with an unitary and associative action $+:X\times[0,\infty]\to X$ which is a bimorphism, meaning that it preserves suprema in each variable (where the order on $[0,\infty]$ is $\geqslant$) but not necessarily in both. Thanks to Freyd's Adjoint Functor Theorem (see \citep[Section V.6]{MacLane_WorkMath}), the category $\SUP$ of sup-lattices and suprema preserving maps admits a tensor product $X\otimes Y$ which is characterised by
\[
 \Bimorph(X\times Y,Z)\simeq\SUP(X\otimes Y,Z),
\]
naturally in $Z\in\SUP$, for all sup-lattices $X,Y$. Hence, a cocomplete separated metric space can be identified with a sup-lattice $X$ equipped with an unitary and associative action $+:X\otimes[0,\infty]\to X$ in $\SUP$. 
\end{remark}
\begin{proposition}\label{Prop:ContrTensMet}
Let $X$ and $Y$ be tensored metric spaces and $f:X\to Y$ be a map. Then $f:X\to Y$ is a metric  map if and only if $f:X_p\to Y_p$ is monotone and, for all $x\in X$ and $u\in[0,\infty]$, $f(x)+u\le f(x+u)$.
\end{proposition}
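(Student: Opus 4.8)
The plan is to reduce everything to one order-theoretic equivalence and then read off both implications from it. First I would record, for a tensored metric space $X=(X,d)$, the equivalence
\[
x+u\le y \quad\Longleftrightarrow\quad d(x,y)\le u
\]
(natural order of $[0,\infty]$ on the right-hand side), valid for all $x,y\in X$ and $u\in[0,\infty]$. This is merely the underlying Galois connection of the adjunction $x+(-)\dashv d(x,-)$ noted just before the statement, but I would prove it directly from the tensor identity $d(x+u,y)=d(x,y)\trunminus u$ and the definition of the underlying order ($z\le w$ iff $0\geqslant d(z,w)$): the relation $x+u\le y$ says $0\geqslant d(x,y)\trunminus u$, and since $\trunminus$ is valued in $[0,\infty]$ this holds exactly when $d(x,y)\trunminus u=0$, that is, when $d(x,y)\le u$. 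This equivalence carries the whole argument, so I would establish it cleanly at the outset.

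For the forward direction, assuming $f$ is a metric map, monotonicity is immediate from $0\geqslant d(x,y)\geqslant d'(f(x),f(y))$. For the tensor inequality I would work in $Y$: by the equivalence, $f(x)+u\le f(x+u)$ amounts to $d'(f(x),f(x+u))\le u$. Here $d'(f(x),f(x+u))\le d(x,x+u)$ because $f$ is a metric map, and $d(x,x+u)\le u$ follows from the equivalence in $X$ applied to the trivial relation $x+u\le x+u$; chaining the two bounds gives the claim.

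For the converse, assuming $f:X_p\to Y_p$ monotone and $f(x)+u\le f(x+u)$ for all $x,u$, I would fix $x,y$, set $u=d(x,y)$, and show $f(x)+u\le f(y)$, which by the equivalence in $Y$ is precisely $d'(f(x),f(y))\le d(x,y)$. The chain is: $d(x,y)\le u$ gives $x+u\le y$ (equivalence in $X$); monotonicity gives $f(x+u)\le f(y)$; and the hypothesis gives $f(x)+u\le f(x+u)$, so transitivity finishes it.

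The only genuinely delicate point is the bookkeeping with truncated subtraction and the clashing order conventions, namely $\le$ on the metric spaces against $\geqslant$ as the underlying order of $[0,\infty]$; infinite values need a brief sanity check, but $\infty\trunminus\infty=0$ keeps the equivalence intact. Once $x+u\le y\Leftrightarrow d(x,y)\le u$ is pinned down, both implications are mechanical, and I would stress that only tensoredness is used—neither cotensoredness nor any form of completeness enters the argument.
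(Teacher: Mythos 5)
Your proof is correct and follows essentially the same route as the paper: both recover the metric of a tensored space from its underlying order and tensors — you via the Galois connection $x+u\le y\iff d(x,y)\le u$, the paper via the formula $d(x,y)=\inf\{u\in[0,\infty]\mid x+u\le y\}$ — and the decisive chain $x+u\le y\Rightarrow f(x)+u\le f(x+u)\le f(y)$ appears identically in both arguments. The only cosmetic difference is that you instantiate at the attained value $u=d(x,y)$ (using that the infimum is a minimum), whereas the paper compares the two infima directly; this changes nothing substantive.
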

\begin{proof}
Every metric  map is also monotone with respect to the underlying orders and satisfies $f(x)+u\le f(x+u)$, for all $x\in X$ and $u\in[0,\infty]$. To see the reverse implication, recall that the metric $d$ on $X$ satisfies
\[
 d(x,y)=\inf\{u\in[0,\infty]\mid x+u\le y\},
\]
and for the metric $d'$ on $Y$ one has
\[
 d(f(x),f(y))=\inf\{v\in[0,\infty]\mid f(x)+v\le f(y)\}.
\]
If $x+u\le y$, then $f(x)+u\le f(x+u)\le f(y)$, and the assertion follows.
\end{proof}

\section{Metric compact Hausdorff spaces and approach spaces}

\subsection{Continuous lattices}\label{Subsect:ContLat}

Continuous lattices were introduced by D.\ Scott \citep{Sco_ContLat} as precisely those orders appearing as the \emph{specialisation order} of an injective topological T$_0$-space. Here, for an arbitrary topological space $X$ with topology $\calO$, the specialisation order $\le$ on $X$ is defined as
\[
 x\le y\quad\text{whenever}\quad\calO(x)\subseteq\calO(y),
\]
for all $x,y\in X$. This relation is always reflexive and transitive, and it is anti-symmetric if and only if $X$ is T$_0$. If $X$ is an injective T$_0$-space, then the ordered set $(X,\le)$ is actually complete and, for all $x\in X$,
\begin{equation*}
 x=\bigvee\{y\in X\mid y\ll x\}
\end{equation*}
(where $y\ll x$ whenever $x\le\bigvee D\,\Rw\,y\in D$ for every up-directed down-set $D\subseteq X$); in general, a complete separated ordered set with this property is called \emph{continuous lattice}. In this particular case the specialisation order contains all information about the topology of $X$: $A\subseteq X$ is open if and only if $A$ is unreachable by up-directed suprema in the sense that 
\begin{equation}\label{eq:ScottTopology}
 \bigvee D\in A\,\Rw\, D\cap A\neq\varnothing
\end{equation}
for every up-directed down-set $D\subseteq A$. Quite generally, \eqref{eq:ScottTopology} defines a topology on $X$ for any ordered set $X$, and a monotone map $f:X\to Y$ is continuous with respect to these topologies if and only if $f$ preserves all existing up-directed suprema. Furthermore, the specialisation order of this topology gives the original order back, and one obtains an injective topological T$_0$-space if and only if $X$ is a continuous lattice.

In the sequel we will consider topological spaces mostly via ultrafilter convergence, and therefore define the \emph{underlying order} $\le$ of a topological space as the ``point shadow'' of this convergence:
\begin{align*}
 x\le y &\quad\text{whenever}\quad\doo{x}\to y, &&(\doo{x}=\{A\subseteq X\mid x\in A\})
\end{align*}
which is dual to the specialisation order. Consequently, the underlying order of an injective topological T$_0$ space is an \emph{op-continuous lattice} meaning that $(X,\le)$ is complete and, for any $x\in X$,
\[
 x=\bigwedge\{y\in X\mid y\succ x\},
\]
where $y\succ x$ whenever $x\le\bigwedge D$ implies $y\in D$ for every down-directed up-set $D\subseteq X$. We also note that, with respect to the underlying order, the convergence relation of an injective T$_0$ space is given by
\begin{align*}
 \frx\to x &\iff \left(\bigwedge_{A\in\frx}\bigvee_{y\in A}y\right)\le x,
\end{align*}
for all ultrafilters $\frx$ on $X$ and all $x\in X$.

\subsection{Ordered compact Hausdorff spaces}\label{Subsect:OrdCompHaus}

The class of domains with arguably the most direct generalisation to metric spaces is that of stably compact spaces, or equivalently, \emph{ordered compact Hausdorff spaces}. The latter were introduced by \citep{Nach_TopOrd} as triples $(X,\le,\calO)$ where $(X,\le)$ is an ordered set (we do not assume anti-symmetry here) and $\calO$ is a compact Hausdorff topology on $X$ so that $\{(x,y)\mid x\le y\}$ is closed in $X\times X$. A morphism of ordered compact Hausdorff spaces is a map $f:X\to Y$ which is both monotone and continuous. The resulting category of ordered compact Hausdorff space and morphisms we denote as $\ORDCH$. If $(X,\le,\calO)$ is an ordered compact Hausdorff spaces, then the dual order $\le^\circ$ on $X$ together with the topology $\calO$ defines an ordered compact Hausdorff spaces $(X,\le^\circ,\calO)$, and one obtains a functor $(-)^\op:\ORDCH\to\ORDCH$ which commutes with the canonical forgetful functor $\ORDCH\to\SET$.

Analogously to the fact that compact Hausdorff spaces and continuous maps form an algebraic category over $\SET$ via ultrafilter convergence $UX\to X$ \citep{Man_TriplCompAlg}, it is shown in \citep{Fla97a} that the full subcategory $\ORDCH_\sep$ of $\ORDCH$ defined by those spaces with anti-symmetric order is the category of Eilenberg-Moore algebras for the prime filter monad of up-sets on $\ORD_\sep$. The situation does not change much when we drop anti-symmetry, in \citep{Tho_OrderedTopStr} it is shown that ordered compact Hausdorff spaces are precisely the Eilenberg-Moore algebras for the ultrafilter monad $\mU=\umonad$ suitably defined on $\ORD$. Here the functor $U:\ORD\to\ORD$ sends an ordered set $X=(X,\le)$ to the set $UX$ of all ultrafilters on the set $X$ equipped with the order relation
\[
\frx\le\fry\quad\text{ whenever }\quad \forall A\in\frx,B\in\fry\,\exists x\in A,y\in B\,.\,x\le y;
\qquad (\frx,\fry\in UX)
\]
and the maps
\begin{align*}
 e_X:X &\to UX & m_X:UUX &\to UX\\
x &\mapsto \doo{x}:=\{A\subseteq X\mid x\in A\} &
\frX &\mapsto\{A\subseteq X\mid A^\#\in\frX\} 
\end{align*}
(where $A^\#:=\{\frx\in UX\mid A\in\frx\}$) are monotone with respect to this order relation. Then, for $\alpha:UX\to X$ denoting the convergence of the compact Hausdorff topology $\calO$, $(X,\le,\calO)$ is an ordered compact Hausdorff space if and only if $\alpha:U(X,\le)\to(X,\le)$ is monotone.

\subsection{Metric compact Hausdorff spaces}

The presentation in \citep{Tho_OrderedTopStr} is even more general and gives also an extension of the ultrafilter monad $\mU$ to $\MET$. For a metric space $X=(X,d)$ and ultrafilters $\frx,\fry\in UX$, one defines a distance
\[
Ud(\frx,\fry)=\sup_{A\in\frx,B\in\fry}\inf_{x\in A,y\in B}d(x,y)
\]
and turns this way $UX$ into a metric space. Then $e_X:X\to UX$ and $m_X:UUX\to UX$ are metric maps and $Uf:UX\to UY$ is a metric map if $f:X\to Y$ is so. Not surprisingly, we call an Eilenberg--Moore algebra for this monad \emph{metric compact Hausdorff space}. Such a space can be described as a triple $(X,d,\alpha)$ where $(X,d)$ is a metric space and $\alpha$ is (the convergence relation of) a compact Hausdorff topology on $X$ so that $\alpha:U(X,d)\to(X,d)$ is a metric map. We denote the category of metric compact Hausdorff spaces and morphisms (i.e.\ maps which are both metric maps and continuous) as $\METCH$. The operation ``taking the dual metric space'' lifts to an endofunctor $(-)^\op:\METCH\to\METCH$ where $X^\op:=(X,d^\circ,\alpha)$, for every metric compact Hausdorff space $X=(X,d,\alpha)$.

\begin{example}\label{Ex:PasMCHsp}
The metric space $[0,\infty]$ with metric $\mu(u,v)=v\trunminus u$ becomes a metric compact Hausdorff space with the Euclidean compact Hausdorff topology whose convergence is given by
\[
\xi(\frv)=\sup_{A\in\frv}\inf_{v\in A}v,
\]
for $\frv\in U[0,\infty]$. Consequently, $[0,\infty]^\op$ denotes the metric compact Hausdorff space $([0,\infty],\mu^\circ,\xi)$ with the same compact Hausdorff topology on $[0,\infty]$ and with the metric $\mu^\circ(u,v)=u\trunminus v$.
\end{example}
\begin{lemma}\label{Lem:UXisTensored}
If $(X,d)$ is a tensored metric space, then $(UX,Ud)$ is tensored too.
\end{lemma}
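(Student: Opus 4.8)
The plan is to produce the tensor in $(UX,Ud)$ as the pushforward ultrafilter along the tensoring map of $X$. Since $X$ is tensored, the preceding discussion provides a metric map $X\oplus[0,\infty]\to X$, $(x,u)\mapsto x+u$; fixing $u$ and precomposing with the isometric embedding $X\to X\oplus[0,\infty]$, $x\mapsto(x,u)$, we obtain for each $u\in[0,\infty]$ a metric map $g_u:=(-)+u:X\to X$. As $U:\MET\to\MET$ is a functor, $Ug_u:UX\to UX$ is again a metric map, and I would define $\frx+u:=Ug_u(\frx)$, the image ultrafilter $\{C\subseteq X\mid g_u^{-1}(C)\in\frx\}$. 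It then remains to verify the defining property of a tensor, namely $Ud(\frx+u,\fry)=Ud(\frx,\fry)\trunminus u$ for all $\fry\in UX$.

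The computation rests on one auxiliary identity, which I would isolate as a lemma: for any metric map $f:X\to Y$, any $\frx\in UX$ and $\fry\in UY$,
\[
 Ud(Uf(\frx),\fry)=\sup_{A\in\frx,\,B\in\fry}\inf_{x\in A,\,y\in B}d(f(x),y).
\]
To prove $\le$, given $A'\in Uf(\frx)$ put $A:=f^{-1}(A')\in\frx$; since $f(A)\subseteq A'$ the inner infimum over $A'$ is dominated by the one over $f(A)$, which equals $\inf_{x\in A,\,y\in B}d(f(x),y)$. For $\ge$, given $A\in\frx$ put $A':=f(A)$; then $f^{-1}(A')\supseteq A\in\frx$, so $A'\in Uf(\frx)$, and the two inner infima agree. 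This bookkeeping with images and preimages of the pushforward filter is the only genuinely technical point, and I expect it to be the main obstacle, since one must keep straight the asymmetry between the two ultrafilters $\frx$ (pushed forward) and $\fry$ (untouched).

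With the lemma in hand the result is formal. Applying it to $f=g_u$ gives
\[
 Ud(\frx+u,\fry)=\sup_{A\in\frx,\,B\in\fry}\inf_{x\in A,\,y\in B}d(x+u,y).
\]
By the defining property of the tensor in $X$ one has $d(x+u,y)=d(x,y)\trunminus u$, and since $v\mapsto v\trunminus u$ is monotone and continuous it commutes with both the infimum and the supremum. Pulling $\trunminus u$ outside therefore yields
\[
 Ud(\frx+u,\fry)=\Bigl(\sup_{A\in\frx,\,B\in\fry}\inf_{x\in A,\,y\in B}d(x,y)\Bigr)\trunminus u=Ud(\frx,\fry)\trunminus u,
\]
which is exactly the characterisation of $\frx+u$ as the tensor of $\frx$ by $u$ in $(UX,Ud)$. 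Hence $(UX,Ud)$ is tensored.
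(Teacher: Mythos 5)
Your proof is correct and takes essentially the same approach as the paper: both define $\frx+u$ as the image ultrafilter of $\frx$ under the tensoring map $(-)+u:X\to X$ and then compute $Ud(\frx+u,\fry)$ by pulling $\trunminus u$ through the sup-inf formula, using that $-\trunminus u$ preserves suprema and non-empty infima. The only difference is that you explicitly isolate and verify the pushforward identity $Ud(Uf(\frx),\fry)=\sup_{A\in\frx,B\in\fry}\inf_{x\in A,y\in B}d(f(x),y)$, which the paper's first displayed equality treats as immediate.
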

\begin{proof}
For $u\in[0,\infty]$ and $\frx\in UX$, put $\frx+u=U(t_u)(\frx)$ where $t_u:X\to X$ sends $x\in X$ to (a choice of) $x+u$. Then
\begin{align*}
 Ud(\frx+u,\fry)&=\sup_{A\in\frx,B\in\fry}\inf_{x\in A,y\in B}d(x+u,y)\\
&=\left(\sup_{A\in\frx,B\in\fry}\inf_{x\in A,y\in B}d(x,y)\right)\trunminus u\\
&=Ud(\frx,\fry)\trunminus u,
\end{align*}
for all $\fry\in UX$. Here we use the fact that $-\trunminus u:[0,\infty]\to[0,\infty]$ preserves all suprema and non-empty infima.
\end{proof}
Clearly, if $f:X\to Y$ is a tensor preserving map between tensored metric spaces, then $Uf(\frx+u)\simeq Uf(\frx)+u$, hence $U:\MET\to\MET$ restricts to an endofunctor on the category $\MET_+$ of tensored metric spaces and tensor preserving maps.

\subsection{Stably compact topological spaces}\label{Subsect:StablyCompSp}

As we have already indicated at the beginning of Subsection \ref{Subsect:OrdCompHaus}, (anti-symmetric) ordered compact Hausdorff spaces can be equivalently seen as special topological spaces. In fact, both structures of an ordered compact Hausdorff space $X=(X,\le,\calO)$ can be combined to form a topology on $X$ whose opens are precisely those elements of $\calO$ which are down-sets in $(X,\le)$, and this procedure defines indeed a functor $K:\ORDCH\to\TOP$. An ultrafilter $\frx\in UX$ converges to a point $x\in X$ with respect to this new topology if and only if $\alpha(\frx)\le x$, where $\alpha:UX\to X$ denotes the convergence of $(X,\calO)$. Hence, $\le$ is just the underlying order of $\calO$ and $\alpha(\frx)$ is a smallest convergence point of $\frx\in UX$ with respect to this order. From that it follows at once that we can recover both $\le$ and $\alpha$ from $\calO$. To be rigorous, this is true when $(X,\le)$ is anti-symmetric, in the general case $\alpha$ is determined only up to equivalence. In any case, we define the dual of a topological space $Y$ of the form $Y=K(X,\le,\alpha)$ as $Y^\op=K(X,\le^\circ,\alpha)$, and note that equivalent maps $\alpha$ lead to the same space $Y^\op$.

A T$_0$ space $X=(X,\calO)$ comes from a anti-symmetric ordered compact Hausdorff space precisely if $X$ is \emph{stably compact}, that is, $X$ is sober, locally compact and stable. The latter property can be defined in different manners, we use here the one given in \citep{Sim82a}: $X$ is stable if, for open subsets $U_1,\ldots,U_n$ and $V_1,\ldots,V_n$ ($n\in\N$) of $X$ with $U_i\ll V_i$ for each $1\le i\le n$, also $\bigcap_i U_i\ll\bigcap_i V_i$. As usual, it is enough to require stability under empty and binary intersections, and stability under empty intersection translates to compactness of $X$. Also note that a T$_0$ space is locally compact if and only it is exponentiable in $\TOP$. It is also shown in \citep[Lemma 3.7]{Sim82a} that, for $X$ exponentiable, $X$ is stable if and only if, for every ultrafilter $\frx\in UX$, the set of all limit points of $\frx$ is irreducible\footnote{Actually, Lemma 3.7 of \citep{Sim82a} states only one implication, but the other is obvious and even true without assuming exponentiability.}. For a nice introduction to these kinds of spaces we refer to \citep{Jung04}. 

If we start with a metric compact Hausdorff space $X=(X,d,\alpha)$ instead, the construction above produces, for every $\frx\in UX$ and $x\in X$, the \emph{value of convergence}
\begin{equation}\label{Eq:ValConvMetCompHaus}
 a(\frx,x)=d(\alpha(\frx),x)\in[0,\infty],
\end{equation}
which brings us into the realm of

\subsection{Approach spaces}\label{Subsect:App}

We will here give a quick overview of \emph{approach spaces} which were introduced in \citep{Low_Ap} and are extensively described in \citep{Low_ApBook}. An approach space is typically defined as a pair $(X,\delta)$ consisting of a set $X$ and an \emph{approach distance} $\delta$ on $X$, that is, a function $\delta:X\times \two^X\to[0,\infty]$ satisfying
\begin{enumerate}
\item $\delta(x,\{x\})=0$,
\item $\delta(x,\varnothing)=\infty$,
\item $\delta(x,A\cup B)=\min\{\delta(x,A),\delta(x,B)\}$,
\item $\delta(x,A)\leqslant\delta(x,A^{(\eps)})+\eps$, where $A^{(\eps)}=\{x\in X\mid \delta(x,A)\leqslant\eps\}$,
\end{enumerate} 
for all $A,B\subseteq X$, $x\in X$ and $\eps\in[0,\infty]$. For $\delta:X\times \two^X\to[0,\infty]$ and $\delta':Y\times\two^Y\to[0,\infty]$, a map $f:X\to Y$ is called \emph{approach map} $f:(X,\delta)\to(Y,\delta')$ if $\delta(x,A)\geqslant\delta'(f(x),f(A))$, for every $A\subseteq X$ and $x\in X$. Approach spaces and approach maps are the objects and morphisms of the category $\AP$. The canonical forgetful functor
\[
 \AP\to\SET
\]
is topological, hence $\AP$ is complete and cocomplete and $\AP\to\SET$ preserves both limits and colimits. Furthermore, the functor $\AP\to\SET$ factors through $\TOP\to\SET$ where $(-)_p:\AP\to\TOP$ sends an approach space $(X,\delta)$ to the topological space with the same underlying set $X$ and with 
\[
x\in\overline{A} \text{ whenever } \delta(x,A)=0.
\]
This functor has a left adjoint $\TOP\to\AP$ which one obtains by interpreting the closure operator of a topological space $X$ as
\[
 \delta(x,A)=
\begin{cases}
 0 & \text{if $x\in\overline{A}$,}\\
\infty & \text{else.}
\end{cases}
\]
In fact, the image of this functor can be described as precisely those approach spaces where $\delta(x,A)\in\{0,\infty\}$, for all $x\in X$ and $A\subseteq X$. Being left adjoint, $\TOP\to\AP$ preserves all colimits, and it is not hard to see that this functor preserves also all limits (and hence has a left adjoint).

As in the case of topological spaces, approach spaces can be described in terms of many other concepts such as ``closed sets'' or convergence. For instance, every approach distance $\delta:X\times\two^X\to[0,\infty]$ defines a map
\[
a:UX\times X\to[0,\infty],\,a(\frx,x)=\sup_{A\in\frx}\delta(x,A),
\]
and vice versa, every $a:UX\times X\to[0,\infty]$ defines a function
\[
\delta:X\times\two^X\to[0,\infty],\,\delta(x,A)=\inf_{A\in\frx}a(\frx,x).
\]
Furthermore, a mapping $f:X\to Y$ between approach spaces $X=(X,a)$ and $Y=(Y,b)$ is an approach map if and only if $a(\frx,x)\geqslant b(Uf(\frx),f(x))$, for all $\frx\in UX$ and $x\in X$. Therefore one might take as well convergence as primitive notion, and axioms characterising those functions $a:UX\times X\to[0,\infty]$ coming from a approach distance can be already found in \citep{Low_Ap}. In this paper we will make use the characterisation (given in \citep{CH_TopFeat}) as precisely the functions $a:UX\times X\to[0,\infty]$ satisfying 
\begin{align}\label{Eq:AxiomsApp}
0\geqslant a(\doo{x},x) &&\text{and}&& Ua(\frX,\frx)+a(\frx,x)\geqslant a(m_X(\frX),x),
\end{align}
where $\frX\in UUX$, $\frx\in UX$, $x\in X$ and
\[
 Ua(\frX,\frx)=\sup_{\calA\in\frX,A\in\frx}\inf_{\fra\in\calA,x\in A}a(\fra,x).
\]
In the language of convergence, the underlying topological space $X_p$ of an approach space $X=(X,a)$ is defined by  $\frx\to x\iff a(\frx,x)=0$, and a topological space $X$ can be interpreted as an approach space by putting $a(\frx,x)=0$ whenever $\frx\to x$ and $a(\frx,x)=\infty$ otherwise.

We can restrict $a:UX\times X\to[0,\infty]$ to principal ultrafilters and obtain a metric
\[
 a_0:X\times X\to[0,\infty],\,(x,y)\mapsto a(\doo{x},y)
\]
on $X$. Certainly, an approach map is also a metric map, therefore this construction defines a functor
\[
 (-)_0:\AP\to\MET.
\]
which, combined with $(-)_p:\MET\to\ORD$, yields a functor
\[
 \AP\to\ORD
\]
where $x\le y$ whenever $0\geqslant a(\doo{x},y)$. This order relation extends point-wise to approach maps, and we can consider $\AP$ as an ordered category. As before, this additional structure allows us to speak about adjunction in $\AP$: for approach maps $f:(X,a)\to (X',a')$ and $g:(X',a')\to(X,a)$, $f\dashv g$ if $1_X\le g\cdot f$ and $f\cdot g\le 1_{X'}$; equivalently, $f\dashv g$ if and only if, for all $\frx\in UX$ and $x'\in X'$,
\[
 a'(Uf(\frx),x')=a(\frx,g(x')).
\]
One calls an approach space $X=(X,a)$ \emph{separated}, or T$_0$, if the underlying topology of $X$ is T$_0$, or, equivalently, if the underlying metric of $X$ is separated. Note that this is the case precisely if, for all $x,y\in X$, $a(\doo{x},y)=0=a(\doo{y},x)$ implies $x=y$. 

Similarly to the situation for metric spaces, besides the categorical product there is a further approach structure on the set
\[
 X\times Y
\]
for approach spaces $X=(X,a)$ and $Y=(Y,b)$, namely
\[
 c(\frw,(x,y))=a(\frx,x)+b(\fry,y)
\]
where $\frw\in U(X\times Y)$, $(x,y)\in X\times Y$ and $\frx=U\pi_1(\frw)$ and $\fry=U\pi_2(\frw)$. The resulting approach space $(X\times Y,c)$ we denote as $X\oplus Y$, in fact, one obtains a functor $\oplus:\AP\times\AP\to\AP$. We also note that $1\oplus X\simeq X\simeq X\oplus 1$, for every approach space $X$.

Unfortunately, the above described monoidal structure on $\AP$ is not closed, the functor $X\oplus-:\AP\to\AP$ does not have in general a right adjoint (see \citep{Hof_TopTh}). If it does, we say that the approach space $X=(X,a)$ is \emph{$+$-exponentiable} and denote this right adjoint as $(-)^X:\AP\to\AP$. Then, for any approach space $Y=(Y,b)$, the space $Y^X$ can be chosen as the set of all approach maps of type $X\to Y$, equipped with the convergence
\begin{equation}\label{Eq:FunSpStr}
\fspstr{\frp}{h}=\sup\{b(U\!\ev(\frw),h(x))\trunminus a(\frx,x)\mid x\in X,\frw\in U(Y^X\oplus X)\text{ with }\frw\mapsto\frp, (\frw\mapsto\frx)\},
\end{equation}
for all $\frp\in U(Y^X)$ and $h\in Y^X$. If $\frp=\doo{k}$ for some $k\in Y^X$, then
\[
 \fspstr{\doo{k}}{h}=\sup_{x\in X}b_0(k(x),h(x)),
\]
which tells us that $(Y^X)_0$ is a subspace of $Y^{X_0}$. If $X=(X,a)$ happens to be topological, i.e.\ $a$ only takes values in $\{0,\infty\}$, then \eqref{Eq:FunSpStr} simplifies to
\[
 \fspstr{\frp}{h}=\sup\{b(U\!\ev(\frw),h(x))\mid x\in X,\frw\in U(Y^X\oplus X)\text{ with }\frw\mapsto\frp, (\frw\mapsto\frx),a(\frx,x)=0\}.
\]
Furthermore, a topological approach space is $+$-exponentiable if and only if it is exponentiable in $\TOP$, that is, core-compact. This follows for instance from the characterisation of exponentiable topological spaces given in \citep{Pis_ExpTop}, together with the characterisation of $+$-exponentiable approach spaces \citep{Hof_TopTh} as precisely the ones where the convergence structure $a:UX\times X\to[0,\infty]$ satisfies
\[
 a(m_X(\frX),x)=\inf_{\frx\in X}(Ua(\frX,\frx)+a(\frx,x)),
\]
for all $\frX\in UUX$ and $x\in X$. Note that the left hand side is always smaller or equal to the right hand side.

Via the embedding $\TOP\to\AP$ described earlier in this subsection, which is left adjoint to $(-)_p:\AP\to\TOP$, we can interpret every topological space $X$ as an approach space, also denoted as $X$, where the convergence structure takes only values in $\{0,\infty\}$. Then, for any approach space $Y$, $X\oplus Y=X\times Y$, which in particular tells us that the diagram
\[
 \xymatrix{\AP\ar[r]^{X\oplus-} & \AP\\
\TOP\ar[u]\ar[r]_{X\times-} &\TOP\ar[u]}
\]
commutes. Therefore, if $X$ is core-compact, then also the diagram of the corresponding right adjoints commutes, hence
\begin{lemma}\label{Lem:PowerVsUnderlyingTop}
For every core-compact topological space $X$ and every approach space $Y$, $(Y^X)_p=(Y_p)^X$.
\end{lemma}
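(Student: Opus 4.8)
The plan is to deduce the statement from the commutative square of left adjoints recorded immediately before the lemma, simply by passing to right adjoints. Write $E\colon\TOP\to\AP$ for the embedding left adjoint to $(-)_p\colon\AP\to\TOP$. The three relevant adjunctions are $E\dashv(-)_p$, the $+$-exponential adjunction $X\oplus-\dashv(-)^X$ in $\AP$ (available because a core-compact topological space is $+$-exponentiable), and the ordinary exponential adjunction $X\times-\dashv(-)^X$ in $\TOP$ (available because core-compact means exponentiable in $\TOP$). The excerpt has already established that the square of left adjoints commutes, i.e.\ $(X\oplus-)\circ E=E\circ(X\times-)$ as functors $\TOP\to\AP$, this resting on the identity $X\oplus Y=X\times Y$ for topological $X$.

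First I would compute the two right adjoints. Since the right adjoint of a composite is the composite of the right adjoints in the opposite order, the right adjoint of $(X\oplus-)\circ E$ is $(-)_p\circ(-)^X$, that is $Y\mapsto(Y^X)_p$, while the right adjoint of $E\circ(X\times-)$ is $(-)^X\circ(-)_p$, that is $Y\mapsto(Y_p)^X$. Because adjoints are unique up to natural isomorphism, the equality of the two left-adjoint composites forces a natural isomorphism $(Y^X)_p\cong(Y_p)^X$, natural in the approach space $Y$. This is precisely the homeomorphism asserted by the lemma. Note that core-compactness is used twice here, once to guarantee the approach power $(-)^X$ on $\AP$ and once to guarantee the topological power $(-)^X$ on $\TOP$, so that all four right adjoints of the mate square are actually defined.

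The hard part, such as it is, will be the passage from ``naturally isomorphic'' to the literal equality stated. For this I would observe that every functor in sight lies over $\SET$: the underlying set of $Y^X$ is the set of approach maps $EX\to Y$, the underlying set of $(Y_p)^X$ is the set of continuous maps $X\to Y_p$, and these two sets coincide via the bijection $\AP(EX,Y)\cong\TOP(X,Y_p)$ furnished by $E\dashv(-)_p$. Concretely, unravelling the approach-map condition against the $\{0,\infty\}$-valued convergence of $EX$ shows that a set map $f$ is an approach map $EX\to Y$ exactly when it is continuous $X\to Y_p$, so the two underlying sets of maps are literally the same and the bijection is carried by the identity ($E$ being fully faithful, its unit is an identity). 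Hence the mate isomorphism is the identity on this common underlying set, and the two topologies on it must agree, giving $(Y^X)_p=(Y_p)^X$. The argument is thus essentially formal, and I would keep the emphasis on making this last ``over $\SET$'' point explicit rather than on any genuine computation.
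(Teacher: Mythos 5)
Your proof is correct and follows essentially the same route as the paper: the commuting square of left adjoints (resting on $X\oplus Y=X\times Y$ for topological $X$) is transported to the corresponding right adjoints, yielding $(Y^X)_p\simeq(Y_p)^X$. Your closing ``over $\SET$'' argument upgrading the natural isomorphism to literal equality is a fleshed-out version of the paper's own remark, which makes the same point by noting that with the canonical choices of $(-)^X$ and $(-)_p$ the two composites agree on the nose.
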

\begin{remark}
To be rigorous, the argument presented above only allows us to conclude $(Y^X)_p\simeq(Y_p)^X$. However, since we can choose the right adjoints $(-)^X$ and $(-)_p$ exactly as described earlier, one has indeed equality.
\end{remark}

The lack of good function spaces can be overcome by moving into a larger category where these constructions can be carried out. In the particular case of approach spaces, a good environment for doing so is the category $\PSAP$ of \emph{pseudo-approach spaces} and approach maps \citep{LL_TopHulls}. Here a pseudo-approach space is pair $X=(X,a)$ consisting of a set $X$ and a convergence structure $a:UX\times X\to[0,\infty]$ which only needs to satisfy the first inequality of \eqref{Eq:AxiomsApp}: $0\geqslant a(\doo{x},x)$, for all $x\in X$. If $X=(X,a)$ and $Y=(Y,b)$ are pseudo-approach spaces, then one defines $X\oplus Y$ exactly as for approach spaces, and the formula \eqref{Eq:FunSpStr} defines a pseudo-approach structure on the set $Y^X$ of all approach maps from $X$ to $Y$, without any further assumptions on $X$ or $Y$. In fact, this construction leads now to an adjunction $X\oplus-\dashv(-)^X:\PSAP\to\PSAP$, for every pseudo-approach space $X=(X,a)$.

\subsection{Stably compact approach spaces}

Returning to metric compact Hausdorff spaces, one easily verifies that \eqref{Eq:ValConvMetCompHaus} defines an approach structure on $X$ (see \citep{Tho_OrderedTopStr}, for instance). Since a homomorphism between metric compact Hausdorff spaces becomes an approach map with respect to the corresponding approach structures, one obtains a functor
\[
K:\METCH\to\AP.
\]
The underlying metric of $KX$ is just the metric $d$ of the metric compact Hausdorff space $X=(X,d,\alpha)$, and $x=\alpha(\frx)$ is a \emph{generic convergence point} of $\frx$ in $KX$ in the sense that
\[
 a(\frx,y)=d(x,y),
\]
for all $y\in X$. The point $x$ is unique up to equivalence since, if one has $x'\in X$ with the same property, then
\[
 d(x,x')=a(\frx,x')=d(x',x')=0
\]
and, similarly, $d(x',x)=0$. In analogy to the topological case, we introduce the \emph{dual} $Y^\op$ of an approach space $Y=K(X,d,\alpha)$ as $Y^\op=K(X,d^\circ,\alpha)$, and we call an T$_0$ approach space \emph{stably compact} if it is of the form $KX$, for some metric compact Hausdorff space $X$.
\begin{lemma}\label{Lem:MapsMetCompSp}
Let $(X,d,\alpha)$, $(Y,d',\beta)$ be metric compact Hausdorff spaces with corresponding approach spaces $(X,a)$ and $(Y,b)$, and let $f:X\to Y$ be a map. Then $f$ is an approach map $f:(X,a)\to(Y,b)$ if and only if $f:(X,d)\to(Y,d')$ is a metric map and $\beta\cdot Uf(\frx)\le f\cdot\alpha(\frx)$, for all $\frx\in UX$.
\end{lemma}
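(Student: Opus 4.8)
The plan is to translate both sides of the claimed equivalence into inequalities about the two metrics via the defining identity $a(\frx,x)=d(\alpha(\frx),x)$ (and likewise $b(\fry,y)=d'(\beta(\fry),y)$), so that the approach-map inequality $d(\alpha(\frx),x)\geqslant d'(\beta(Uf(\frx)),f(x))$, required for all $\frx\in UX$ and $x\in X$, is matched against the conjunction of the metric-map and the order condition. Beyond this translation the only tools required are reflexivity ($d(z,z)=0$), the triangle inequality for $d'$, and the fact---already recorded in the text---that an approach map is automatically a metric map for the underlying metrics, whose value on principal ultrafilters is $d$.

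For the forward direction, the metric-map property of $f$ is thus immediate. To extract the order condition I would specialise the point in the approach inequality to $x=\alpha(\frx)$; since $d(\alpha(\frx),\alpha(\frx))=0$ the inequality collapses to $0\geqslant d'(\beta(Uf(\frx)),f(\alpha(\frx)))$, which is precisely the statement $\beta\cdot Uf(\frx)\le f\cdot\alpha(\frx)$.

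For the converse I would rebuild the approach inequality from the two hypotheses by one application of the triangle inequality in $(Y,d')$. The order condition reads $d'(\beta(Uf(\frx)),f(\alpha(\frx)))=0$, whence $d'(\beta(Uf(\frx)),f(x))\le d'(f(\alpha(\frx)),f(x))$; the metric-map property then bounds $d'(f(\alpha(\frx)),f(x))$ above by $d(\alpha(\frx),x)$. Chaining the two steps gives $d(\alpha(\frx),x)\geqslant d'(\beta(Uf(\frx)),f(x))$ for all $\frx$ and $x$, which is the approach-map condition.

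No genuine obstacle arises: the argument is a short unwinding of definitions. The only points that deserve attention are the recognition that the underlying-order relation $\beta\cdot Uf(\frx)\le f\cdot\alpha(\frx)$ is nothing but the vanishing $0\geqslant d'(\beta(Uf(\frx)),f(\alpha(\frx)))$, and---if one prefers a self-contained proof of the metric-map part rather than citing it---the bookkeeping of the algebra identities $\alpha\cdot e_X=1_X$, $\beta\cdot e_Y=1_Y$ together with the naturality $Uf\cdot e_X=e_Y\cdot f$, which give $Uf(\doo{x})=\doo{f(x)}$ and reduce the approach inequality at $\frx=\doo{x}$ to $d(x,y)\geqslant d'(f(x),f(y))$. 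The genuine content is simply the well-chosen specialisation $x=\alpha(\frx)$ and the symmetric triangle-inequality step that reverses the implication.
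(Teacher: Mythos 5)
Your proposal is correct and follows essentially the same route as the paper's proof: the converse direction is the same two-step chain $d(\alpha(\frx),x)\geqslant d'(f\cdot\alpha(\frx),f(x))\geqslant d'(\beta\cdot Uf(\frx),f(x))$ (the paper leaves the triangle-inequality justification of the second step implicit), and the forward direction uses exactly the same specialisation $x=\alpha(\frx)$ together with the observation that approach maps restrict to metric maps on principal ultrafilters. No gap to report.
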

\begin{proof}
Assume first that $f:(X,d)\to(Y,d')$ is in $\MET$ and that $\beta\cdot Uf(\frx)\le f\cdot\alpha(\frx)$, for all $\frx\in UX$. Then
\[
 a(\frx,x)=d(\alpha(\frx),x)\geqslant d'(f\cdot\alpha(\frx),f(x))
\geqslant d'(\beta\cdot Uf(\frx),f(x))=b(Uf(\frx),f(x)).
\]
Suppose now that $f:(X,a)\to(Y,b)$ is in $\AP$ and let $\frx\in UX$. Then
\[
 0\geqslant d(\alpha(\frx),\alpha(\frx))=a(\frx,\alpha(\frx))\geqslant b(Uf(\frx),f\cdot \alpha(\frx))= d'(\beta\cdot Uf(\frx),f\cdot\alpha(\frx)).
\]
Clearly, $f:(X,a)\to(Y,b)$ in $\AP$ implies $f:(X,d)\to(Y,d')$ in $\MET$, and the assertion follows.
\end{proof}
It is an important fact that $K$ has a left adjoint
\[
M:\AP\to\METCH
\]
which can be described as follows (see \citep{Hof_DualityDistSp}). For an approach space $X=(X,a)$, $MX$ is the metric compact Hausdorff space with underlying set $UX$ equipped with the compact Hausdorff convergence $m_X:UUX\to UX$ and the metric
\begin{equation}\label{Eq:MetricOnUX}
 d:UX\times UX\to[0,\infty],\
(\frx,\fry)\mapsto\inf\{\eps\in[0,\infty]\mid \forall A\in\frx\,.\,A^{(\eps)}\in\fry\},
\end{equation}
and $Mf:=Uf:UX\to UY$ is a homomorphism between metric compact Hausdorff spaces provided that $f:X\to Y$ is an approach map between approach spaces. The unit and the counit of this adjunction are given by
\begin{align*}
e_X:(X,a)\to(UX,d(m_X(-),-)) &&\text{and}&&
\alpha:(UX,d,m_X)\to(X,d,\alpha)
\end{align*}
respectively, for $(X,a)$ in $\AP$ and $(X,d,\alpha)$ in $\METCH$.
\begin{remark}
All what was said here about metric compact Hausdorff spaces and approach space can be repeated, \emph{mutatis mutandis}, for ordered compact Hausdorff spaces and topological spaces. For instance, the funcor $K:\ORDCH\to\TOP$ (see Subsection \ref{Subsect:StablyCompSp}) has a left adjoint $M:\TOP\to\ORDCH$ which sends a topological space $X$ to $(UX,\le,m_X)$, where
\[
 \frx\le\fry\quad\text{whenever}\quad\forall A\in\frx\,.\,\overline{A}\in\fry,
\]
for all $\frx,\fry\in UX$. Furthermore, Lemma \ref{Lem:MapsMetCompSp} reads now as follows: Let $(X,\le,\alpha)$, $(Y,\le,\beta)$ be ordered compact Hausdorff spaces with corresponding topological spaces $(X,a)$ and $(Y,b)$, and let $f:X\to Y$ be a map. Then $f$ is a continuous map $f:(X,a)\to(Y,b)$ in $\TOP$ if and only if $f:(X,d)\to(Y,d')$ is in $\ORD$ and $\beta\cdot Uf(\frx)\le f\cdot\alpha(\frx)$, for all $\frx\in UX$.
\end{remark}

\begin{example}
The ordered set $\two=\{0,1\}$ with the discrete (compact Hausdorff) topology becomes an ordered compact Hausdorff space which induces the Sierpi\'nski space $\two$ where $\{1\}$ is closed and $\{0\}$ is open. Then the maps
\begin{enumerate}
\item $\bigwedge:\two^I\to\two$
\item $v\Rw-:\two\to\two$,
\item $v\wedge-:\two\to\two$
\setcounter{counter}{\value{enumi}}
\end{enumerate}
are continuous, for every set $I$ and $v\in\two$. Furthermore (see \citep{Nach_CompUnions,Esc_Synth}),
\begin{enumerate}
\setcounter{enumi}{\value{counter}}
\item $\bigvee:\two^I\to\two$ is continuous if and only if $I$ is a compact topological space.
\end{enumerate}
Here the function space $\two^I$ is possibly calculated in the category $\PSTOP$ of pseudotopological spaces (see \citep{HLCS_ImpTop}). In particular, if $I$ is a compact Hausdorff space, then $I$ is exponentiable in $\TOP$ and $\bigvee:\two^I\to\two$ belongs to $\TOP$.
\end{example}
\begin{example}
The metric space $[0,\infty]$ with distance $\mu(x,y)=y\trunminus x$ equipped with the Euclidean compact Hausdorff topology where $\frx$ converges to $\xi(\frx):=\sup_{A\in\frx}\inf A$ is a metric compact Hausdorff space (see Example \ref{Ex:PasMCHsp}) which gives the ``Sierpi\'nski approach space'' $[0,\infty]$ with approach convergence structure $\lambda(\frx,x)=x\trunminus\xi(\frx)$. Then, with the help of subsection \ref{Subsect:Prelim}, one sees that
\begin{enumerate}
\item $\sup:[0,\infty]^I\to[0,\infty]$,
\item $-\trunminus v:[0,\infty]\to[0,\infty]$,
\item $-+v:[0,\infty]\to[0,\infty]$
\setcounter{counter}{\value{enumi}}
\end{enumerate}
are approach maps, for every set $I$ and $v\in[0,\infty]$. If $I$ carries the structure $a:UI\times I\to[0,\infty]$ of an approach space, one defines the \emph{degree of compactness} \citep{Low_ApBook} of $I$ as
\[
 \comp(I)=\sup_{\frx\in UI}\inf_{x\in X}a(\frx,x).
\]
Then (see \citep{Hof_TopTh}), 
\begin{enumerate}
\setcounter{enumi}{\value{counter}}
\item $\inf:[0,\infty]^I\to[0,\infty]$ is an approach map if and only if $\comp(I)=0$.
\end{enumerate}
As above, the function space $[0,\infty]^I$ is possibly calculated in $\PSAP$, in fact, $(-)^I:\PSAP\to\PSAP$ is the right adjoint of $I\oplus-:\PSAP\to\PSAP$.
\end{example}

As any adjunction, $M\dashv K$ induces a monad on $\AP$ (respectively on $\TOP$). Here, for any approach space $X$, the space $KM(X)$ is the set $UX$ of all ultrafilters on the set $X$ equipped with an apporach structure, and the unit and the multiplication are essentially the ones of the ultrafilter monad. Therefore we denote this monad also as $\mU=\umonad$. In particular, one obtains a functor $U:=KM:\AP\to\AP$ (respectively $U:=KM:\TOP\to\TOP$). Surprisingly or not, the categories of algebras are equivalent to the Eilenberg--Moore categories on $\ORD$ and $\MET$:
\begin{align*}
\ORD^\mU\simeq\TOP^\mU &&\text{and}&& \MET^\mU\simeq\AP^\mU.
\end{align*}
More in detail (see \citep{Hof_DualityDistSp}), for any metric compact Hausdorff space $(X,d,\alpha)$ with corresponding approach space $(X,a)$, $\alpha:U(X,a)\to(X,a)$ is an approach contraction; and for an approach space $(X,a)$ with  Eilenberg--Moore algebra structure $\alpha:U(X,a)\to(X,a)$, $(X,d,\alpha)$ is a metric compact Hausdorff space where $d$ is the underlying metric of $a$ and, moreover, $a$ is the approach structure induced by $d$ and $\alpha$. 

It is worthwhile to note that the monad $\mU$ on $\TOP$ as well as on $\AP$ satisfies a pleasant technical property: it is of Kock-Z\"oberlein type \citep{Koc_MonAd,Zob_Doct}. In what follows we will not explore this further and refer instead for the definition and other information to \citep{EF_SemDom}. We just remark here that one important consequence of this property is that an Eilenberg--Moore algebra structure $\alpha:UX\to X$ on an $\{$approach, topological$\}$ space $X$ is necessarily left adjoint to $e_X:X\to UX$. If $X$ is T$_0$, then one even has that an approach map $\alpha:UX\to X$ is an Eilenberg--Moore algebra structure on $X$ if and only if $\alpha\cdot e_X=1_X$. Hence, a T$_0$ approach space $X=(X,a)$ is an $\mU$-algebra if and only if
\begin{enumerate}
\item\label{C3} every ultrafilter $\frx\in UX$ has a generic convergence point $\alpha(\frx)$ meaning that $a(\frx,x)=a_0(\alpha(\frx),x)$, for all $x\in X$, and
\item\label{C4} the map $\alpha:UX\to X$ is an approach map.
\end{enumerate}
We observed already in \citep{Hof_DualityDistSp} that the latter condition can be substituted by 
\begin{enumerate}
\renewcommand{\theenumi}{\ref{C4}'}
\item $X$ is $+$-exponentiable.
\end{enumerate}
For the reader familiar with the notion of sober approach space \citep{BLO_AFrm} we remark that the former condition can be splitted into the following two conditions:
\begin{enumerate}
\renewcommand{\theenumi}{\ref{C3}a}
\item for every ultrafilter $\frx\in UX$, $a(\frx,-)$ is an approach prime element, and
\renewcommand{\theenumi}{\ref{C3}b}
\item $X$ is sober. 
\end{enumerate}
Certainly, the two conditions above imply \eqref{C3}. For the reverse implication, just note that every approach prime element $\varphi:X\to[0,\infty]$ is the limit function of some ultrafilter $\frx\in UX$ (see \cite[Proposition 5.7]{BLO_AFrm}). Hence, every stably compact approach space is sober. We call an $+$-exponentiable approach space $X$ \emph{stable} if $X$ satisfies the condition (\ref{C3}a) above (compare with Subsection \ref{Subsect:StablyCompSp}), and with this nomenclature one has
\begin{proposition}
An T$_0$ approach space $X$ is stably compact if and only if $X$ is sober, $+$-exponentiable and stable.
\end{proposition}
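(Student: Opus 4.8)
The plan is to assemble the statement from the algebra--space correspondence recorded above, so that the only real work is to match the three named properties against the two conditions (C3) and (C4) that single out the T$_0$ Eilenberg--Moore algebras for $\mU$.

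First I would observe that, for a T$_0$ approach space, being \emph{stably compact} is the same as admitting a $\mU$-algebra structure. Indeed, if $X=K(X',d,\alpha)$ then $\alpha\colon U(X',a)\to(X',a)$ is an approach map and an Eilenberg--Moore structure; conversely, if $\alpha\colon UX\to X$ is such a structure, then $(X,a_0,\alpha)$ is a metric compact Hausdorff space whose associated approach structure is again $a$, so that $X$ lies in the image of $K$. This is exactly the equivalence $\MET^\mU\simeq\AP^\mU$ together with its explicit description from \citep{Hof_DualityDistSp}.

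Next, using that $\mU$ is of Kock--Z\"oberlein type, a T$_0$ approach space is a $\mU$-algebra if and only if it satisfies (C3), the existence of generic convergence points, and (C4), that the resulting $\alpha$ be an approach map. I would then replace (C4) by $+$-exponentiability (C4$'$), as already observed in \citep{Hof_DualityDistSp}, and split (C3) as (C3a)$+$(C3b). For (C3a)$+$(C3b)$\Rw$(C3): soberness turns the approach prime element $a(\frx,-)$ into a representable one $a_0(\alpha(\frx),-)$, and T$_0$ makes the representing point unique, which is precisely a generic convergence point. For the converse, (C3) makes each $a(\frx,-)$ representable, hence approach prime, giving (C3a); and (C3b) follows since by \citep[Proposition 5.7]{BLO_AFrm} every approach prime element equals some $a(\frx,-)$, which (C3) then represents. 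As (C3a) is by definition stability, chaining these equivalences gives stably compact $\Leftrightarrow$ (C3)$+$(C4) $\Leftrightarrow$ stable $+$ sober $+$ $+$-exponentiable.

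The main obstacle is not a new computation but a definitional one: checking that the notion of \emph{sober} approach space from \citep{BLO_AFrm} is exactly what is needed to pass between the representability of approach prime elements and the existence and uniqueness of generic convergence points, so that (C3b) genuinely captures soberness and no hidden hypothesis slips in.
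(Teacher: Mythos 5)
Your proposal is correct and takes essentially the same route as the paper, whose proof of this proposition is exactly the chain of reductions assembled in the surrounding text: stably compact $\Leftrightarrow$ T$_0$ $\mU$-algebra (via $\MET^\mU\simeq\AP^\mU$ and the explicit description from \citep{Hof_DualityDistSp}) $\Leftrightarrow$ (C3)$+$(C4) by the Kock--Z\"oberlein property, with (C4) traded for $+$-exponentiability and (C3) split into (C3a)$+$(C3b) using \citep[Proposition 5.7]{BLO_AFrm}. The only difference is that you make explicit the step ``representable implies approach prime'' needed for (C3)$\Rightarrow$(C3a), which the paper leaves implicit.
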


\section{Injective approach spaces}

\subsection{Yoneda embeddings}\label{SubSect:YonedaInAPP}

Let $X=(X,a)$ be an approach space with convergence $a:UX\times X\to[0,\infty]$. Then $a$ is actually an approach map $a:(UX)^\op\oplus X\to[0,\infty]$, and we refer to its $+$-exponential mate $\yoneda_X:=\mate{a}:X\to[0,\infty]^{(UX)^\op}$ as the \emph{(covariant) Yoneda embedding} of $X$ (see \citep{CH_Compl} and \citep{Hof_DualityDistSp}). We denote the approach space $[0,\infty]^{(UX)^\op}$ as $PX$, and its approach convergence structure as $\fspstr{-}{-}$. One has $a(\frx,x)=\fspstr{U\!\yoneda_X(\frx)}{\yoneda_X(x)}$ for all $\frx\in UX$ and $x\in X$ (hence $\yoneda_X$ is indeed an embedding when $X$ is $T_0$) thanks to the \emph{Yoneda Lemma} which states here that, for all $\frx\in UX$ and $\psi\in PX$,
\[
 \fspstr{U\!\yoneda_X(\frx)}{\psi}=\psi(\frx).
\]

The metric $d:UX\times UX\to[0,\infty]$ (see \eqref{Eq:MetricOnUX}) is actually an approach map $d:(UX)^\op\oplus UX\to[0,\infty]$, whose mate can be seen as a \emph{``second'' (covariant) Yoneda embedding} $\yonedaT_X:UX\to PX$, and the ``second'' Yoneda Lemma reads as (see \citep{Hof_DualityDistSp})
\[
 \fspstr{U\!\yonedaT_X(\frX)}{\psi}=\psi(m_X(\frX)),
\]
for all $\frX\in UUX$ and $\psi\in PX$.
\begin{remark}\label{rem:FilterOfOpens}
Similarly, the convergence relation $\to:(UX)^\op\times X\to\two$ of a topological space $X$ is continuous, and by taking its exponential transpose we obtain the Yoneda embedding $\yoneda_X:X\to\two^{(UX)^\op}$. A continuous map $\psi:X^\op\to\two$ can be identified with a closed subset $\calA\subseteq UX$. In \citep{HT_LCls} it is shown that $\calA$ corresponds to a filter on the lattice of opens of $X$, moreover, the space $\two^{(UX)^\op}$ is homeomorphic to the space $F_0X$ of all such filters, where the topology on $F_0X$ has 
\begin{align*}
\{\frf\in F_0 X\mid A\in\frf\}&& \text{($A\subseteq X$ open)}
\end{align*}
as basic open sets (see \citep{Esc_InjSp}). Under this identification, the Yoneda embedding $\yoneda_X:X\to\two^{(UX)^\op}$ corresponds to the map $X\to F_0X$ sending every $x\in X$ to its neighbourhood filter, and $\yonedaT_X:UX\to F_0 X$ restricts an ultrafilter $\frx\in UX$ to its open elements.
\end{remark}

Since an approach space $X$ is in general not $+$-exponentiable, the set $[0,\infty]^X$ of all approach maps of type $X\to[0,\infty]$ does not admit a canonical approach structure. However, it still becomes a metric space when equipped with the sup-metric, that is, the metric space $[0,\infty]^X$ is a subspace of the $+$-exponential $[0,\infty]^{X_0}$ in $\MET$ of underlying metric space $X_0$ of $X$. Recall from Subsection \ref{SubSect:CocomplMetSp} that the contravariant Yoneda embedding $\yonedaOP_{X_0}:X_0\to\left([0,\infty]^{X_0}\right)^\op$ of the metric space $X_0$ sends an element $x\in X_0$ to the metric map $X_0\to[0,\infty],\,x'\mapsto a_0(x,x')=a(\doo{x},x')$. But the map $\yonedaOP_{X_0}(x)$ can be also seen as an approach map of type $X\to[0,\infty]$, hence this construction defines also a metric map $\yonedaOP_X:X_0\to\left([0,\infty]^X\right)^\op$, for every approach space $X$.

The inclusion map $[0,\infty]^X\hrw[0,\infty]^{X_0}$ has a left adjoint $[0,\infty]^{X_0}\to[0,\infty]^X$ which sends a metric map $\varphi:X_0\to[0,\infty]$ to the approach map $X\to[0,\infty]$ which sends $x$ to $\displaystyle{\inf_{\frx\in UX}a(\frx,x)+\xi(U\varphi(\frx))}$ (where $\displaystyle{\xi(\fru)=\sup_{A\in\fru}\inf_{u\in A}u}$). In particular, if $\varphi=a(e_X(x),-)$, then $\xi(U\varphi(\frx))=Ua(e_{UX}\cdot e_X(x),\frx)$ and therefore $\displaystyle{\inf_{\frx\in UX}a(\frx,x)+\xi(U\varphi(\frx))}=a(e_X(x),-)$. Hence, both the left and the right adjoint commute with the contravariant Yoneda embeddings.
\[
 \xymatrix{\left([0,\infty]^{X_0}\right)^\op\ar@/^1.5ex/[rr] &&
 \left([0,\infty]^X\right)^\op\ar@/^1.5ex/[ll]\\
 & X_0\ar[ul]^{\yonedaOP_{X_0}}\ar[ur]_{\yonedaOP_X}}
\]

Finally, one also have the \emph{Isbell conjugation adjunction} in this context:
\[
\xymatrix{\left([0,\infty]^X\right)^\op\ar@/^1.5ex/[rr]^{(-)^-}\ar@{}[rr]|\top &&
\left([0,\infty]^{X^\op}\right)_0\ar@/^1.5ex/[ll]^{(-)^+}\\
 & X_0\ar[ul]^{\yonedaOP_X}\ar[ur]_{\yoneda_X}}
\]
where
\begin{align*}
\varphi^-(\frx)&=\sup_{x\in X}(a(\frx,x)\trunminus\varphi(x))
&\text{and}&&
\psi^+(x)&=\sup_{\frx\in UX}(a(\frx,x)\trunminus\psi(\frx)).
\end{align*}
\begin{remark}
In our considerations above we were very sparse on details and proofs. This is because (in our opinion) this material is best presented in the language of \emph{modules} (also called distributors or profunctors), but we decided not to include this concept here and refer for details to \citep{CH_Compl} and \citep{Hof_Cocompl} (for the particular context of this paper) and to \citep{BenDistWork} and \citep{Law_MetLogClo}) for the general concept. We note that $\psi:(UX)^\op\to[0,\infty]$ is the same thing as a module $\psi:X\kmodto 1$ from $X$ to $1$ and $\varphi:X\to[0,\infty]$ is the same thing as a module $\varphi:1\kmodto X$. Then $\psi^+$ is the \emph{extension} of $\psi$ along the identity module on $X$ (see \citep[1.3 and Remark 1.5]{Hof_Cocompl}), and $\varphi^-$ is the \emph{lifting} of $\varphi$ along the identity module on $X$ (see \citep[Lemma 5.11]{HW_AppVCat}); and this process defines quite generally an adjunction.
\end{remark}

\subsection{Cocomplete approach spaces}

In this and the next subsection we study the notion of \emph{cocompleteness} for approach spaces, as initiated in \citep{CH_Compl,Hof_Cocompl,CH_CocomplII}. By analogy with ordered sets and metric spaces, we think of an approach map $\psi:(UX)^\op\to[0,\infty]$ as a ``down-set'' of $X$. A point $x_0\in X$ is a \emph{supremum} of $\psi$ if
\[
 a(\doo{x_0},x)=\sup_{\frx\in UX}a(\frx,x)\trunminus\psi(\frx),
\]
for all $x\in X$. As before, suprema are unique up to equivalence, and therefore we will often talk about the supremum. An approach map $f:(X,a)\to(Y,b)$ preserves the supremum of $\psi$ if
\[
 b(\doo{f(x_0)},y)=\sup_{\fry\in UY}b(Uf(\frx),y)\trunminus\psi(\frx).
\]
Not surprisingly (see \citep{Hof_Cocompl}),
\begin{lemma}
Left adjoint approach maps $f:X\to Y$ between approach spaces preserve all suprema which exist in $X$.
\end{lemma}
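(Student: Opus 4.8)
The plan is to reduce the preservation formula to the defining equation of the supremum by means of the ``hom-formula'' characterisation of adjunctions in $\AP$. Let $g:Y\to X$ be a right adjoint of $f$, so that $f\dashv g$, and recall that this is equivalent to
\[
b(Uf(\frx),y)=a(\frx,g(y))
\]
for all $\frx\in UX$ and $y\in Y$. Let $x_0\in X$ be a supremum of a ``down-set'' $\psi:(UX)^\op\to[0,\infty]$; according to the definition above I must show that $f(x_0)$ satisfies
\[
b(\doo{f(x_0)},y)=\sup_{\frx\in UX}(b(Uf(\frx),y)\trunminus\psi(\frx))
\]
for every $y\in Y$.

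First I would rewrite the right-hand side using the adjunction to eliminate $f$ and $b$ in favour of $a$ and $g$: replacing $b(Uf(\frx),y)$ by $a(\frx,g(y))$ turns the expression into $\sup_{\frx\in UX}(a(\frx,g(y))\trunminus\psi(\frx))$. Since $x_0$ is by assumption a supremum of $\psi$, its defining equation
\[
a(\doo{x_0},x)=\sup_{\frx\in UX}(a(\frx,x)\trunminus\psi(\frx))
\]
holds for every $x\in X$; instantiating it at $x=g(y)$ identifies the right-hand side precisely with $a(\doo{x_0},g(y))$.

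It then remains to transport this value back across the adjunction. Applying the hom-formula a second time, now to the principal ultrafilter $\doo{x_0}=e_X(x_0)$, gives $a(\doo{x_0},g(y))=b(Uf(\doo{x_0}),y)$. Finally, naturality of the unit $e$ of the ultrafilter monad ($Uf\cdot e_X=e_Y\cdot f$) yields $Uf(\doo{x_0})=\doo{f(x_0)}$, so that $b(Uf(\doo{x_0}),y)=b(\doo{f(x_0)},y)$, which is exactly the left-hand side. Chaining these equalities establishes the claim for arbitrary $y\in Y$, and hence $f$ preserves the supremum of $\psi$.

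The argument is essentially a formal computation, and I do not anticipate a genuine obstacle; the only points requiring care are the bookkeeping of which variable the adjunction formula is applied to (once with a generic $\frx$, once with the principal ultrafilter $\doo{x_0}$), and the routine but essential naturality identity $Uf(\doo{x_0})=\doo{f(x_0)}$ that lets one pass from $Uf$ applied to a principal ultrafilter back to a principal ultrafilter. Everything else is a direct substitution into the defining equation of $\Sup_X\psi$, so the real content lies entirely in the adjunction formula $b(Uf(\frx),y)=a(\frx,g(y))$.
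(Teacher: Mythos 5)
Your proof is correct and is exactly the standard argument: the paper itself states this lemma without proof (deferring to \citep{Hof_Cocompl}), and the computation given there is the same chain of substitutions, using the adjunction formula $b(Uf(\frx),y)=a(\frx,g(y))$ twice together with $Uf(\doo{x_0})=\doo{f(x_0)}$. No gaps; the bookkeeping points you flag (instantiating the supremum equation at $g(y)$, naturality of $e$) are precisely the only things that need checking.
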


We call an approach space $X$ \emph{cocomplete} if every ``down-set'' $\psi:(UX)^\op\to[0,\infty]$ has a supremum in $X$. If this is the case, then ``taking suprema'' defines a map $\Sup_X:PX\to X$, indeed, one has
\begin{proposition}
An approach space $X$ is cocomplete if and only if $\yoneda_X:X_0\to(PX)_0$ has a left adjoint $\Sup_X:(PX)_0\to X_0$ in $\MET$.
\end{proposition}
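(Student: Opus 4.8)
The plan is to show that, pointwise in $\psi$, each of the two conditions is nothing but the defining equation of a supremum, so that the claimed equivalence reduces to a matching of formulas.

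First I would unwind the meaning of the asserted adjunction. By the characterisation of adjunctions in $\MET$ recalled in Subsection~\ref{Subsect:Prelim}, asking that $\Sup_X\colon(PX)_0\to X_0$ be left adjoint to $\yoneda_X\colon X_0\to(PX)_0$ amounts to the identity
\[
 a(\doo{\Sup_X(\psi)},x)=[\psi,\yoneda_X(x)]
\]
for all $\psi\in PX$ and $x\in X$, where the left-hand side is the underlying metric $d_{X_0}(\Sup_X(\psi),x)=a(\doo{\Sup_X(\psi)},x)$ and the right-hand side is the distance computed in $(PX)_0$.

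The key step is to evaluate that right-hand side. Since $PX=[0,\infty]^{(UX)^\op}$, the earlier remark that $(Y^X)_0$ is a subspace of $Y^{X_0}$ (applied with the base taken to be $[0,\infty]$ and the exponent taken to be $(UX)^\op$) identifies the underlying metric of $PX$ with the restriction of the sup-metric; hence, using $\yoneda_X(x)=a(-,x)$ and $\mu(u,v)=v\trunminus u$,
\[
 [\psi,\yoneda_X(x)]=\sup_{\frx\in UX}\bigl(a(\frx,x)\trunminus\psi(\frx)\bigr).
\]
Substituting this back, the adjunction identity reads $a(\doo{\Sup_X(\psi)},x)=\sup_{\frx\in UX}(a(\frx,x)\trunminus\psi(\frx))$, which is exactly the statement that $\Sup_X(\psi)$ is a supremum of $\psi$. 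The two implications are then immediate: if $X$ is cocomplete, choosing for each $\psi$ a supremum defines a map satisfying the displayed identity, so $\Sup_X\dashv\yoneda_X$; conversely, a left adjoint $\Sup_X$ produces, via the same identity, a supremum of every $\psi$, so $X$ is cocomplete. That $\Sup_X$ is genuinely a morphism of $\MET$ is automatic once the adjunction equation holds for the underlying functions: a short triangle-inequality argument (using $d(x,x)=0$) shows that both partners of such an equation are metric maps, and $\yoneda_X$ is a metric map in any case. Since suprema are unique up to $\simeq$, the map $\Sup_X$ is well defined up to equivalence, which is all an adjoint is required to be.

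The only real obstacle is this middle step, namely pinning down the underlying metric of the approach function space $PX$ as the sup-metric; everything else is a tautological rewriting. I would also pause to check that $\yoneda_X(x)$ really is an element of $PX$ --- it is, being the $+$-exponential mate of the approach map $a\colon(UX)^\op\oplus X\to[0,\infty]$ --- so that the supremum above ranges over the intended functions and the Yoneda embedding has the expected codomain.
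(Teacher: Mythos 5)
Your proof is correct and follows exactly the argument the paper intends (and spells out for the metric case in Subsection 1.2): the supremum-defining equation $a(\doo{x_0},x)=\sup_{\frx\in UX}(a(\frx,x)\trunminus\psi(\frx))$ coincides with the adjunction identity $d_{X_0}(\Sup_X(\psi),x)=[\psi,\yoneda_X(x)]$ once the underlying metric of $PX$ is identified as the sup-metric. Your additional observations --- that $\Sup_X$ is automatically a metric map by a triangle-inequality argument, and that suprema are only defined up to $\simeq$ --- are exactly the points the paper leaves tacit, so nothing is missing.
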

\begin{remark}
We deviate here from the notation used in previous work where a space $X$ was called cocomplete whenever $\yoneda_X:X\to PX$ has a left adjoint in $\AP$. Approach spaces satisfying this (stronger) condition will be called absolutely cocomplete (see Subsection \ref{SubSect:OpContLatAct} below) here.
\end{remark}

With the help of Subsection \ref{SubSect:YonedaInAPP}, one sees immediately that $\Sup_X:(PX)_0\to X_0$ produces a left inverse of $\yonedaOP_{X_0}:X_0\to\left([0,\infty]^{X_0}\right)^\op$ in $\MET$, hence the underlying metric space $X_0$ is complete. Certainly, a left inverse of $ X_0\to\left([0,\infty]^{X_0}\right)^\op$ in $\MET$ gives a left inverse of $\yoneda_X:X_0\to(PX)_0$ in $\MET$, however, such a left inverse does not need to be a left adjoint (see Example \ref{ex:CocomplNotUCocompl}). In the following subsection we will see what is missing.

\subsection{Special types of colimits}

Similarly to what was done for metric spaces, we will be interested in approach spaces which admit certain types of suprema.

Our first example are \emph{tensored} approach spaces which are defined exactly as their metric counterparts. Explicitly, to every point $x$ of an approach space $X=(X,a)$ and every $u\in[0,\infty]$ one associates a ``down-set'' $\psi:(UX)^\op\to[0,\infty],\,\frx\mapsto a(\frx,x)+u$, and a supremum of $\psi$ (which, we recall, is unique up to equivalence) we denote as $x+u$. Then $X$ is called tensored if every such $\psi$ has a supremum in $X$. By definition, $x+u\in X$ is characterised by the equation
\[
 a(e_X(x+u),y)=\sup_{\frx\in UX}(a(\frx,y)\trunminus(a(\frx,x)+u)),
\]
for all $y\in X$. This supremum is actually obtained for $\frx=\doo{x}$, so that the right hand side above reduces to $a(\doo{x},y)\trunminus u$. Therefore:
\begin{proposition}
An approach space $X$ is tensored if and only if its underlying metric space $X_0$ is tensored.
\end{proposition}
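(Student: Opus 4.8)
The plan is to show that the equation characterising the \emph{approach-theoretic} tensor $x+u$ in $X$ collapses, via the identity $a_0(x',y)=a(\doo{x'},y)$, to the equation characterising the \emph{metric-theoretic} tensor $x+u$ in $X_0$. Once the two characterising equations are seen to coincide, the element witnessing one tensor is literally the element witnessing the other, so existence of the one is equivalent to existence of the other and the asserted equivalence follows at once.

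First I would recall that $x_0\simeq x+u$ in the approach space $X$ means $a(e_X(x_0),y)=\sup_{\frx\in UX}(a(\frx,y)\trunminus(a(\frx,x)+u))$ for all $y\in X$, whereas $x_0\simeq x+u$ in the metric space $X_0$ means $a_0(x_0,y)=a_0(x,y)\trunminus u$ for all $y\in X$. Since $a(e_X(x_0),y)=a_0(x_0,y)$, the two conditions agree as soon as one establishes the pointwise identity
\[
 \sup_{\frx\in UX}(a(\frx,y)\trunminus(a(\frx,x)+u))=a_0(x,y)\trunminus u,
\]
which is exactly the reduction announced immediately before the statement.

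The inequality ``$\geq$'' is immediate: evaluating the defining expression at the principal ultrafilter $\frx=\doo{x}$ and using reflexivity in the form $a(\doo{x},x)=a_0(x,x)=0$ already produces the term $a_0(x,y)\trunminus u$. The real content is the reverse inequality, i.e.\ that $\doo x$ actually attains the supremum, and this is where I expect the main obstacle. I would reduce it to the ``right unitarity'' law $a(\frx,y)\le a(\frx,x)+a_0(x,y)$; granting this, a short case analysis on the truncated difference (according to whether $a_0(x,y)\ge u$ or not) yields $a(\frx,y)\trunminus(a(\frx,x)+u)\le a_0(x,y)\trunminus u$ for every $\frx\in UX$, as required.

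It remains to prove the unitarity law, the only genuinely computational point. I would derive it from the transitivity axiom \eqref{Eq:AxiomsApp} applied to $e_{UX}(\frx)\in UUX$, the intermediate ultrafilter $\doo x$, and the point $y$: the monad law gives $m_X(e_{UX}(\frx))=\frx$, while a direct evaluation---the relevant infimum and supremum being attained at the singletons $\{\frx\}$ and $\{x\}$---yields $Ua(e_{UX}(\frx),\doo x)=a(\frx,x)$. Transitivity then reads $a(\frx,x)+a(\doo x,y)\ge a(\frx,y)$, that is $a(\frx,x)+a_0(x,y)\ge a(\frx,y)$, which is the unitarity law. With the identity established, the equivalence ``$X$ tensored $\iff X_0$ tensored'' follows formally.
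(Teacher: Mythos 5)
Your proof is correct and takes essentially the same route as the paper: the paper characterises the tensor $x+u$ by the same displayed supremum over $\frx\in UX$ and asserts that this supremum is attained at the principal ultrafilter $\doo{x}$, so that the right-hand side collapses to $a(\doo{x},y)\trunminus u$, which is precisely the characterisation of the tensor in $X_0$. The only difference is that you spell out the justification the paper leaves implicit, namely the inequality $a(\frx,y)\le a(\frx,x)+a_0(x,y)$ obtained from the transitivity axiom with $\frX=e_{UX}(\frx)$, and your derivation of it is sound.
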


We call an approach space $X=(X,a)$ \emph{$\mU$-cocomplete} if every ``down-set'' $\psi:(UX)^\op\to[0,\infty]$ of the form $\psi=\yonedaT_X(\frx)$ with $\frx\in UX$ has a supremum in $X$. Such a supremum, denoted as $\alpha(\frx)$, is characterised by
\[
 a(\doo{\alpha(\frx)},x)=\sup_{\fry\in UX}(a(\fry,x)\trunminus d(\fry,\frx)),
\]
for all $x\in X$. Here the supremum is obtained for $\fry=\frx$, hence the equality above translates to
\[
 a_0(\alpha(\frx),x)=a(\frx,x),
\]
where $a_0$ denotes the underlying metric of $a$. Since $a(\frx,x)=d(\frx,\doo{x})$ where $d$ is the metric on $(UX)_0$, we conclude that
\begin{proposition}
An approach space $X$ is $\mU$-cocomplete if and only if $e_X:X_0\to(UX)_0$ has a left adjoint $\alpha:(UX)_0\to X_0$ in $\MET$.
\end{proposition}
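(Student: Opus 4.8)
The plan is to show that each side of the equivalence reduces to the existence of a single map $\alpha:UX\to X$ satisfying the pointwise identity $a_0(\alpha(\frx),x)=a(\frx,x)$ for all $\frx\in UX$ and $x\in X$, so that the proposition amounts to matching two unwound definitions.

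First I would recall, from the computation preceding the statement, that for a fixed $\frx\in UX$ an element $z\in X$ is a supremum of the ``down-set'' $\yonedaT_X(\frx)$ if and only if $a_0(z,x)=a(\frx,x)$ for all $x\in X$. Indeed, the defining equation of the supremum reads $a(\doo{z},x)=\sup_{\fry\in UX}(a(\fry,x)\trunminus d(\fry,\frx))$, and the supremum on the right is attained at $\fry=\frx$, where $d(\frx,\frx)=0$; that the remaining terms are dominated follows from the triangle inequality for $d$ together with $a(\fry,x)=d(\fry,\doo{x})$. Consequently $X$ is $\mU$-cocomplete precisely when, for every $\frx\in UX$, there exists $\alpha(\frx)\in X$ with $a_0(\alpha(\frx),x)=a(\frx,x)$ for all $x\in X$.

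Next I would unwind the adjunction $\alpha\dashv e_X$ in $\MET$. By the hom-equation characterising adjoint metric maps, a map $\alpha:(UX)_0\to X_0$ is left adjoint to $e_X:X_0\to(UX)_0$ if and only if $a_0(\alpha(\frx),x)=d(\frx,e_X(x))$ for all $\frx\in UX$ and $x\in X$. Since $e_X(x)=\doo{x}$ and $d(\frx,\doo{x})=a(\frx,x)$, this is precisely the pointwise identity of the previous paragraph. Gathering the individual suprema $\alpha(\frx)$ into one assignment $\frx\mapsto\alpha(\frx)$ thus yields the left adjoint, while conversely any such left adjoint supplies, for each $\frx$, a supremum of $\yonedaT_X(\frx)$; the two conditions coincide.

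The only point needing a short separate argument---and the one place to be slightly careful---is that the map $\alpha$ extracted from $\mU$-cocompleteness is genuinely a morphism of $\MET$, so that $\alpha\dashv e_X$ is an adjunction \emph{in} $\MET$ rather than a mere equation of functions. This is the familiar fact that a map satisfying the adjunction identity is automatically a metric map: from $a_0(\alpha(\frx),\alpha(\fry))=d(\frx,e_X(\alpha(\fry)))$ one estimates $d(\frx,e_X(\alpha(\fry)))\le d(\frx,\fry)+d(\fry,e_X(\alpha(\fry)))=d(\frx,\fry)$, using the triangle inequality and $d(\fry,e_X(\alpha(\fry)))=a_0(\alpha(\fry),\alpha(\fry))=0$. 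As $e_X$ is already a metric map, nothing further is required and the equivalence follows.
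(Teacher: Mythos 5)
Your proof is correct and follows essentially the same route as the paper: you characterise the supremum of $\yonedaT_X(\frx)$ by the identity $a_0(\alpha(\frx),x)=a(\frx,x)$ (with the defining supremum attained at $\fry=\frx$), and then observe that via $a(\frx,x)=d(\frx,e_X(x))$ this is exactly the hom-equation exhibiting $\alpha\dashv e_X$ in $\MET$. Your closing verification that the assignment $\frx\mapsto\alpha(\frx)$ is automatically a metric map makes explicit a point the paper passes over silently, but it does not change the approach.
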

Note that every metric compact Hausdorff space is $\mU$-cocomplete. We are now in position to characterise cocomplete approach spaces.
\begin{theorem}
Let $X$ be an approach space. Then the following assertions are equivalent.
\begin{eqcond}
\item\label{C2} $X$ is cocomplete.
\item The metric space $X_0$ is complete and and the approach space $X$ is $\mU$-cocomplete.
\item\label{C1} The metric space $X_0$ is complete and $e_X:X_0\to(UX)_0$ has a left adjoint $\alpha:(UX)_0\to X_0$ in $\MET$.
\end{eqcond}
Furthermore, in this situation the supremum of a ``down-set'' $\psi:(UX)^\op\to[0,\infty]$ is given by
\begin{equation}\label{Eq:FormWeightSupAp}
\bigvee_{\frx\in UX}\alpha(\frx)+\psi(\frx).
\end{equation}
\end{theorem}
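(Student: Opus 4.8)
The plan is to reduce the equivalences almost entirely to facts already established for the underlying metric space $X_0$, reserving the genuine work for the verification that the formula \eqref{Eq:FormWeightSupAp} computes an arbitrary supremum. The equivalence (ii)$\Lrlw$(iii) needs no argument beyond the Proposition characterising $\mU$-cocompleteness: it asserts precisely that $X$ is $\mU$-cocomplete if and only if $e_X:X_0\to(UX)_0$ admits a left adjoint $\alpha$ in $\MET$, so the two formulations differ only in phrasing.

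For (i)$\Rw$(ii) I would argue directly. If $X$ is cocomplete, then as already observed $\Sup_X$ yields a left inverse of $\yonedaOP_{X_0}$ in $\MET$, so $X_0$ is complete; and since \emph{every} ``down-set'' has a supremum, in particular the ones of the form $\yonedaT_X(\frx)$ do, whence $X$ is $\mU$-cocomplete.

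The heart of the proof is (iii)$\Rw$(i), carried out so as to establish the formula at the same time. Assume $X_0$ is complete and $\alpha\dashv e_X$. By the Corollary, $X_0$ is then cocomplete, so by the characterisation in Subsection~\ref{Subsect:TensMet} it has all order-theoretic suprema, it is tensored (hence so is $X$), and for every $x\in X$ the monotone map $a_0(-,x):(X_0)_p\to[0,\infty]$ preserves suprema. In particular $x_0:=\bigvee_{\frx\in UX}(\alpha(\frx)+\psi(\frx))$ is a well-defined element of $X$ for any ``down-set'' $\psi:(UX)^\op\to[0,\infty]$. I then compute, for arbitrary $x\in X$,
\begin{align*}
a(\doo{x_0},x)=a_0(x_0,x)
&=\sup_{\frx\in UX}a_0\bigl(\alpha(\frx)+\psi(\frx),x\bigr)\\
&=\sup_{\frx\in UX}\bigl(a_0(\alpha(\frx),x)\trunminus\psi(\frx)\bigr)\\
&=\sup_{\frx\in UX}\bigl(a(\frx,x)\trunminus\psi(\frx)\bigr),
\end{align*}
where the first equality uses that $a_0(-,x)$ preserves the supremum defining $x_0$, the second is the defining property of the tensor, and the third is the equation $a_0(\alpha(\frx),x)=a(\frx,x)$ characterising $\alpha(\frx)$. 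This is exactly the condition for $x_0$ to be a supremum of $\psi$; as $\psi$ was arbitrary, $X$ is cocomplete and its suprema are given by \eqref{Eq:FormWeightSupAp}.

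The main obstacle is bookkeeping rather than ingenuity: one must keep track that the underlying order of $[0,\infty]$ is $\geqslant$, so that ``preservation of suprema'' by $a_0(-,x)$ means sending order-theoretic joins in $X_0$ to infima in the usual order on $[0,\infty]$, and one must be certain that the characterisation of cocomplete metric spaces genuinely supplies this property (condition~(vi) of the Theorem in Subsection~\ref{Subsect:TensMet}) rather than merely the existence of tensors. Once these points are secured, the three-line computation above closes the argument.
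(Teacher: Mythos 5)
Your proposal is correct and follows essentially the same route as the paper: the implications (i)$\Rw$(ii) and (ii)$\RLw$(iii) are dispatched exactly as the paper does (via the left inverse of $\yonedaOP_{X_0}$ and the Proposition characterising $\mU$-cocomplete spaces), and your computation for (iii)$\Rw$(i) is, line for line, the paper's own verification that the formula \eqref{Eq:FormWeightSupAp} yields a supremum of $\psi$.

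One slip in your closing remark deserves correction, although it does not affect the displayed proof: preservation of suprema by $a_0(-,x)$ does \emph{not} mean that order-theoretic joins in $X_0$ are sent to infima in the usual order on $[0,\infty]$; it means they are sent to \emph{suprema} with respect to $\leqslant$, i.e.\ numerical suprema. Indeed, $a_0(-,x)$ is the underlying monotone map of the metric map $a_0(-,x):X_0^\op\to[0,\infty]$, hence it is monotone as a map $(X_0)_p\to([0,\infty],\leqslant)$ (it is \emph{anti}tone into $([0,\infty],\geqslant)$, so preserving joins into that order would not even typecheck); accordingly, the Lemma closing Subsection~\ref{SubSect:CocomplMetSp} reads $d(x_0,x)=\sup_{a\in A}d(a,x)$, a numerical supremum, and condition (vi) of the Theorem in Subsection~\ref{Subsect:TensMet} is to be read the same way. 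Under the reading you state, the first equality of your display would have to be $\inf_{\frx\in UX}$ rather than $\sup_{\frx\in UX}$, and the computation would then fail to produce the supremum condition $a(\doo{x_0},x)=\sup_{\frx\in UX}(a(\frx,x)\trunminus\psi(\frx))$. Since what you actually wrote is the correct version (joins go to numerical sups), the proof stands; only the commentary has the direction of the order reversed.
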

\begin{proof}
To see the implication \eqref{C1}$\Rw$\eqref{C2}, we only need to show that the formula \eqref{Eq:FormWeightSupAp} gives indeed a supremum of $\psi$. In fact, 
\[
a_0(\bigvee_{\frx\in UX}\alpha(\frx)+\psi(\frx),x)
=\sup_{\frx\in UX}a_0(\alpha(\frx)+\psi(\frx),x)
=\sup_{\frx\in UX}(a_0(\alpha(\frx),x)\trunminus\psi(\frx))
=\sup_{\frx\in UX}(a(\frx,x)\trunminus\psi(\frx)),
\]
for all $x\in X$.
\end{proof}
\begin{example}\label{ex:CocomplNotUCocompl}
Every metric compact Hausdorff space whose underlying metric is cocomplete gives rise to a cocomplete approach space. In particular, both $[0,\infty]$ and $[0,\infty]^\op$ are cocomplete (see Example \ref{Ex:PasMCHsp}).

To each metric $d$ on a set $X$ one associates the approach convergence structure 
\[
 a_d(\frx,y)=\sup_{A\in\frx}\inf_{y\in A}d(x,y),
\]
and this construction defines a left adjoint to the forgetful functor $(-)_0:\AP\to\MET$. Furthermore, note that $a_d(\doo{x},y)=d(x,y)$. In particular, for the metric space $[0,\infty]=([0,\infty],\mu)$ one obtains
\[
 a_\mu(\frx,y)=\sup_{A\in\frx}\inf_{x\in A}(y\trunminus x),
\]
and the approach space $([0,\infty],a_\mu)$ is not $\mU$-cocomplete. To see this, consider any ultrafilter $\frx\in U[0,\infty]$ which contains all sets
\[
 \{x\in[0,\infty]\mid u\le x<\infty\},
\]
$u\in[0,\infty]$ and $u<\infty$. Then $a_\mu(\frx,\infty)=\infty$ and $a_\mu(\frx,y)=0$ for all $y\in[0,\infty]$ with $y<\infty$, hence $a_\mu$ cannot be of the form $\mu(\alpha(-),-)$ for a map $\alpha:U[0,\infty]\to[0,\infty]$. However, for the metric space $[0,\infty]^\op=([0,\infty],\mu^\circ)$, the approach convergence structure $a_{\mu^\circ}$ is actually the structure induced by the metric compact Hausdorff space $([0,\infty],\mu^\circ,\xi)$ and therefore $([0,\infty],a_{\mu^\circ})$ is cocomplete.
\end{example}

$\mU$-cocomplete approach spaces are closely related to metric compact Hausdorff spaces respectively stably compact approach space, in both cases the approach structure $a$ on $X$ can be decomposed into a metric $a_0$ and a map $\alpha:UX\to U$, and one recovers $a$ as $a(\frx,x)=a_0(\alpha(\frx),x)$. In fact, every metric compact Hausdorff space is $\mU$-cocomplete, but the reverse implication is in general false since, for instance, the map $\alpha:UX\to X$ does not need to be an Eilenberg--Moore algebra structure on $X$ (i.e.\ a compact Hausdorff topology). Fortunately, this property of $\alpha$ was not needed in the proof of Lemma \ref{Lem:MapsMetCompSp}, and we conclude
\begin{lemma}
Let $(X,a)$ and $(Y,b)$ be $\mU$-cocomplete approach spaces and $f:X\to Y$ be a map. Then $f:(X,a)\to(Y,b)$ is an approach map if and only if $f:(X,a_0)\to(Y,b_0)$ is a metric map and, for all $\frx\in UX$, $\beta\cdot Uf(\frx)\le f\cdot\alpha(\frx)$.
\end{lemma}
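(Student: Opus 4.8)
The plan is to observe that this lemma is Lemma~\ref{Lem:MapsMetCompSp} stripped of the hypothesis that $\alpha$ and $\beta$ be Eilenberg--Moore structures; its proof used only the \emph{generic convergence point} property, which for $\mU$-cocomplete spaces holds by definition. Concretely, $\mU$-cocompleteness of $(X,a)$ supplies a map $\alpha:UX\to X$ with
\[
a_0(\alpha(\frx),x)=a(\frx,x)
\]
for all $\frx\in UX$ and $x\in X$, and likewise $\beta:UY\to Y$ with $b_0(\beta(\fry),y)=b(\fry,y)$. I will use these two identities as the sole bridge between the approach structures and their underlying metrics, together with the elementary fact that $Uf(\doo{x})=\doo{f(x)}$.

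For the ``if'' direction I would assume $f:(X,a_0)\to(Y,b_0)$ is in $\MET$ and $\beta\cdot Uf(\frx)\le f\cdot\alpha(\frx)$ for every $\frx$, and verify $a(\frx,x)\geqslant b(Uf(\frx),f(x))$ directly. Expanding the left-hand side with the generic point identity, applying the metric-map inequality, then monotonicity of $b_0(-,f(x))$ (which follows from the triangle inequality, since $\beta\cdot Uf(\frx)\le f\cdot\alpha(\frx)$ forces $b_0(\beta\cdot Uf(\frx),f\cdot\alpha(\frx))=0$), and finally collapsing with the generic point identity on $Y$, yields
\[
a(\frx,x)=a_0(\alpha(\frx),x)\geqslant b_0(f\cdot\alpha(\frx),f(x))\geqslant b_0(\beta\cdot Uf(\frx),f(x))=b(Uf(\frx),f(x)),
\]
as required.

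For the ``only if'' direction, assume $f$ is an approach map. Restricting the defining inequality $a(\frx,x)\geqslant b(Uf(\frx),f(x))$ to principal ultrafilters $\frx=\doo{x'}$ and using $Uf(\doo{x'})=\doo{f(x')}$ gives $a_0(x',x)\geqslant b_0(f(x'),f(x))$, so $f$ is a metric map. For the order inequality I would evaluate the approach-map inequality at the point $x=\alpha(\frx)$ and chain it with reflexivity of $a_0$ and the two generic point identities:
\[
0\geqslant a_0(\alpha(\frx),\alpha(\frx))=a(\frx,\alpha(\frx))\geqslant b(Uf(\frx),f\cdot\alpha(\frx))=b_0(\beta\cdot Uf(\frx),f\cdot\alpha(\frx)),
\]
which says exactly $\beta\cdot Uf(\frx)\le f\cdot\alpha(\frx)$.

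The only point requiring care --- and the reason the statement is worth isolating --- is confirming that no step appeals to $\alpha,\beta$ being algebra structures (compact Hausdorff topologies); each invokes only the generic point identities guaranteed by $\mU$-cocompleteness. This is precisely the observation already flagged in the remark preceding the lemma, so beyond that bookkeeping the proof is routine and no genuine obstacle arises.
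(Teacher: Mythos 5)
Your proposal is correct and matches the paper's own treatment: the paper proves this lemma exactly by observing that the proof of Lemma~\ref{Lem:MapsMetCompSp} never used that $\alpha,\beta$ are Eilenberg--Moore structures, only the generic convergence point identities $a(\frx,x)=a_0(\alpha(\frx),x)$ and $b(\fry,y)=b_0(\beta(\fry),y)$, and your two chains of inequalities are precisely the ones in that proof (with the underlying metrics in place of $d,d'$).
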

\begin{remark}
Once again, everything told here has its topological counterpart. For instance, we call a topological space $X$ $\mU$-cocomplete whenever the monotone map $e_X:X_0\to(UX)_0$ has a left adjoint $\alpha:(UX)_0\to X_0$ in $\ORD$. Then, with $\le$ denoting the underlying order of $X$, an ultrafilter $\frx\in UX$ converges to $x\in X$ if and only if $\alpha(\frx)\le x$. Moreover, one also has an analog version of the lemma above.
\end{remark}
Recall from Subsection \ref{Subsect:App} that $(-)_p:\AP\to\TOP$ denotes the canonical forgetful functor from $\AP$ to $\TOP$, where $\frx\to x$ in $X_p$ if and only if $0=a(\frx,x)$ in the approach space $X=(X,a)$. If $X=(X,a)$ is also $\mU$-cocomplete with left adjoint $\alpha:(UX)_0\to X_0$, then, for any $\frx\in UX$ and $x\in X$,
\[
 \alpha(\frx)\le x\iff 0=a_0(\alpha(\frx),x)\iff 0=a(\frx,x)\iff\frx\to x.
\]
Here $\le$ denotes the underlying order of the underlying topology of $X$, which is the same as the underlying order of the underlying metric of $X$. Hence, $\alpha$ provides also a left adjoint to $e_X:X_{p0}\to U(X_p)_0$, and therefore the topological space $X_p$ is $\mU$-cocomplete as well. An important consequence of this fact is
\begin{proposition}\label{Prop:APcontrVsMetContr+Cont}
Let $X=(X,a)$ and $Y=(Y,b)$ be $\mU$-cocomplete approach spaces and $f:X\to Y$ be a map. Then $f:(X,a)\to(Y,b)$ is an approach map if and only if $f:(X,a_0)\to(Y,b_0)$ is a metric map and $f:X_p\to Y_p$ is continuous.
\end{proposition}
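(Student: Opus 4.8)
The plan is to build directly on the lemma immediately preceding this statement, which already characterises approach maps between $\mU$-cocomplete approach spaces as precisely those maps $f$ that are metric maps $f:(X,a_0)\to(Y,b_0)$ and satisfy $\beta\cdot Uf(\frx)\le f\cdot\alpha(\frx)$ for all $\frx\in UX$. Since the right-hand side of both the present statement and that lemma contains the factor ``$f$ is a metric map'', it suffices to show that, \emph{under the assumption that $f:(X,a_0)\to(Y,b_0)$ is a metric map}, the inequality $\beta\cdot Uf(\frx)\le f\cdot\alpha(\frx)$ (for all $\frx\in UX$) is equivalent to continuity of $f:X_p\to Y_p$.

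The key tool is the convergence characterisation recorded in the discussion just before the statement: for a $\mU$-cocomplete approach space, $\frx\to x$ in $X_p$ precisely when $\alpha(\frx)\le x$ (and likewise $\fry\to y$ in $Y_p$ precisely when $\beta(\fry)\le y$), where $\le$ is the common underlying order of the metric $a_0$ and of the topology $X_p$. Recall also that continuity of $f:X_p\to Y_p$ means, in the language of ultrafilter convergence, that $\frx\to x$ implies $Uf(\frx)\to f(x)$ for all $\frx\in UX$ and $x\in X$.

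For the forward direction I would instantiate continuity at the ``generic'' point $x=\alpha(\frx)$: since $\alpha(\frx)\le\alpha(\frx)$ by reflexivity, we have $\frx\to\alpha(\frx)$ in $X_p$, whence continuity yields $Uf(\frx)\to f(\alpha(\frx))$ in $Y_p$, which unwinds precisely to $\beta\cdot Uf(\frx)\le f\cdot\alpha(\frx)$. For the converse, suppose the inequality holds for all $\frx$ and let $\frx\to x$, i.e.\ $\alpha(\frx)\le x$. As a metric map, $f$ is monotone for the underlying orders, so $f(\alpha(\frx))\le f(x)$; combining this with $\beta\cdot Uf(\frx)\le f\cdot\alpha(\frx)$ gives $\beta\cdot Uf(\frx)\le f(x)$, that is, $Uf(\frx)\to f(x)$ in $Y_p$. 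Hence $f$ is continuous, and the two conditions are equivalent.

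The argument is essentially formal bookkeeping, and I do not expect a serious obstacle. The one genuine idea is the reduction in the forward direction to the single instance $x=\alpha(\frx)$, which works because reflexivity of $\le$ makes every $\frx$ converge to its own $\alpha$-value. The only point demanding care is to invoke correctly that the underlying order of $X_p$ coincides with that of $X_0$, so that ``metric map implies monotone'' can feed into the topological convergence condition in the converse direction; this is exactly the identification stated in the paragraph preceding the proposition.
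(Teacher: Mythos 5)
Your proof is correct and follows exactly the route the paper intends: it combines the preceding lemma (approach map $\iff$ metric map plus $\beta\cdot Uf\le f\cdot\alpha$) with the convergence characterisation $\frx\to x\iff\alpha(\frx)\le x$ established just before the proposition, which is why the paper states the result as an immediate consequence without a written proof. Your instantiation at $x=\alpha(\frx)$ and the use of monotonicity plus transitivity in the converse are precisely the missing bookkeeping steps, so nothing is lacking.
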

Finally, we also observe that $\mU$-cocomplete approach spaces are stable under standard constructions: both $X\oplus Y$ and $X\times Y$ are $\mU$-cocomplete, provided that $X=(X,a)$ and $Y=(Y,b)$ are so.

\subsection{Op-continuous lattices with an $[0,\infty]$-action}
\label{SubSect:OpContLatAct}

We call an approach space $X$ \emph{absolutely cocomplete} if the Yoneda embedding $\yoneda_X:X\to PX$ has a left adjoint in $\AP$. This is to say, $X$ is cocomplete and the metric left adjoint $\Sup_X$ of $\yoneda_X$ is actually an approach map $\Sup_X:PX\to X$. It is shown in \citep{Hof_Cocompl} that 
\begin{itemize}
\item the absolutely cocomplete approach spaces are precisely the injective ones, and that
\item  the category \[\COCTSAP\] of absolutely cocomplete approach T$_0$ spaces and supremum preserving ($=$ left adjoint) approach maps is monadic over $\AP$, $\MET$ and $\SET$. The construction $X\mapsto PX$ is the object part of the left adjoint $P:\AP\to\COCTSAP$ of the inclusion functor $\COCTSAP\to\AP$, and the maps $\yoneda_X:X\to PX$ define the unit $\yoneda$ of the induced monad $\mP=\pmonad$ on $\AP$. Composing this monad with the adjunction $(-)_d\dashv(-):\AP\leftrightarrows\SET$ gives the corresponding monad on $\SET$.
\end{itemize}
This resembles very much well-known properties of injective topological T$_0$ spaces, which are known to be the algebras for the filter monad on $\TOP$, $\ORD$ and $\SET$, hence, by Remark \ref{rem:FilterOfOpens}, are precisely the (accordingly defined) absolutely cocomplete topological T$_0$ spaces. Furthermore, all information about the topology of an injective T$_0$ space is contain in its underlying order, and the ordered sets occurring this way are the \emph{op-continuous lattices}, i.e. the duals of continuous lattices\footnote{Recall that our underlying order is dual to the specialisation order.}, as shown in \cite{Sco_ContLat} (see Subsection \ref{Subsect:ContLat}). In the sequel we will write $\OPCONTLAT$ to denote the category of op-continuous lattices and maps preserving all suprema and down-directed infima. Note that $\OPCONTLAT$ is equivalent to the category of absolutely cocomplete topological T$_0$ spaces and left adjoints in $\TOP$, and of course also to the category $\CONTLAT$ of continuous lattices and maps preserving up-directed suprema and all infima.

These analogies make us confident that absolutely cocomplete approach T$_0$ spaces provide an interesting metric counterpart to (op-)continuous lattices. In fact, in \citep{Hof_DualityDistSp} it is shown that the approach structure of such a space is determined by its underlying metric, hence we are talking essentially about metric spaces here. Moreover, every absolutely cocomplete approach space is exponentiable in $\AP$ and the \emph{full} subcategory of $\AP$ defined by these spaces is Cartesian closed. Theorem \ref{thm:CharAbsCocompII} below exposes now a tight connection with op-continuous lattices: the absolutely cocomplete approach T$_0$ spaces are precisely the op-continuous lattices equipped with an unitary and associative action of $[0,\infty]$ in the monoidal category $\OPCONTLAT$.

Every approach space $X=(X,a)$ induces approach maps 
\begin{align*}
 X\oplus[0,\infty]\xrightarrow{\,\formalball_X\,}PX, &&
 UX\xrightarrow{\,\yonedaT_X\,}PX, &&
 X^I\xrightarrow{\,\fammod_{X,I}\,}PX\qquad\text{(where $I$ is compact Hausdorff)}.
\end{align*}
Exactly as in Subsection \ref{Subsect:TensMet}, $\formalball_X:X\oplus[0,\infty]\to PX$ is the mate of the composite
\[
(UX)^\op\oplus X\oplus [0,\infty]\xrightarrow{\,a\oplus 1\,}
[0,\infty]\oplus [0,\infty]\xrightarrow{\,+\,}[0,\infty],
\]
and $\fammod_{X,I}:X^I\to PX$ is the mate of the composite
\[
 (UX)^\op\oplus X^I\to[0,\infty]^I\xrightarrow{\,\inf\,}[0,\infty],
\]
where the first component is the mate of the composite
\[
 (UX)^\op\oplus X^I\oplus I\xrightarrow{\,1\oplus\ev\,} (UX)^\op\oplus X
\xrightarrow{\,a\,}[0,\infty].
\]
Explicitely, for $\varphi\in X^I$ and $\frx\in UX$, $\fammod_{X,I}(\varphi)(\frx)=\inf_{i\in I}a(\frx,\varphi(i))$. A supremum of the ``down-set'' $\fammod_{X,I}(\varphi)\in PX$ is necessarily a supremum of the family $(\varphi(i))_{i\in I}$ in the underlying order of $X$.
If $X$ is cocomplete, one can compose the maps above with $\Sup_X$ and obtains metric maps
\begin{align}\label{Eq:ThreeMaps}
X_0\oplus[0,\infty]\xrightarrow{\,+\,}X_0, &&
(UX)_0\xrightarrow{\,\alpha\,} X_0, &&
(X^I)_0\xrightarrow{\,\bigvee\,}X_0\qquad\text{($I$ compact Hausdorff)},
\end{align}
which are even morphisms in $\AP$ provided that $X$ is absolutely cocomplete. In fact, one has
\begin{proposition}\label{Prop:CharAbsCocompI}
Let $X$ be an approach space. Then $X$ is absolutely cocomplete if and only if $X$ is cocomplete and the three maps \eqref{Eq:ThreeMaps} are approach maps.
\end{proposition}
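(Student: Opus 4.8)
The plan is to prove the two implications separately, with essentially all the work in the ``if'' direction. For the ``only if'' direction, suppose $X$ is absolutely cocomplete, so that $\Sup_X\colon PX\to X$ is an approach map. Recall from the paragraph preceding the statement that the three maps in \eqref{Eq:ThreeMaps} are by definition the composites $\Sup_X\cdot\formalball_X$, $\Sup_X\cdot\yonedaT_X$ and $\Sup_X\cdot\fammod_{X,I}$ of $\Sup_X$ with the approach maps $\formalball_X$, $\yonedaT_X$ and $\fammod_{X,I}$; being composites of approach maps, they are approach maps. Since absolute cocompleteness includes cocompleteness, this direction is immediate.

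For the ``if'' direction, assume $X$ is cocomplete---so $\Sup_X\colon(PX)_0\to X_0$ is a metric map---and that the three maps \eqref{Eq:ThreeMaps} are approach maps; we must upgrade $\Sup_X$ to an approach map. The key is the formula \eqref{Eq:FormWeightSupAp}, which exhibits $\Sup_X(\psi)=\bigvee_{\frx\in UX}\bigl(\alpha(\frx)+\psi(\frx)\bigr)$ as an order-theoretic supremum of a family indexed by $UX$. Let $I$ denote the set $UX$ equipped with its compact Hausdorff topology, i.e.\ with ultrafilter convergence $m_X$ (this is the topology underlying the metric compact Hausdorff space $MX$). Then $I$ is compact Hausdorff, so $\bigvee\colon X^I\to X$ is one of the assumed approach maps, and I would factor $\Sup_X=\bigvee\cdot g$ through the ``form the family'' map $g\colon PX\to X^I$, $g(\psi)=\bigl(\frx\mapsto\alpha(\frx)+\psi(\frx)\bigr)$. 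By the exponential adjunction $I\oplus-\dashv(-)^I$ it suffices to show that the transpose
\[
\tilde g\colon I\oplus PX\to X,\qquad (\frx,\psi)\mapsto\alpha(\frx)+\psi(\frx),
\]
is an approach map.

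To build $\tilde g$ from the data at hand I would proceed in four steps. First, two identity-on-points maps are approach maps: $\iota\colon I\to UX$ into the approach space $UX=KM X$, and $j\colon I\to(UX)^\op$ into its dual; in both cases the only nontrivial convergences in $I$ occur when $m_X(\frX)=\frx$, where the relevant distance $d(m_X(\frX),\frx)$, respectively $d(\frx,m_X(\frX))$, vanishes. Consequently $E:=\ev\cdot(j\oplus 1_{PX})\colon I\oplus PX\to[0,\infty]$, $(\frx,\psi)\mapsto\psi(\frx)$, is an approach map (here $\ev\colon(UX)^\op\oplus PX\to[0,\infty]$ is the Yoneda evaluation). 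Since $I$ is topological we have $I\oplus(-)=I\times(-)$, so $\langle\pi_1,E\rangle\colon I\times PX\to I\times[0,\infty]$ is an approach map into the product; reading its codomain as $I\oplus[0,\infty]$ (again using that $I$ is topological), I apply $(\alpha\cdot\iota)\oplus 1\colon I\oplus[0,\infty]\to X\oplus[0,\infty]$ and finally $+\colon X\oplus[0,\infty]\to X$. The composite sends $(\frx,\psi)\mapsto(\frx,\psi(\frx))\mapsto(\alpha(\frx),\psi(\frx))\mapsto\alpha(\frx)+\psi(\frx)$, which is exactly $\tilde g$; as a composite of approach maps it is an approach map. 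Then $\Sup_X=\bigvee\cdot g$ is an approach map and $X$ is absolutely cocomplete.

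The main obstacle is the variance clash together with the shared variable $\frx$: the weight $\psi(\frx)$ is controlled only through $\psi\colon(UX)^\op\to[0,\infty]$ (the ``dual'' structure), whereas both $\alpha$ and the indexing of the join live on $UX$ covariantly, and the two occurrences of $\frx$ cannot be separated into a product map. The factorization above dissolves this in one stroke: passing to the topological space $I=(UX,m_X)$ collapses the op-distance on convergent ultrafilters (making $j$, hence the evaluation $E$, an approach map), and topologicality turns $\oplus$ into $\times$ on the $I$-factor, supplying the diagonal implicit in $\langle\pi_1,E\rangle$ and letting me defer the application of $\alpha$ until after the evaluation has been extracted. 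Verifying carefully that $I$ is legitimately compact Hausdorff and that $\iota$ and $j$ are approach maps is where the real care is needed.
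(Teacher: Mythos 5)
Your proof is correct and takes essentially the same route as the paper's: the same compact Hausdorff index space (the \v{C}ech--Stone compactification $(UX,m_X)$, which the paper writes as $U(X_d)$), the same reduction via $\bigvee:X^I\to X$ and the exponential adjunction to the transpose $(\frx,\psi)\mapsto\alpha(\frx)+\psi(\frx)$, and the same ingredients for assembling that transpose (the identity maps into $UX$ and $(UX)^\op$, the evaluation, $\alpha$, and $+$). Your pairing $\langle\pi_1,E\rangle$ is just the paper's diagonal $\Delta\oplus 1$ followed by $\alpha\oplus\ev$ with the application of $\alpha$ deferred, so the two composites coincide.
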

\begin{proof}
The obtain the reverse implication, we have to show that the mapping
\[
 \Sup_X:PX\to X,\,\psi\mapsto\bigvee_{\frx\in UX}(\alpha(\frx)+\psi(\frx))
\]
is an approach map. We write $X_d$ for the discrete approach space with underlying set $X$, then $U(X_d)$ is just a compact Hausdorff space, namely the \v{C}ech-Stone compactification of the set $X$. By assumption, $\bigvee:X^{U(X_d)}\to X$ is an approach map, therefore it is enough to show that
\[
 U(X_d)\oplus PX\to X,\,(\frx,\psi)\mapsto\alpha(\frx)+\psi(\frx)
\]
belonges to $\AP$. Since the diagonal $\Delta:U(X_d)\to U(X_d)\oplus U(X_d)$ as well as the identity maps $U(X_d)\to UX$ and $U(X_d)\to (UX)^\op$ are in $\AP$, we can express the map above as the composite
\[
 U(X_d)\oplus PX\xrightarrow{\,\Delta\oplus 1\,}
UX\oplus (UX)^\op\oplus PX\xrightarrow{\,\alpha\oplus\ev\,}
X\oplus[0,\infty]\xrightarrow{\,+\,} X
\]
of approach maps.
\end{proof}

\begin{example}
The approach space $[0,\infty]$ is injective and hence absolutely cocomplete, but $[0,\infty]^\op$ is not injective. To see this, either observe that the map 
\[
f:\{0,\infty\}\to[0,\infty]^\op,\, 0\mapsto\infty,\infty\mapsto 0
\]
cannot be extended along the subspace inclusion $\{0,\infty\}\hrw[0,\infty]$, or that the mapping $(u,v)\mapsto u\trunminus v$ (which is the tensor of the metric space $[0,\infty]^\op$) is not an approach map of type $[0,\infty]^\op\oplus[0,\infty]\to[0,\infty]^\op$. Therefore $[0,\infty]^\op$ is not absolutely cocomplete, however, recall from Example \ref{ex:CocomplNotUCocompl} that $[0,\infty]^\op$ is cocomplete.
\end{example}

\begin{remark}
Similarly, a topological space $X$ is absolutely cocomplete if and only if $X$ is cocomplete and the latter two maps of \eqref{Eq:ThreeMaps} (accordingly defined) are continuous. 
\end{remark}

\begin{lemma}\label{Lem:XpowerIisUcocomplete}
Let $X$ be an approach space and $I$ be a compact Hausdorff space. If $X$ is cocomplete and $X_p$ is absolutely cocomplete, then $X^I$ is $\mU$-cocomplete.
\end{lemma}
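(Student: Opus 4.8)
The plan is to realise the defining property of $\mU$-cocompleteness directly: by the proposition characterising $\mU$-cocomplete approach spaces it suffices to produce a left adjoint $\alpha_{X^I}\colon\bigl(U(X^I)\bigr)_0\to(X^I)_0$ to $e_{X^I}$ in $\MET$. I will use throughout that $X$ cocomplete forces $X$ to be $\mU$-cocomplete, so that there is a metric left adjoint $\alpha\colon(UX)_0\to X_0$ to $e_X$ satisfying
\[
 a(\frx,x)=a_0(\alpha(\frx),x)\qquad(\frx\in UX,\ x\in X),
\]
and that moreover $X_0$ is a \emph{complete} metric space. Since $I$ is compact Hausdorff, hence exponentiable, Lemma \ref{Lem:PowerVsUnderlyingTop} gives $(X^I)_p=(X_p)^I$; in particular an element of $X^I$ is a continuous map $I\to X_p$, and $(X^I)_0$ is the subspace of $X_0^{I}$ carrying the sup-metric $[k,h]=\sup_{i\in I}a_0(k(i),h(i))$.

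Because passing to underlying orders turns a metric adjunction into an order adjunction, the underlying map of any candidate $\alpha_{X^I}$ is forced to be the generic-limit map of the topological space $(X^I)_p=(X_p)^I$. The first and conceptually central step is therefore to show that $(X_p)^I$ is $\mU$-cocomplete, and this is exactly where the hypothesis that $X_p$ is \emph{absolutely} cocomplete is spent. Here I would argue that absolutely cocomplete (equivalently, injective) topological spaces are closed under powers by exponentiable spaces: given an embedding $j\colon A\hookrightarrow B$ and a map $A\to(X_p)^I$, transpose it across $A\times I\to X_p$, extend along the embedding $j\times 1_I\colon A\times I\hookrightarrow B\times I$ using injectivity of $X_p$, and transpose back. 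Thus $(X_p)^I$ is itself absolutely cocomplete, in particular $\mU$-cocomplete, so it possesses a continuous generic-limit map $\alpha_{X^I}\colon U\bigl((X_p)^I\bigr)\to(X_p)^I$ with $\frP\to\alpha_{X^I}(\frP)$ for every $\frP$. Via the identification $(X^I)_p=(X_p)^I$ this is a genuine map $U(X^I)\to X^I$, which settles well-definedness.

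It remains to check that this $\alpha_{X^I}$ is a metric left adjoint to $e_{X^I}$, i.e.\ that $[\alpha_{X^I}(\frP),h]=\fspstr{\frP}{h}$ for all $\frP\in U(X^I)$ and $h\in X^I$. Since $I$ is topological, $X^I\oplus I=X^I\times I$, and in the function-space formula \eqref{Eq:FunSpStr} the truncation by $a_I$ selects, for each $\frw\in U(X^I\times I)$ with $U\pi_1(\frw)=\frP$, the unique point $i=\lim U\pi_2(\frw)$; using $a(\frz,x)=a_0(\alpha(\frz),x)$ this rewrites $\fspstr{\frP}{h}$ as $\sup_{\frw}a_0\bigl(\alpha(U\ev(\frw)),h(\lim U\pi_2(\frw))\bigr)$. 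For $[\alpha_{X^I}(\frP),h]\geqslant\fspstr{\frP}{h}$ I would note that $\frP\to\alpha_{X^I}(\frP)$ in $(X_p)^I$ together with $U\pi_2(\frw)\to i$ gives $\frw\to(\alpha_{X^I}(\frP),i)$, so continuity of $\ev$ yields $\alpha(U\ev(\frw))\le\alpha_{X^I}(\frP)(i)$ and each summand is $\le a_0(\alpha_{X^I}(\frP)(i),h(i))\le[\alpha_{X^I}(\frP),h]$. For $[\alpha_{X^I}(\frP),h]\leqslant\fspstr{\frP}{h}$ I would use completeness of $X_0$ to write $\alpha_{X^I}(\frP)(i)=\bigvee\{\alpha(U\ev(\frw))\mid U\pi_1(\frw)=\frP,\ \lim U\pi_2(\frw)=i\}$ and the fact that $a_0(-,h(i))\colon X_0\to[0,\infty]$ preserves this supremum (condition (vi) of the theorem on cocomplete metric spaces), so that $a_0(\alpha_{X^I}(\frP)(i),h(i))$ becomes a supremum of terms already appearing in $\fspstr{\frP}{h}$. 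Once the identity holds, $\alpha_{X^I}$ is automatically a metric map and a left adjoint of $e_{X^I}$, and hence $X^I$ is $\mU$-cocomplete.

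The step I expect to be the main obstacle is the supremum description of $\alpha_{X^I}(\frP)(i)$ needed for $[\alpha_{X^I}(\frP),h]\leqslant\fspstr{\frP}{h}$: proving $\alpha_{X^I}(\frP)(i)\le\bigvee\{\alpha(U\ev(\frw))\mid\ldots\}$ means relating the generic limit in the (finer-than-pointwise) exponential topology of $(X_p)^I$ to the pointwise data, and it is precisely here that the topological backbone supplied by $X_p$ being absolutely cocomplete must be reconciled with the metric completeness of $X_0$ coming from $X$ cocomplete.
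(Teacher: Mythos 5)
Your setup is sound and your two reductions are the right ones: the $\geqslant$ direction (each term $a_0(\alpha(U\ev(\frw)),h(i))$ is dominated by $a_0(\alpha_{X^I}(\frP)(i),h(i))$ via product convergence and continuity of $\ev$) is correct, and you correctly identify that everything hinges on the pointwise description
\[
 \alpha_{X^I}(\frP)(i)=\bigvee\{\alpha(U\!\ev(\frw))\mid U\pi_1(\frw)=\frP,\ \beta\cdot U\pi_2(\frw)=i\}.
\]
But this identity is exactly where the proof lives, and you have not proved it; the justification you sketch (completeness of $X_0$ plus sup-preservation of $a_0(-,h(i))$) only handles the metric computation \emph{after} the identity is available, not the identity itself. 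The obstruction is that your $\alpha_{X^I}$ is defined through the exponential topology of $(X_p)^I$, where the generic limit is $\bigwedge_{\calA\in\frP}\bigvee_{k\in\calA}k$ computed with the \emph{lattice operations of the function space} $(X_p)^I$ --- and suprema in that lattice are not pointwise in general, since a pointwise supremum of continuous maps $I\to X_p$ need not be continuous. So injectivity of $(X_p)^I$ (which you obtain correctly by the transpose--extend--transpose argument) gives you existence of a continuous generic-limit map, but no access to its values; there is no direct route from there to the pointwise formula.

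The paper resolves precisely this difficulty by running the argument in the opposite direction: it \emph{defines} $\gamma(\frp)(i)$ as the pointwise supremum, gets the metric adjunction identity $\fspstr{\frp}{h}=\sup_{i}a_0(\gamma(\frp)(i),h(i))$ essentially for free (moving the supremum inside $a_0(-,h(i))$, which is legitimate because $X_0$ is cocomplete), and then does the real work: proving that $\gamma(\frp):I\to X_p$ is continuous, so that $\gamma(\frp)$ is a genuine element of $X^I$. That continuity proof is nontrivial --- it factors $\gamma(\frp)$ through the compact Hausdorff space $Y=\{\frw\in U(X^I\times I)\mid U\pi_1(\frw)=\frp\}\subseteq U((X^I\times I)_d)$ and uses that $\bigvee:X_p^Y\to X_p$, $\alpha:U(X_d)\to X_p$ and the action $\und:X_p\times\two\to X_p$ are all continuous, which is where the hypothesis that $X_p$ is absolutely cocomplete is actually spent. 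Once that is done, your identity does hold (both maps are left adjoints of $e$ on the underlying structures, hence agree up to equivalence), but it is a consequence of the paper's construction rather than an available input to yours. As it stands, your proposal defers the crux to a step you flag as ``the main obstacle'' and leave open, so it is not a complete proof.
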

\begin{proof}
We write $a:UX\times X\to[0,\infty]$ for the convergence structure of the approach space $X$, and $b:UI\times I\to\two$ for the convergence structure of the compact Hausdorff space $I$. In the both cases there are maps $\alpha:UX\to X$ and $\beta:UI\to I$ respectively so that $a(\frx,x)=a_0(\alpha(\frx),x)$ and $b(\fru,i)=\true$ if and only if $\beta(\fru)=i$, for all $\frx\in UX$, $x\in X$, $\fru\in UI$ and $i\in I$.
For every $\frp\in U(X^I)$ and $h\in X^I$,
\begin{align*}
 \fspstr{\frp}{h}
&=\sup\{a_0(\alpha(U\ev(\frw)),h(\beta(\fru))\mid \frw\in U(X^I\times I),\frw\mapsto\frp,(\frw\mapsto\fru)\}\\
&=\sup_{i\in I}\sup_{\substack{\frw\in U(X^I\times I)\\ U\pi_1(\frw)=\frp\\ \beta\cdot U\pi_2(\frw)=i}}a_0(\alpha(U\ev(\frw)),h(i))\\
&=\sup_{i\in I}a_0(\bigvee_{\substack{\frw\in U(X^I\times I)\\ U\pi_1(\frw)=\frp\\ \beta\cdot U\pi_2(\frw)=i}}\alpha(U\ev(\frw)),h(i))\\
&=\sup_{i\in I}a_0(\gamma(\frp)(i),h(i)),
\end{align*}
where we define
\[
 \gamma(\frp)(i)=\bigvee_{\substack{\frw\in U(X^I\times I)\\ U\pi_1(\frw)=\frp\\ \beta\cdot U\pi_2(\frw)=i}}\alpha(U\ev(\frw)).
\]
In order to conclude that $\gamma$ is a map of type $U(X^I)\to X^I$, we have to show that $\gamma(\frp)$ is a continuous map $\gamma(\frp):I\to X_p$, for every $\frp\in U(X^I)$. To this end, we note first that the supremum above can be rewritten as
\[
 \gamma(\frp)(i)=\bigvee_{\substack{\frw\in U(X^I\times I)\\ U\pi_1(\frw)=\frp}}\alpha(U\ev(\frw))\,\&\, b(U\pi_2(\frw),i).
\]
We put $Y=\{\frw\in U(X^I\times I)\mid U\pi_1(\frw)=\frp\}$ and consider $Y$ as a subspace of $U((X^I\times I)_d)$, that is, the \v{C}ech-Stone compactification of the set $X^I\times I$. Note that $Y$ is compact, and one has continuous maps
\begin{align*}
 Y\xrightarrow{\,U\!\ev\,}U(X_d), &&
 Y\xrightarrow{\,U\pi_2\,}U(I_d), &&
 U(X_d)\xrightarrow{\,\alpha\,}X_p, &&
 U(I_d)\times I\xrightarrow{\,b\,}\two.
\end{align*}
Therefore we can express the map $\gamma(\frp)$ as the composite
\[
 I\longrightarrow X_p^I\xrightarrow{\,\bigvee\,}X_p
\]
of continuous maps, where the first component is the mate of the composite
\[
 Y\times I\xrightarrow{\,\Delta\times 1\,}
 Y\times Y\times I\xrightarrow{\,U\!\ev\times U\pi_2\times 1\,}
 U(X_d)\times U(I_d)\times I\xrightarrow{\,\alpha\times b\,}
 X_p\times\two\xrightarrow{\,\&\,}X_p
\]
of continuous maps.
\end{proof}

\begin{proposition}\label{Prop:Cocompl+OPcont}
Let $X=(X,d)$ be a cocomplete metric space whose underlying ordered set $X_p$ is a op-continuous lattice. Then $(X,d,\alpha)$ is a metric compact Hausdorff space where $\alpha:UX\to X$ is defined by $\displaystyle{\frx\mapsto\bigwedge_{A\in\frx}\bigvee_{x\in X}x}$. 
\end{proposition}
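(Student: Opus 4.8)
The plan is to establish the two defining ingredients of a metric compact Hausdorff space separately: first that $\alpha$ is (the convergence of) a compact Hausdorff topology on $X$, and second that $\alpha\colon U(X,d)\to(X,d)$ is a metric map. Recall from the identification $\METCH\simeq\MET^{\mU}$ that $(X,d,\alpha)$ is a metric compact Hausdorff space precisely when these two hold; the Eilenberg--Moore equations $\alpha\cdot e_X=1_X$ and $\alpha\cdot U\alpha=\alpha\cdot m_X$ are equations of maps of sets and so belong to the first ingredient. The unit law is immediate: for a principal ultrafilter the family $\{\,\bigvee_{x\in A}x\mid A\in\doo{y}\,\}$ has least member $\bigvee_{x\in\{y\}}x=y$, so $\alpha(\doo{y})=y$. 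The first ingredient will use that $X_p$ is op-continuous, the second only that $(X,d)$ is cocomplete.

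For the compact Hausdorff structure I would invoke the classical theory recalled in Subsections \ref{Subsect:ContLat} and \ref{Subsect:StablyCompSp}. Since $X_p$ is op-continuous, it is the underlying order of an injective $\mathrm{T}_0$ topological space $Y$, whose ultrafilter convergence is exactly $\frx\to x\Leftrightarrow\bigwedge_{A\in\frx}\bigvee_{x\in A}x\le x$, that is, $\frx\to x\Leftrightarrow\alpha(\frx)\le x$. Being an injective $\mathrm{T}_0$ space, $Y$ is stably compact, hence of the form $Y=K(X,\le,\alpha')$ for an ordered compact Hausdorff space $(X,\le,\alpha')$; comparing the two descriptions of convergence and noting that $\alpha(\frx)$ and $\alpha'(\frx)$ are both the least $x$ with $\frx\to x$, anti-symmetry of $X_p$ forces $\alpha'=\alpha$. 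Thus $\alpha$ is an Eilenberg--Moore algebra for $\mU$ on $\ORD$: it is the convergence of a compact Hausdorff topology, it is monotone for $X_p$, and the associativity equation holds.

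It remains to verify that $\alpha$ is a metric map, i.e.\ $d(\alpha(\frx),\alpha(\fry))\le Ud(\frx,\fry)$. Writing $s_A=\bigvee_{x\in A}x$ and $t_B=\bigvee_{y\in B}y$, cocompleteness of $X$, and of $X^{\op}$ as well (complete and cocomplete coincide), yields the colimit formulas $d(\bigvee_{x\in A}x,z)=\sup_{x\in A}d(x,z)$ and $d(w,\bigwedge_{B}t_B)=\sup_{B}d(w,t_B)$. Combining these with the monotonicity of $d(-,t_B)$ (so $d(\bigwedge_A s_A,t_B)\le d(s_{A'},t_B)$) and the antitonicity of $d(x,-)$ (so $d(x,t_B)\le\inf_{y\in B}d(x,y)$) gives, for each $B\in\fry$,
\[
 d\Bigl(\bigwedge_{A}s_A,\ t_B\Bigr)\ \le\ \inf_{A'\in\frx}\ \sup_{x\in A'}\ \inf_{y\in B}d(x,y).
\]
The key elementary fact is that along an ultrafilter one has $\inf_{A\in\frx}\sup_{x\in A}g(x)=\sup_{A\in\frx}\inf_{x\in A}g(x)$ for every $g\colon X\to[0,\infty]$; applied to $g(x)=\inf_{y\in B}d(x,y)$, the right-hand side above equals $\sup_{A\in\frx}\inf_{x\in A,\,y\in B}d(x,y)$. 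Taking the supremum over $B\in\fry$ and using $d(\alpha(\frx),\alpha(\fry))=\sup_{B}d(\bigwedge_A s_A,t_B)$ then gives $d(\alpha(\frx),\alpha(\fry))\le\sup_{A,B}\inf_{x\in A,\,y\in B}d(x,y)=Ud(\frx,\fry)$, as required.

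The main obstacle is the compact Hausdorff part: that $\alpha$ defines a Hausdorff convergence and satisfies $\alpha\cdot U\alpha=\alpha\cdot m_X$ is exactly where op-continuity of $X_p$ is indispensable, since for a general complete lattice the $\limsup$-formula need not give an Eilenberg--Moore algebra at all; I would discharge it through the classical identification of (op-)continuous lattices with stably compact spaces rather than by a direct manipulation of $m_X$. By contrast the metric-map inequality, which at first sight looks like the delicate point, falls out of the cocompleteness formulas together with the ultrafilter identity above, with no appeal to op-continuity.
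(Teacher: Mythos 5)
Your proof is correct. For the compact Hausdorff half you do what the paper does: its entire argument is the one-line remark that op-continuity of $X_p$ makes $X_p$ an ordered compact Hausdorff space with convergence $\alpha$, i.e.\ the same appeal to classical continuous-lattice theory that you spell out via injective T$_0$ spaces and stable compactness. Where you genuinely diverge is the proof that $\alpha:U(X,d)\to(X,d)$ is a metric map. The paper first notes that $UX$ is tensored (Lemma \ref{Lem:UXisTensored}) so that Proposition \ref{Prop:ContrTensMet} applies, and then makes two separate verifications: the tensor inequality $\alpha(\frx)+u\le\alpha(\frx+u)$ (because $-+u$ preserves suprema) and monotonicity of $\alpha$, the latter by an $\eps$-argument resting on the identity $\sup_{A\in\frx,B\in\fry}\inf_{x\in A,y\in B}d(x,y)=\sup_{B\in\fry}\inf_{A\in\frx}\sup_{x\in A}\inf_{y\in B}d(x,y)$ quoted from Seal. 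You instead prove $d(\alpha(\frx),\alpha(\fry))\le Ud(\frx,\fry)$ in one stroke, using that in a cocomplete ($=$ complete) metric space one has $d(\bigvee_{x\in A}x,z)=\sup_{x\in A}d(x,z)$ and $d(w,\bigwedge_{B\in\fry}t_B)=\sup_{B\in\fry}d(w,t_B)$ (both are indeed valid: the first is condition (vi) of the theorem of Subsection \ref{Subsect:TensMet}, via the lemma closing Subsection \ref{SubSect:CocomplMetSp}, and the second is its dual, available because complete and cocomplete coincide), together with the ultrafilter interchange $\inf_{A\in\frx}\sup_{x\in A}g(x)=\sup_{A\in\frx}\inf_{x\in A}g(x)$, which is exactly the content of the Seal identity the paper cites. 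So both arguments rest on the same two pillars — cocompleteness turning order-theoretic suprema and infima into ``metric'' ones, and the sup--inf interchange along an ultrafilter — but your decomposition is more direct and bypasses the tensored-space machinery of Lemma \ref{Lem:UXisTensored} and Proposition \ref{Prop:ContrTensMet} entirely, at the cost of invoking the infimum formula (completeness) alongside the supremum formula; the paper's route is slightly longer but factors the verification through two reusable general statements that separate the order-theoretic data from the $[0,\infty]$-action data, which is the decomposition driving the remainder of the paper.
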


\begin{proof}
Since $X_p$ is op-continuous, $X_p$ is even an ordered compact Hausdorff space with convergence $\alpha$. We have to show that $\alpha:U(X,d)\to(X,d)$ is a metric map. Recall from Lemma \ref{Lem:UXisTensored} that with $(X,d)$ also $U(X,d)$ is tensored, hence we can apply Proposition \ref{Prop:ContrTensMet}. Firstly, for $\frx\in UX$ and $u\in[0,\infty]$,
\[
 \alpha(\frx)+u=\left(\bigwedge_{A\in\frx}\bigvee_{x\in X}x\right)+u
\le\bigwedge_{A\in\frx}\bigvee_{x\in X}(x+u)=\alpha(\frx+u)
\]
since $-+u:X\to X$ preserves suprema. Secondly, let $\frx,\fry\in UX$ and assume
\[
 0=Ud(\frx,\fry)=\sup_{A\in\frx,B\in\fry}\inf_{x\in A,y\in B}d(x,y)
=\sup_{B\in\fry}\inf_{A\in\frx}\sup_{x\in A}\inf_{y\in B}d(x,y).
\]
For the last equality see \citep[Lemma 6.2]{SEAL_LaxAlg}, for instance. We wish to show that $\alpha(\frx)\le\alpha(\fry)$, that is,
\[
 \bigwedge_{A\in\frx}\bigvee_{x\in A}x\le \bigwedge_{B\in\fry}\bigvee_{y\in B}y,
\]
which is equivalent to $\displaystyle{\bigwedge_{A\in\frx}\bigvee_{x\in A}x\le\bigvee_{y\in B}y}$, for all $B\in\fry$. Let $B\in\fry$ and $\eps>0$. By hypothesis, there exist some $A\in\frx$ with $\sup_{x\in A}\inf_{y\in B}d(x,y)<\eps$, hence, for all $x\in A$, there exist some $y\in B$ with $d(x,y)<\eps$ and therefore $x+\eps\le y$. Consequently, for all $\eps>0$,
\[
 \left(\bigwedge_{A\in\frx}\bigvee_{x\in A}x\right)+\eps\le
 \bigwedge_{A\in\frx}\bigvee_{x\in A}(x+\eps)\le \bigvee_{y\in B}y;
\]
and therefore also $\displaystyle{\bigwedge_{A\in\frx}\bigvee_{x\in A}x\le\bigvee_{y\in B}y}$.
\end{proof}

\begin{theorem}
Let $X$ be a T$_0$ approach space. Then the following assertions are equivalent.
\begin{eqcond}
\item $X$ is absolutely cocomplete.
\item $X$ is cocomplete, $X_p$ is absolutely cocomplete and $+:X\oplus[0,\infty]\to X$ is an approach map.
\item $X$ is cocomplete, $X_p$ is absolutely cocomplete and $+:X_p\times[0,\infty]_p\to X_p$ is continuous.
\item $X$ is $\mU$-cocomplete, $X_0$ is cocomplete, $X_p$ is absolutely cocomplete, and, for all $x\in X$ and $u\in[0,\infty ]$, the map $-+u:X\to X$ preserves down-directed infima and the map $x+-:[0,\infty]\to X$ sends up-directed suprema to down-directed infima.
\end{eqcond}
\end{theorem}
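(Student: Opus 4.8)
The plan is to establish $(i)\Leftrightarrow(ii)$ directly from Proposition~\ref{Prop:CharAbsCocompI} and then to close the cycle $(ii)\Rightarrow(iii)\Rightarrow(iv)\Rightarrow(ii)$. Two standing remarks organise the bookkeeping. First, by the characterisation of cocomplete approach spaces together with the fact that a metric space is complete iff cocomplete, the hypothesis ``$X$ is cocomplete'' is interchangeable with ``$X_0$ is cocomplete and $X$ is $\mU$-cocomplete''; hence the preambles of $(iii)$ and $(iv)$ agree. Second, the assumption that $X_p$ is absolutely cocomplete means $X_p$ is an injective $T_0$ space, so its underlying order is an op-continuous lattice and, by the criterion recalled in Subsection~\ref{Subsect:ContLat} (read in the underlying order, i.e.\ dually to the specialisation order), a monotone map between such spaces is continuous precisely when it preserves down-directed infima.

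For $(i)\Leftrightarrow(ii)$ I would use Proposition~\ref{Prop:CharAbsCocompI}, which says that $X$ is absolutely cocomplete iff $X$ is cocomplete and the three maps $+$, $\alpha$, $\bigvee$ of \eqref{Eq:ThreeMaps} are approach maps; the content is that, under the hypotheses of $(ii)$, the maps $\alpha$ and $\bigvee$ are automatically approach maps, so that only $+$ remains. Indeed $UX$ is $\mU$-cocomplete (it underlies the metric compact Hausdorff space $MX$) and, by Lemma~\ref{Lem:XpowerIisUcocomplete}, so is $X^I$; since both maps are already metric maps, Proposition~\ref{Prop:APcontrVsMetContr+Cont} reduces ``approach map'' to continuity of the underlying maps $\alpha\colon U(X_p)\to X_p$ and $\bigvee\colon (X_p)^I\to X_p$ (using $(UX)_p=U(X_p)$ and Lemma~\ref{Lem:PowerVsUnderlyingTop}), and these are continuous because $X_p$ is $\mU$-cocomplete, respectively absolutely cocomplete. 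Conversely, if $X$ is absolutely cocomplete then all three maps are approach maps, hence continuous on underlying topologies; since $X_p$ then has complete underlying order $(X_0)_p$ and $\alpha$ exhibits it as $\mU$-cocomplete, it is cocomplete, and the topological form of Proposition~\ref{Prop:CharAbsCocompI} yields that $X_p$ is absolutely cocomplete.

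The step $(ii)\Rightarrow(iii)$ is immediate, since an approach map induces a continuous map on underlying topologies and $(X\oplus[0,\infty])_p=X_p\times[0,\infty]_p$. For $(iii)\Rightarrow(iv)$ one precomposes $+$ with the continuous insertions $x\mapsto(x,u)$ and $u\mapsto(x,u)$ to see that $-+u\colon X_p\to X_p$ and $x+-\colon[0,\infty]_p\to X_p$ are continuous; translating continuity into preservation of down-directed infima, and recalling that the underlying order of $[0,\infty]_p$ is $\geqslant$, yields exactly the two preservation conditions of $(iv)$.

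The crux is $(iv)\Rightarrow(ii)$, where separate continuity must be upgraded to the two-variable statement that $+\colon X\oplus[0,\infty]\to X$ is an approach map. As $X\oplus[0,\infty]$ and $X$ are $\mU$-cocomplete, Proposition~\ref{Prop:APcontrVsMetContr+Cont} reduces this (the metric part being automatic for a tensored space) to the inequality $\alpha_X(U(+)(\frw))\leqslant\alpha_X(\frx)+\xi(\fru)$ for every $\frw\in U(X\times[0,\infty])$, where $\frx=U\pi_1(\frw)$ and $\fru=U\pi_2(\frw)$, using that the convergence of $X\oplus[0,\infty]$ is computed coordinatewise and the explicit formula $\alpha_X(\frp)=\bigwedge_{A\in\frp}\bigvee_{z\in A}z$ from Proposition~\ref{Prop:Cocompl+OPcont}. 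Since every $A\times B$ with $A\in\frx$, $B\in\fru$ lies in $\frw$, restricting the infimum to this subfamily gives an upper bound, and one peels off the operations one at a time:
\[
\bigwedge_{W\in\frw}\bigvee_{(x,u)\in W}(x+u)
\;\leqslant\;\bigwedge_{A\in\frx,\,B\in\fru}\Bigl(\bigl(\bigvee_{x\in A}x\bigr)+\inf_{u\in B}u\Bigr)
\;=\;\bigwedge_{B\in\fru}\bigl(\alpha_X(\frx)+\inf_{u\in B}u\bigr)
\;=\;\alpha_X(\frx)+\xi(\fru),
\]
where the passage to the right-hand side of the inequality uses that $x+-$ is anti-monotone with $\bigvee_{u\in B}(x+u)=x+\inf_{u\in B}u$ (property~\eqref{cond:action5}) and that $(-)+u$ preserves suprema; the first equality uses that $(-)+u$ preserves the down-directed infimum $\bigl(\bigvee_{x\in A}x\bigr)_{A\in\frx}$ (first part of $(iv)$); and the second equality uses that $\alpha_X(\frx)+-$ sends the up-directed supremum $\bigl(\inf_{u\in B}u\bigr)_{B\in\fru}$, whose value is $\xi(\fru)$, to the matching down-directed infimum (second part of $(iv)$). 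This proves the inequality, hence $(ii)$. I expect the real difficulty to be precisely this computation: keeping the dual conventions straight (suprema versus infima in $X$, and the reversed order on $[0,\infty]$) and checking that each of the three operations may legitimately be moved through the relevant directed (co)limit using exactly the hypotheses of $(iv)$ and cocompleteness; the remaining implications are comparatively formal bookkeeping over Propositions~\ref{Prop:CharAbsCocompI} and~\ref{Prop:APcontrVsMetContr+Cont}.
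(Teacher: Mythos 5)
Your cycle (ii)$\Rw$(iii)$\Rw$(iv)$\Rw$(ii) is essentially sound, and its core, the direct verification of $\alpha_X(U(+)(\frw))\leqslant\alpha_X(\frx)+\xi(\fru)$, is a genuinely different route from the paper's. The paper proves (i)$\Rw$(ii)$\Rw$(iii)$\Rw$(iv) as immediate and then shows (iv)$\Rw$(i) by checking all three maps of \eqref{Eq:ThreeMaps}; for the tensor it deduces \emph{joint} continuity of $+:X_p\times[0,\infty]_p\to X_p$ from separate continuity by citing \citep[Proposition 2.6]{Sco_ContLat} and then applies Proposition \ref{Prop:APcontrVsMetContr+Cont}. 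Your computation proves exactly the needed instance of that separate-implies-joint statement by hand: the steps (every $A\times B$ with $A\in\frx$, $B\in\fru$ lies in $\frw$; property (5) and preservation of suprema by $-+u$ in the cocomplete metric space $X_0$; then the two directed-limit hypotheses of (iv)) all check out, and identifying the structure map of $X\oplus[0,\infty]$ as $\frw\mapsto(\alpha_X(\frx),\xi(\fru))$ is legitimate. This buys a self-contained argument at the price of the bookkeeping you describe.

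However, your only route back to (i) is (ii)$\Rw$(i), and there is a genuine gap there, in the treatment of $\alpha$. You reduce ``$\alpha:UX\to X$ is an approach map'' to continuity of $\alpha:U(X_p)\to X_p$ ``using $(UX)_p=U(X_p)$''. That identification is false, and it fails in precisely the direction you need. The metric \eqref{Eq:MetricOnUX} on $UX$ gives $d(\fry,\frx)=0$ as soon as $A^{(\eps)}\in\frx$ for all $A\in\fry$ and all $\eps>0$, whereas the order underlying $U(X_p)$ requires $\overline{A}=A^{(0)}\in\frx$; hence $U(X_p)$-convergence implies $(UX)_p$-convergence but not conversely, so $(UX)_p$ is \emph{coarser} than $U(X_p)$ and continuity on $U(X_p)$ is the weaker statement. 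This happens even under the hypotheses of the theorem: for $X=[0,\infty]$, take $\fry=\doo{1}$ and $\frx$ any ultrafilter containing every interval $(1,1+\eps)$; then $\{1\}^{(\eps)}=[0,1+\eps]\in\frx$ for all $\eps>0$, so $d(\doo{1},\frx)=0$, while $\overline{\{1\}}=[0,1]\notin\frx$. Thus $\doo{\doo{1}}$ converges to $\frx$ in $(UX)_p$ but not in $U(X_p)$. Consequently, continuity of $\alpha$ on $U(X_p)$ --- which is what absolute cocompleteness of $X_p$ gives you (not $\mU$-cocompleteness, as you write; for topological spaces $\mU$-cocompleteness alone does not make $\alpha$ continuous) --- does not yield the continuity of $\alpha:(UX)_p\to X_p$ demanded by Proposition \ref{Prop:APcontrVsMetContr+Cont}, and the equivalence is not closed.

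The repair is the paper's own device, which you already quote for a different purpose: under (ii) (or (iv)), $X_0$ is a cocomplete metric space whose underlying ordered set is an op-continuous lattice, so Proposition \ref{Prop:Cocompl+OPcont} shows that $(X,a_0,\alpha)$ is a metric compact Hausdorff space, and structure maps of metric compact Hausdorff spaces are approach maps on the induced approach spaces (as recalled in the subsection on stably compact approach spaces); equivalently, one verifies the inequality $\alpha\cdot U\alpha\le\alpha\cdot m_X$ required by the lemma preceding Proposition \ref{Prop:APcontrVsMetContr+Cont}. Your treatment of $\bigvee:X^I\to X$ via Lemmas \ref{Lem:XpowerIisUcocomplete} and \ref{Lem:PowerVsUnderlyingTop} is fine as it stands (there the identification $(X^I)_p=(X_p)^I$ is an actual lemma of the paper), so replacing the $\alpha$-step closes the argument.
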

\begin{proof}
Clearly, (i)$\Rw$(ii)$\Rw$(iii)$\Rw$(iv). Assume now (iv). According to Proposition \ref{Prop:CharAbsCocompI}, we have to show that the three maps \eqref{Eq:ThreeMaps} are approach maps. We write $a:UX\times X\to[0,\infty]$ for the convergence structure of the approach space $X$, by hypothesis, $a(\frx,x)=d(\alpha(\frx),x)$ where $d$ is the underlying metric and $\alpha(\frx)=\displaystyle{\bigwedge_{A\in\frx}\bigvee_{x\in X}x}$. By Lemma \ref{Prop:Cocompl+OPcont}, $(X,d,\alpha)$ is a metric compact Hausdorff space and therefore $\alpha:UX\to X$ is an approach map. Since the metric space $X_0$ is cocomplete, $+:X_0\oplus[0,\infty]\to X_0$ and $\bigvee:X_0^I\to X_0$ ($I$ any set) are metric maps. If $I$ is a compact Hausdorff space, $(X^I)_0$ is a subspace of $X_0^{I_d}$, therefore also $\bigvee:(X^I)_0\to X_0$ is a metric map. Furthermore, since $X_p$ is absolutely cocomplete, $\bigvee:(X^I)_p=(X_p)^I\to X_p$ is continuous (see Lemma \ref{Lem:PowerVsUnderlyingTop}). Since $X^I$ is $\mU$-cocomplete by Lemma \ref{Lem:XpowerIisUcocomplete}, $\bigvee:X^I\to X$ is actually an approach map by Proposition \ref{Prop:APcontrVsMetContr+Cont}. Similarly, $+:(X\oplus[0,\infty])_0\to X_0$ is a metric map since $(X\oplus[0,\infty])_0=X_0\oplus[0,\infty]$. Our hypothesis states that $+:(X\oplus[0,\infty])_p=X_p\times[0,\infty]_p\to X_p$ is continuous in each variable, and \citep[Proposition 2.6]{Sco_ContLat} tells us that it is indeed continuous. By applying Proposition \ref{Prop:APcontrVsMetContr+Cont} again we conclude that $+:X\oplus[0,\infty]\to X$ is an approach map. 
\end{proof}

Note that the approach structure of an absolutely cocomplete T$_0$ approach space can be recovered from its underlying metric since the convergence $\alpha:UX\to X$ is defined by the underlying lattice structure. In fact, the Theorem above shows that an absolutely cocomplete T$_0$ approach space is essentially the same thing as a separated cocomplete metric space $X=(X,d)$ whose underlying ordered set is an op-continuous lattice (see Proposition \ref{Prop:Cocompl+OPcont}) and where the action $+:X\times[0,\infty]\to X$ preserves down-directed infima (in both variables). In the final part of this paper we combine this with Theorem \ref{Thm:TensMetVsAction} where separated cocomplete metric spaces are described as sup-lattices $X$ equipped with an unitary and associative action $+:X\times[0,\infty]\to X$ on the set $X$ which preserves suprema in each variable, or, equivalently, $+:X\otimes[0,\infty]\to X$ is in $\SUP$.

For $X,Y,Z$ in $\OPCONTLAT$, a map $h:X\times Y\to Z$ is a bimorphism if it is a morphism of $\OPCONTLAT$ in each variable.

\begin{proposition}
The category $\OPCONTLAT$ admits a tensor product which represents bimorphisms. That is, for all $X,Y$ in $\OPCONTLAT$, the functor
\[
 \Bimorph(X\times Y,-):\OPCONTLAT\to\SET
\]
is representable by some object $X\otimes Y$ in $\OPCONTLAT$.
\end{proposition}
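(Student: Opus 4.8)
The plan is to obtain $X\otimes Y$ as the representing object supplied by Freyd's Adjoint Functor Theorem, in exact analogy with the construction of the tensor product of $\SUP$ recalled in the remark following Theorem~\ref{Thm:TensMetVsAction}. Concretely, I would show that the functor $F:=\Bimorph(X\times Y,-)\colon\OPCONTLAT\to\SET$ preserves all small limits and that $\OPCONTLAT$ satisfies the hypotheses of the theorem (complete, locally small, well-powered, and equipped with a cogenerating set). Representability of $F$, hence the existence of $X\otimes Y$ with $\Bimorph(X\times Y,Z)\cong\OPCONTLAT(X\otimes Y,Z)$ naturally in $Z$, then follows by evaluating the resulting left adjoint at the one-point set.

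First I would record the structural facts about $\OPCONTLAT$. Since $\OPCONTLAT$ is equivalent to $\CONTLAT$ (see Subsection~\ref{SubSect:OpContLatAct}), which is monadic over $\SET$ via the filter monad, the category $\OPCONTLAT$ is complete and its forgetful functor to $\SET$ creates limits. In particular, products are formed componentwise, and the equalizer of a parallel pair is the subset on which the two maps agree; in both cases all suprema and down-directed infima are computed as in the ambient lattice (the agreement subset is closed under them precisely because the two maps preserve suprema and down-directed infima). The same equivalence furnishes well-poweredness and a cogenerating set, these being standard properties of (op-)continuous lattices inherited from $\CONTLAT$ and its cogenerator $[0,1]$.

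The substantive step is the limit-preservation of $F$. Given a limit $Z=\lim_i Z_i$ with projections $p_i$, a function $h\colon X\times Y\to Z$ is the same datum as a compatible family $h_i:=p_i\cdot h$, and I would argue that $h$ is a bimorphism if and only if every $h_i$ is. This reduces to the observation that, because limits are created by the forgetful functor, the suprema and down-directed infima of $Z$ are detected by the jointly monic family of projections $p_i$; consequently a map into $Z$ preserves suprema (resp. down-directed infima) in a fixed variable exactly when all of its composites with the $p_i$ do so. Splitting a general limit into products and equalizers, the product case is immediate from componentwise computation and the equalizer case from closure of the agreement subset, so that $F(\lim_i Z_i)\cong\lim_i F(Z_i)$.

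With limit-preservation established, Freyd's Adjoint Functor Theorem (see \citep[Section V.6]{MacLane_WorkMath}), whose solution-set condition is here guaranteed by well-poweredness together with the cogenerating set, produces the left adjoint to $F$ and hence the representing object $X\otimes Y$. The main obstacle I anticipate is not a single hard computation but the care needed to confirm that the bimorphism conditions genuinely pass through limits: one must check that the \emph{down-directed} infima (and not merely arbitrary infima, which morphisms of $\OPCONTLAT$ need not preserve) are recovered from the projections, so that preservation in each variable is correctly reflected by the limit cone. Once the limits of $\OPCONTLAT$ are pinned down as created by the forgetful functor, this verification is routine and the remaining hypotheses are standard.
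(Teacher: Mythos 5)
Your argument is correct, but it reaches the solution-set condition by a genuinely different route than the paper. Both proofs start identically: one checks that $\Bimorph(X\times Y,-)$ preserves limits (your reduction to products and equalizers, using that the forgetful functor of the monadic category $\OPCONTLAT\simeq\CONTLAT$ creates limits, is precisely the verification the paper leaves to the reader), and then one invokes Freyd's theorem. The divergence is in how the solution set is produced. You take the Special Adjoint Functor Theorem route: completeness, well-poweredness and a cogenerating set, the last being the claim that $[0,1]$ cogenerates $\CONTLAT$, i.e.\ that every continuous lattice embeds, by a map preserving up-directed suprema and all infima, into a power of $[0,1]$. That is true, but it is the fundamental theorem of compact semilattices (see \citep{Book_ContLat}) and is by far the deepest ingredient in your proof; it deserves to be cited as such rather than as a ``standard property''. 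The paper instead exhibits a solution set directly and more economically: a bimorphism $\varphi:X\times Y\to Z$ is in particular a function on underlying sets, hence extends along the unit to an $\OPCONTLAT$-morphism $f:F(X\times Y)\to Z$ from the free op-continuous lattice on the set $X\times Y$ (the filter lattice, i.e.\ the free algebra of the filter monad); a (regular epi, mono)-factorisation $f=m\cdot q$ then shows that $\varphi$ factors, as a bimorphism followed by a monomorphism, through an op-continuous lattice of cardinality at most $|F(X\times Y)|$, so a representing set of lattices of at most that cardinality is a solution set. This uses only the monadicity over $\SET$ already at hand in the paper. In short, your SAFT argument is valid and quick to state once its hypotheses are granted, at the price of importing the cogenerator theorem; the paper's GAFT argument is self-contained given the filter-monad description of $\OPCONTLAT$, needing nothing beyond free algebras, image factorisation in a monadic category, and a cardinality bound.
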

\begin{proof}
One easily verifies that $\Bimorph(X\times Y,-)$ preserves limits. We check the solution set condition of Freyd's Adjoint Functor Theorem (in the form of \citep[Section V.3, Theorem 3]{MacLane_WorkMath}). Take $\calS$ as any representing set of $\{L\in \OPCONTLAT\mid |L|\le|F(X\times Y)|\}$, where $F(X\times Y)$ denotes the set of all filters on the set $X\times Y$. Let $Z$ be an op-continuous lattice and  $\varphi:X\times Y\to Z$ be a bimorphism. We have to find some $L\in\calS$, a bimorphism $\varphi':X\times Y\to L$ and a morphism $m:L\to Z$ in $\OPCONTLAT$ with $m\cdot \varphi'=\varphi$. Since the map $e:X\times Y\to F(X\times Y)$ sending $(x,y)$ to its principal filter gives actually the reflection of $X\times Y$ to $\OPCONTLAT$, there exists some $f:F(X\times Y)\to Z$ in $\OPCONTLAT$ with $f\cdot e=\varphi$.
\[
\xymatrix{ & F(X\times Y)\ar@{-->>}[dr]^q\ar[dd]^f\\
&& L\ar@{>-->}[dl]^m\\
X\times Y\ar[uur]^e\ar[r]_\varphi\ar@{..>}[urr]^{\varphi'=q\cdot e\quad} & Z}
\]
Let $f=m\cdot q$ a (regular epi,mono)-factorisation of $f$ in $\OPCONTLAT$. Then $\varphi':=q\cdot e$ is a bimorphism as it is the corestriction of $\varphi$ to $L$, $m:L\to Z$ lies in $\OPCONTLAT$ and $L$ can be chosen in $\calS$.
\end{proof}
By unicity of the representing object, $1\otimes X\simeq X\simeq X\otimes 1$ and $(X\otimes Y)\otimes Z\simeq X\otimes(Y\otimes Z)$. Furthermore, with the order $\geqslant$, $[0,\infty]$ is actually a monoid in $\OPCONTLAT$ since $+:[0,\infty]\times[0,\infty]\to[0,\infty]$ is a bimorphism and therefore it is a morphism $+:[0,\infty]\otimes[0,\infty]\to[0,\infty]$ in $\OPCONTLAT$, and so is $1\to [0,\infty],\,\star\mapsto 0$. We write
\[
 \OPCONTLAT^{[0,\infty]}
\]
for the category whose objects are op-continuous lattices $X$ equipped with a unitary and associative action $+:X\otimes[0,\infty]\to X$ in $\OPCONTLAT$, and whose morphisms are those $\OPCONTLAT$-morphisms $f:X\to Y$ which satisfy $f(x+u)=f(x)+u$, for all $x\in X$ and $u\in[0,\infty]$.

Summing up, 
\begin{theorem}\label{thm:CharAbsCocompII}
$\COCTSAP$ is equivalent to $\OPCONTLAT^{[0,\infty]}$.
\end{theorem}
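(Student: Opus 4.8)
The plan is to set up mutually inverse functors between $\COCTSAP$ and $\OPCONTLAT^{[0,\infty]}$. In one direction I would send an absolutely cocomplete approach T$_0$ space $X$ to its underlying ordered set equipped with the tensor $+:X\times[0,\infty]\to X$; in the other, I would send an object $(X,+)$ of $\OPCONTLAT^{[0,\infty]}$ to the approach space $KX$ built from the metric compact Hausdorff space $(X,d,\alpha)$ with $d(x,y)=\inf\{u\mid x+u\le y\}$ and $\alpha(\frx)=\bigwedge_{A\in\frx}\bigvee_{x\in A}x$. Most of the object-level content is already available: I would first use the characterisation theorem above, together with its subsequent remark, to restate absolute cocompleteness of a T$_0$ approach space $X$ as the conjunction that $X_0$ is a separated cocomplete metric space, that $X_p$ is an op-continuous lattice, and that the tensor $+$ preserves down-directed infima in each variable.

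For the object correspondence I would then glue this with Theorem \ref{Thm:TensMetVsAction}, which presents separated cocomplete metric spaces as complete lattices carrying a unitary, associative action $+$ preserving suprema in each variable. Feeding in op-continuity of the lattice and preservation of down-directed infima by $+$ promotes $+$ to a bimorphism in $\OPCONTLAT$, i.e.\ a morphism $+:X\otimes[0,\infty]\to X$ in $\OPCONTLAT$, so the underlying structure is exactly an object of $\OPCONTLAT^{[0,\infty]}$. Conversely, such an action satisfies all of the action axioms (unit, associativity, monotonicity, preservation of suprema in the second variable being condition (5), etc.), so that Proposition \ref{Prop:Cocompl+OPcont} supplies the metric compact Hausdorff structure and condition (iv) of the characterisation theorem is met; the approach structure is recovered as $a(\frx,x)=d(\alpha(\frx),x)$.

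For morphisms I would show that a map $f$ between the underlying lattices is a morphism in $\OPCONTLAT^{[0,\infty]}$ if and only if it is a left adjoint (i.e.\ colimit-preserving) approach map. A left adjoint approach map preserves tensors, hence $f(x+u)=f(x)+u$, and preserves order-theoretic suprema; being an approach map it is continuous $X_p\to Y_p$, and since $X_p,Y_p$ are injective T$_0$ spaces, continuity in our underlying-order convention forces preservation of down-directed infima, so $f$ is an $\OPCONTLAT$-morphism compatible with the action. Conversely, given an $\OPCONTLAT^{[0,\infty]}$-morphism $f$, preservation of down-directed infima yields continuity $X_p\to Y_p$, while $f(x+u)=f(x)+u$ and monotonicity make $f:X_0\to Y_0$ a metric map by Proposition \ref{Prop:ContrTensMet}; Proposition \ref{Prop:APcontrVsMetContr+Cont} then gives that $f$ is an approach map, and preserving tensors and suprema it preserves all colimits, hence is a left adjoint.

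The step I expect to be the crux is checking that an $\OPCONTLAT^{[0,\infty]}$-morphism automatically commutes with the ultrafilter convergence, $f\cdot\alpha=\beta\cdot Uf$, since this is what pushes the colimit formula \eqref{Eq:FormWeightSupAp} through $f$. Here I would exploit that $\alpha(\frx)=\bigwedge_{A\in\frx}\bigvee_{x\in A}x$ is a down-directed infimum of suprema, the family $(\bigvee_{x\in A}x)_{A\in\frx}$ being down-directed because $\frx$ is a filter; as $f$ preserves both suprema and down-directed infima we get $f(\alpha(\frx))=\bigwedge_{A\in\frx}\bigvee_{x\in A}f(x)$, and a short cofinality argument comparing the sets $f(A)$ for $A\in\frx$ with the members of $Uf(\frx)$ identifies this with $\bigwedge_{B\in Uf(\frx)}\bigvee_{y\in B}y=\beta(Uf(\frx))$. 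With the object bijection and the morphism bijection in hand, functoriality and naturality are routine, and the fact that the two constructions are mutually inverse up to isomorphism is precisely the recovery of the metric from the action (Theorem \ref{Thm:TensMetVsAction}) and of the convergence from the lattice (Proposition \ref{Prop:Cocompl+OPcont}), whence the claimed equivalence.
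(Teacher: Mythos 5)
Your proposal is correct and takes essentially the same route as the paper: the object correspondence is exactly the paper's assembly of the final characterisation theorem (condition (iv)), Theorem \ref{Thm:TensMetVsAction}, Proposition \ref{Prop:Cocompl+OPcont} and the tensor-product proposition for $\OPCONTLAT$, with the same pair of mutually inverse constructions. Since the paper's own proof amounts to the phrase ``Summing up'', your explicit treatment of the morphism correspondence---in particular the cofinality argument establishing $f\cdot\alpha=\beta\cdot Uf$ for $\OPCONTLAT^{[0,\infty]}$-morphisms, which is what lets the colimit formula \eqref{Eq:FormWeightSupAp} pass through $f$---is sound and supplies detail the paper leaves implicit.
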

Here an absolutely cocomplete T$_0$ approach space $X=(X,a)$ is sent to its underlying ordered set where $x\le y\iff a(\doo{x},y)=0$ ($x,y\in X$) equipped with the tensor product of $X$, and an op-continuous lattice $X$ with action $+$ is sent to the approach space induced by the metric compact Hausdorff space $(X,d,\alpha)$ where $d(x,y)=\inf\{u\in[0,\infty]\mid x+u\le y\}$ and $\displaystyle{\alpha(\frx)=\bigwedge_{A\in\frx}\bigvee_{x\in A}x}$, for all $x,y\in X$ and $\frx\in UX$.

\begin{remark}
By the theorem above, the diagram
\[
\xymatrix{\COCTSAP\simeq\OPCONTLAT^{[0,\infty]}\ar[dd]_\dashv\ar[rr]^-{\perp} & &
 \OPCONTLAT\ar[ddll]^\rightthreetimes\ar@{.>}@/_4ex/[ll]_{-\otimes[0,\infty]}\\ \\
\SET\ar@{.>}@/^4ex/[uu]^P\ar@{.>}@/_4ex/[uurr]_F}
\]
of right adjoints commutes, and therefore the diagram of the (dotted) left adjoints does so too. Here $FX$ is the set of all filters on the set $X$, ordered by $\supseteq$, and $PX=[0,\infty]^{UX}$ where $UX$ is equipped with the Zariski topology. In other words, $PX\simeq FX\otimes[0,\infty]$, for every set $X$.
\end{remark}

\def\cprime{$'$}


\end{document}